\newcommand{\figref}[1]{{Figure~\ref{#1}}}
\newtheorem{theorem}{Theorem}[section]
\newtheorem{Assumption}{Assumption}[section]
\newtheorem{lemma}{Lemma}[section]
\newtheorem{remark}{Remark}[section]
\newcommand{\thmref}[1]{{Theorem~\ref{#1}}}
\newcommand{\lemref}[1]{{Lemma~\ref{#1}}}
\newcommand{\secref}[1]{{Section~\ref{#1}}}
\newcommand{\assref}[1]{{Assumption~\ref{#1}}}
\newcommand{\rmref}[1]{{Remark~\ref{#1}}}
\begin{document}

\title[Strong convergence of the semi-tamed  and  tamed Euler schemes]
{Strong convergence of the  semi-tamed  and  tamed Euler schemes  for stochastic differential
 equations with jumps under non-global Lipschitz condition}

\author[ A. Tambue]{Antoine Tambue}
\address{
Department of Computing Mathematics and Physics,  Western Norway University of Applied Sciences, Inndalsveien 28, 5063 Bergen.
}

\address{
Center for Research in Computational and Applied Mechanics (CERECAM), and Department of Mathematics
and Applied Mathematics, University of Cape Town, 7701 Rondebosch, South Africa.
}
\address{
  The African Institute for Mathematical Sciences(AIMS) of South Africa,
6-8 Melrose Road, Muizenberg 7945, South Africa.
}
\email{antoine.tambue@hvl.no, antonio@aims.ac.za}

\author[ J. D. Mukam ]{Jean Daniel Mukam}
\address{
  African Institute for Mathematical Sciences(AIMS) of Senegal, Km 2, Route de Joal, B.P. 1418, Mbour, Senegal.
}
\address{
Technische Universit\"{a}t Chemnitz, 09126 Chemnitz, Germany.
}
\email{jean.d.mukam@aims-senegal.org}






\abstract{
We consider the explicit numerical approximations of stochastic differential equations (SDEs) driven by Brownian process and Poisson jump.
 It is well known that under non-global Lipschitz condition, Euler Explicit method fails to converge strongly to the exact solution of such SDEs without jumps, 
 while implicit Euler method converges but requires much computational efforts. 
 We investigate the strong convergence, the linear and nonlinear exponential stabilities
 of   tamed Euler  and semi-tamed methods for stochastic differential equation driven by Brownian process and Poisson jumps, both in compensated and non compensated forms. 
 We prove that under non-global Lipschitz condition  and superlinearly growing  drift term,
 these schemes converge strongly with the standard one-half order. 
 Numerical simulations to substain the theoretical results are provided. }

\keywords{Stochastic differential equation, Strong convergence, Linear Stability, Exponential Stability, Jump processes, one-sided Lipschitz.}

\maketitle
\section{Introduction}
\label{intro}
In this work, we consider jump-diffusion It\^{o}'s stochastic differential equations (SDEs) of the form  in  the  interval $[0, T]$
\begin{eqnarray}
\label{model}
 dX(t)=  f(X(t^{-}))dt +g(X(t^{-}))dW(t)+h(X(t^{-}))dN(t), \quad  X(0)=X_0.
\end{eqnarray}
Here $W(t)$ is a $m$-dimensional Brownian motion, $f :\mathbb{R}^d\longrightarrow\mathbb{R}^d$, $d \in \mathbb{N}$ 
satisfies the  one-sided Lipschitz condition and the polynomial growth condition,
the  functions $g : \mathbb{R}^d \longrightarrow\mathbb{R}^{d\times m}$ and $h :\mathbb{R}^d \longrightarrow\mathbb{R}^d$ satisfy
the globally Lipschitz, and $N(t)$ is a one dimensional Poisson process with parameter $\lambda$. Extension to vector-valued jumps with independent entries is straightforward.
The one-sided Lipschitz function $f$ can be decomposed as $f=u+v$, where the function $u : \mathbb{R}^d\longrightarrow\mathbb{R}^d$ 
is the global Lipschitz continuous part and $v : \mathbb{R}^d\longrightarrow\mathbb{R}^d$ is the non-global Lipschitz continuous part, see e.g. \cite{semitamed}. 
Using this decomposition, we can rewrite the jump-diffusion SDEs \eqref{model} in the following equivalent form
\begin{eqnarray}
\label{model1}
X(t)=  \left(u(X(t^{-})+ v(X(t^{-}))\right)dt +g(X(t^{-}))dW(t)+h(X(t^{-}))dN(t).
\end{eqnarray}
This decomposition will be used only for semi-tamed schemes.
Equations of  type \eqref{model} arise in a range of scientific, engineering and financial applications \cite{appf2,appf,Fima}.
Most of such equations do not have explicit solutions and therefore one requires numerical schemes for  their
approximations. Their numerical analysis has  been studied in \cite{Desmond2,Wang,desmond,platen} with implicit and explicit schemes where strong and weak convergence have been investigated. 
The implementation of implicit schemes requires significantly more computational effort than 
the explicit Euler-type approximations as Newton method is usually required to solve nonlinear systems at each time iteration  in implicit schemes.
The standard explicit method for approximating SDEs of  type \eqref{model} is  the Euler-Maruyama method \cite{platen}. 
Recently it has been proved (see \cite{Armulfdiv,lamport94}) that the Euler-Maruyama method often fails to converge strongly 
to the exact solution of nonlinear SDEs of the form \eqref{model} without jump  term  when at least one of the  functions $f$ and $g$
 grows superlinearly. To  overcome  this drawback of the Euler-Maruyama method, numerical approximation, 
with computational cost  close to that of the Euler-Maruyama method and which converges strongly even in the case  the function $f$  is superlinearly growing was first introduced in
\cite{Armulf} and strong convergence was investigated. Further investigations have been  performed  in  the litterature (see for example \cite{tamed2,tamed3,semitamed}  and references therein),
where in \cite{tamed2} the time step $\Delta t$ in \cite{Armulf} is replaced by its power  $\Delta t^{\alpha}, \,\, \alpha \in (0,1/2]$ in the denominator of the taming  drift term. 
Recently  the work in \cite{tamed2} has been extended for SDEs driven by compensated Levy noise in \cite{Kumar,tamedmilsteinjump}.
The condition $ \alpha \in (0,1/2]$ is key  in  the convergence proofs in \cite{Kumar,tamed2,tamedmilsteinjump}, so the proofs cannot be  extended for $ \alpha \in [1/2,1]$.
Strong and weak convergences are not the only features of numerical techniques. Stability is also a good feature as 
the information about time step size for which  does a particular numerical method replicate the stability properties of the exact solution is valuable. 
 The linear stability is an extension of  the deterministic A-stability
 while exponential stability can guarantee that errors introduced in one time step will decay exponentially in future time steps,  exponential
 stability  also implies asymptotic stability \cite{Huang}.  By the Chebyshev inequality and the Borel--Cantelli lemma,
 it is well known that exponential mean-square stability implies almost sure stability \cite{Huang}. 
 The stability of  classical implicit and explicit methods for  \eqref{model} are well understood \cite{Desmond2,Huang,Wang}. 
 Although the strong convergence  of tamed schemes with and without jump have been studied, a rigorous stability properties have not yet been investigated to the best of our knowledge.
 

The aim of this paper is to study the strong convergence  of tamed schemes  driven by Brownian process and Poisson jump for  $\alpha\in[1/2,1]$,  and to  
 provide a rigorous study of the linear and  exponential stabilities of semi-tamed and tamed  schemes  for  $\alpha\in[0,1]$.
Following  closely the breakthrough idea  in \cite{Armulf}, 
 we  provide  the strong convergence of the tamed schemes and the corresponding semi tamed  schemes   both in compensated and non compensated forms  for  $\alpha\in[1/2,1]$.
 The extensions are not straightforward as several technical lemmas are needed.  Numerical experiments  
 show that the semi-tamed works better than the tamed and compensated tamed schemes. Numerical results also show that the  tamed and the compensated tamed Euler scheme 
 have good stability behavior when $\alpha$ approaches $1$. 
 Therefore, our tamed schemes  with  $\alpha\in[1/2,1]$  have  better   stability property than the tamed schemes presented in \cite{Kumar} for $\alpha\in(0,1/2]$.


The  paper is organized as follows. \secref{setting} presents 
the classical result of existence and uniqueness of the solution $X$ of \eqref{model}. The  compensated and non compensated tamed schemes and semi-tamed scheme are presented in \secref{schemes}
along with their strong convergences.  The linear stability of the schemes is provided in \secref{linearsta} while the  nonlinear exponential stability is provided in \secref{nlinearsta}.
We end  in \secref{simulations} by providing some numerical simulations.

\section{Notations, assumptions and well posedness}
\label{setting}
Throughout this work,  $(\Omega, \mathcal{F}, \mathbb{P})$ denotes a complete probability space with a filtration $(\mathcal{F}_t)_{t\geq 0}$. 
 For all $x, y\in\mathbb{R}^d$, 
we denote by $ \langle x, y \rangle= x_1y_1+x_2y_2+\cdots+x_dy_d$, $\|x\|= \langle x, x \rangle^{1/2}$, $\|A\|= \underset {x\in\mathbb{R}^d, \Vert x \Vert \leq 1}{\sup } \Vert Ax\Vert$ for all $A\in\mathbb{R}^{m\times d}$. $a\vee b$ represents $\max\{a,b\}$.
We use also the following convention : $\sum_{i=u}^n=0$ for $u>n$.

We first ensure  that SDEs \eqref{model} is well-posed.  The following assumption is needed.
\begin{Assumption}
\label{ass1}
We assume  that:\\
$(A.1)$ For all $p>0$, there exists  $M_p>0$ such that $\mathbb{E}\|X_0\|^p\leq M_p$,  and  $f,g,h\in C^1 (\mathbb{R}^d)$.\\
$(A.2)$ The functions $g$, $h$ and $u$ satisfy the  following global Lipschitz condition
\begin{eqnarray*}
\Vert g(x)-g(y) \Vert \vee \Vert h(x)-h(y)\Vert \vee \Vert u(x)-u(y) \Vert  \leq C \Vert x-y \Vert \hspace{0.5cm} \forall\hspace{0.5cm} x,y\in \mathbb{R}^d.\\
\end{eqnarray*}
$(A.3)$ The  function $f$ satisfies  the following one-sided Lipschitz condition
\begin{eqnarray*}
\langle x-y, f(x)-f(y) \rangle\leq C\|x-y\|^2\hspace{0.5cm} \forall\hspace{0.5cm} x,y\in \mathbb{R}^d.
\end{eqnarray*}
$(A.4)$ The function $f$ satisfies the following superlinear growth condition 
\begin{eqnarray*}
\Vert f(x)-f(y) \Vert \leq C(1+ \Vert x \Vert^c+ \Vert y \Vert^c) \Vert x-y \Vert \hspace{0.5cm} \forall\hspace{0.5cm} x,y\in \mathbb{R}^d,
\end{eqnarray*}
where $C$ and $c$ are positive constants. 
\end{Assumption}
\begin{remark}
\label{Lips}
 Note that from \assref{ass1},  $u$ satisfies the global Lipchitz condition,  and $f$  satisfies the one-sided Lipschitz condition  and the superlinear growth condition, which  implies that 
 the function $v$ satisfies the one-sided Lipschitz condition $(A.3)$ and the  superlinear growth condition $(A.4)$ in  \assref{ass1}.
 \end{remark}
\begin{theorem}
 Under  the conditions $(A.1)$, $(A.2)$ and $(A.3)$  of \assref{ass1}, the SDE \eqref{model} has a unique solution with all bounded moments.
\end{theorem}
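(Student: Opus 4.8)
The plan is to combine the classical truncation (localization) technique with an a priori moment estimate obtained from It\^o's formula for jump processes; the one-sided Lipschitz condition $(A.3)$ is precisely what keeps the superlinearly growing drift from producing an explosion. \emph{Localization.} For each integer $R\ge1$ let $\pi_R(x)=x\,(R\wedge\|x\|)/\|x\|$ (with $\pi_R(0)=0$) be the $1$-Lipschitz radial projection onto the closed ball of radius $R$, and put $f_R:=f\circ\pi_R$. By $(A.1)$, $f\in C^1$ is Lipschitz on that compact ball, so $f_R$ is globally Lipschitz and bounded; together with $(A.2)$ the coefficients $(f_R,g,h)$ satisfy a global Lipschitz and linear growth condition, and the classical existence--uniqueness theory for SDEs driven by a Brownian motion and a Poisson process (Picard iteration in $L^2$) provides a unique c\`adl\`ag, $(\mathcal{F}_t)$-adapted solution $X^R$ of \eqref{model} with $f$ replaced by $f_R$, satisfying $\EE\sup_{0\le t\le T}\|X^R(t)\|^2<\infty$. \emph{Consistency.} Setting $\tau_R:=\inf\{t\ge0:\|X^R(t)\|\ge R\}$ and using that $f_R=f_{R'}$ on the ball of radius $R\le R'$, pathwise uniqueness gives $X^R=X^{R'}$ on $[0,\tau_R]$; hence $(\tau_R)_R$ is nondecreasing, and with $\tau_\infty:=\lim_{R\to\infty}\tau_R$ the process $X(t):=X^R(t)$ for $t\le\tau_R$ is well defined and solves \eqref{model} on $[0,\tau_\infty)$. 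It remains to show $\tau_\infty=\infty$ a.s.

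\emph{A priori moment bound.} Fix $p\ge1$ and apply It\^o's formula for jump diffusions to $\phi(x)=\|x\|^{2p}$ along $X^R$ stopped at $\tau_R$. On $\{t\le\tau_R\}$ the drift term equals $2p\|X^R\|^{2p-2}\langle X^R,f(X^R)\rangle$; taking $y=0$ in $(A.3)$ gives $\langle x,f(x)\rangle\le C\|x\|^2+\|x\|\,\|f(0)\|\le C'(1+\|x\|^2)$, so after Young's inequality this term is $\le C''(1+\|X^R\|^{2p})$, with $C''$ independent of $R$. The It\^o correction $\tfrac12\,\mathrm{tr}\big(g(X^R)^{\top}D^2\phi(X^R)g(X^R)\big)$ is bounded by $C_p\|X^R\|^{2p-2}\|g(X^R)\|^2\le C(1+\|X^R\|^{2p})$ via the linear growth of $g$ in $(A.2)$. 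The jump part splits into a compensated local martingale, which is a genuine martingale after stopping (its integrand being bounded on $\{t\le\tau_R\}$) and hence has zero mean, plus the finite-variation term $\lambda\int_0^{\cdot}\big(\|X^R+h(X^R)\|^{2p}-\|X^R\|^{2p}\big)\,ds$, whose integrand obeys $\|x+h(x)\|^{2p}\le C(1+\|x\|^{2p})$ by the linear growth of $h$. Collecting terms, taking expectations, and applying Gronwall's lemma yields $\sup_{0\le t\le T}\EE\|X^R(t\wedge\tau_R)\|^{2p}\le C_{p,T}(1+\EE\|X_0\|^{2p})$ with $C_{p,T}$ not depending on $R$.

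\emph{Conclusion.} By Markov's inequality, $\mathbb{P}(\tau_R\le T)\le R^{-2p}\EE\|X^R(\tau_R\wedge T)\|^{2p}\le C_{p,T}(1+\EE\|X_0\|^{2p})R^{-2p}\to0$ as $R\to\infty$, so $\mathbb{P}(\tau_\infty\le T)=0$ for every $T>0$; thus $\tau_\infty=\infty$ a.s.\ and $X$ is a global solution of \eqref{model}. Pathwise uniqueness follows from the same computation applied to $\|X-\tilde X\|^2$ for two solutions, using $(A.2)$ for $g,h$ and $(A.3)$ for $f$, followed by Gronwall. Letting $R\to\infty$ in the moment bound and using Fatou's lemma gives $\sup_{0\le t\le T}\EE\|X(t)\|^{2p}<\infty$ for all $p\ge1$ and $T>0$, which a Doob/Burkholder--Davis--Gundy estimate upgrades to $\EE\sup_{0\le t\le T}\|X(t)\|^{2p}<\infty$; hence $X$ has bounded moments of all orders.

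\emph{Main obstacle.} The delicate point is the a priori moment estimate: one must set up It\^o's formula for the jump-diffusion carefully, keeping track of the compensated versus non-compensated jump integral, and dominate the superlinearly growing drift using \emph{only} the one-sided Lipschitz bound $(A.3)$ --- $f$ itself enjoys neither a global Lipschitz nor a linear growth estimate --- whereas the $g$- and $h$-contributions are comparatively routine thanks to $(A.2)$.
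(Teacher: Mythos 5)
Your proof is correct, and it is worth noting that the paper does not actually prove this statement at all: its ``proof'' is a one-line citation to Lemma~1 of Higham--Kloeden (reference [Desmond2]). What you have written is a self-contained reconstruction of the standard argument that such references use: truncate the drift to get global-Lipschitz well-posedness, patch the truncated solutions together by pathwise uniqueness, and rule out explosion via a Lyapunov estimate for $\|x\|^{2p}$ in which the superlinear drift enters only through $\langle x, f(x)\rangle\le C'(1+\|x\|^2)$, which is exactly what $(A.3)$ with $y=0$ supplies. Your handling of the jump part (splitting into the compensated martingale, which is a true martingale after stopping, plus the $\lambda\,ds$ compensator controlled by the linear growth of $h$) is the right bookkeeping and is the only genuinely jump-specific ingredient. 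The one place where your write-up is loose is the final ``upgrade'' to $\mathbb{E}\sup_{t\le T}\|X(t)\|^{2p}<\infty$: this should be obtained by taking the supremum inside the It\^o identity for $\|x\|^{2p}$ \emph{before} taking expectations and then applying Burkholder--Davis--Gundy to the martingale terms, so that $f$ again only appears through $\langle x,f(x)\rangle$; one cannot instead bound $\mathbb{E}\bigl(\int_0^T\|f(X_s)\|\,ds\bigr)^{2p}$ directly, since under $(A.1)$--$(A.3)$ alone (without the polynomial growth $(A.4)$, which the theorem does not assume) $\|f\|$ need not be controlled by any power of $\|x\|$. With that reading, the argument is complete and proves the (slightly stronger) $\mathbb{E}\sup$-moment bound.
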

\begin{proof} 
 See \cite{Gyongy} for the existence and the uniqueness and \cite[Lemma 1]{Desmond2} for the boundedness of the moments of the solution.
\end{proof}
\section{Numerical Schemes and main results}
\label{schemes}
We consider  the  SDEs \eqref{model} in  the current non compensated  form. Applying the  tamed Euler scheme (as in \cite{Armulf}) in the drift term of \eqref{model} yields the following schemes
that we will call non compensated tamed scheme (NCTS)
\begin{equation}
 X_{n+1}^{M}=X_{n}^{M}+\dfrac{\Delta t f(X_{n}^{M})}{1+ \Delta t^{\alpha}\Vert f(X_{n}^{N}) \Vert }+g(X_{n}^{M}) \Delta W_n^M +h(X_{n}^{M})\Delta N_n^M,
 \label{ncts}
\end{equation}
where $\Delta t=T/M$ is the  time step-size, $M\in\mathbb{N}$ is the number of time subdivisions, $\alpha\in[1/2,1]$, $\Delta W_n^M = W(t_{n+1})- W(t_{n})$ and $\Delta N_n^M = N(t_{n+1})- N(t_{n})$.
Applying the semi-tamed Euler scheme (as in \cite{semitamed}) in the non globally Lipschitz part $v$ of  the drift term of \eqref{model1} yields the following scheme 
that we will call  semi-tamed scheme (STS)
\begin{eqnarray}
 Z_{n+1}^{M}=Z_{n}^{M}+u(Z^M_n)\Delta t+\dfrac{\Delta t v(Z_{n}^{M})}{1+ \Delta t^{\alpha} \Vert v(Z_{n}^{M}) \Vert }+g(Z_{n}^{M}) \Delta W_n +h(Z_{n}^{M})\Delta N_n^M.
 \label{sts}
\end{eqnarray}
Recall that the compensated poisson process $\overline{N}(t) := N (t) - \lambda t $ is a martingale and satisfies the  the following properties
\begin{eqnarray}
 \mathbb{E}\left(\overline{N}(t+s)-\overline{N}(t)\right)=0\,\qquad \qquad  \mathbb{E}\vert \overline{N}(t+s)-\overline{N}(t)\vert^2=\lambda s,\qquad s, t \geqslant 0.
\end{eqnarray}
We  can  easily check  that the quadratic variation of $\overline{N}(t)$  is   $[\overline{N},\overline{N}]_t=N(t)$.

We can therefore rewrite the jump-diffusion SDEs \eqref{model} in the following equivalent form
\begin{eqnarray}
\label{modeln}
 dX(t)=  f_\lambda(X(t^{-})dt +g(X(t^{-})dW(t)+h(X(t^{-})d\overline{N}(t),
\end{eqnarray}
where
 $f_\lambda(x)=f(x)+\lambda h(x)$.
Note that  as $f$, the  function $f_\lambda$ satisfies the one-sided Lipschitz condition $(A.3)$ and  the superlinear growth $(A.4)$.
 Applying the  tamed Euler scheme  in  the drift term of \eqref{modeln} as  in \cite{Armulf} yields the following  updated scheme for jump SDEs \eqref{model} that we will call
 compensated tamed scheme (CTS)
\begin{eqnarray}
\label{tamedjump}
 Y_{n+1}^{M}=Y_{n}^{M}+\dfrac{\Delta t f_\lambda(Y_{n}^{M})}{1+ \Delta t^{\alpha} \Vert f_{\lambda}(Y_{n}^{M}) \Vert }+g(Y_{n}^{M}) \Delta W_n^M +h(Y_{n}^{M})\Delta \overline{N}_n^M ,
\end{eqnarray}
where $\Delta \overline{N}_n^M = \overline{N}(t_{n+1})- \overline{N}(t_{n})$.  

Note that if the equivalent model \eqref{model1} is putting in  the compensated form, and the semi-tamed method  is applied on the non globally Lipschitz
part $v$ of  the drift term $f$, we will obtain the same scheme as in \eqref{sts}.

We define the continuous time interpolations of the discrete numerical approximations  
of \eqref{ncts}, \eqref{sts} and \eqref{tamedjump} respectively by

\begin{eqnarray}
\label{dncts}
\overline{X}^M_t =X^M_n+ \dfrac{(t-n\Delta t)f(X^M_n)}{1+\Delta t^{\alpha}\|f(X^M_n)\|}+g(X^M_n)(W_t-W_{n\Delta t})+ h(X^M_n)(N_t-N_{n\Delta t}),
\end{eqnarray}
\begin{eqnarray}
\label{dsts}
\overline{Z}^M_t =Z^M_n+(t-n\Delta t) \left(u(Z^M_n)+\dfrac{v(Z^M_n)}{1+\Delta t^{\alpha}\|v(Z^M_n)\|}\right)+g(Z^M_n)(W_t-W_{n\Delta t})\nonumber\\
+ h(Z^M_n)(N_t-N_{n\Delta t}),
\end{eqnarray}
and 
\begin{eqnarray}
\label{dtamedjump}
\overline{Y}^M_t =Y^M_n+\dfrac{(t-n\Delta t)f_{\lambda}(Y^M_n)}{1+\Delta t^{\alpha}\|f_{\lambda}(Y^M_n)\|}+g(Y^M_n)(W_t-W_{n\Delta t})+ h(Y^M_n)(\overline{N}_t-\overline{N}_{n\Delta t}),
\end{eqnarray}
for all $t\in[n\Delta t, (n+1)\Delta t), \hspace{0.3cm} n\in\{0,\cdots, M-1\}$.

The main result of this section is given  in the following theorem.
\begin{theorem} 
\label{ch4theorem1}
[\text{Main result} ]\\
Let $X_t$ be the exact solution of \eqref{model} and  $\chi^M_t$ the discrete continuous  form of the numerical approximations given by \eqref{dncts},\eqref{dsts} and \eqref{dtamedjump}
($\chi^M_t= \overline{X}^M_t$ for scheme NCTS, $\chi^M_t= \overline{Z}^M_t$ for  scheme STS and $\chi^M_t= \overline{Y}^M_t$  for scheme CTS).
Under \assref{ass1}, for all $p\in[1,+\infty)$  there exists a  constant $C_p> 0$ independent of $\Delta t$ such that
 \begin{eqnarray}
 \left(\mathbb{E}\left[\sup_{t\in[0,T]}\|X_t-\chi^M_t\|^p\right]\right)^{1/p}\leq C_p\Delta t^{1/2}, \qquad \,\, \Delta t=T/M.
 \end{eqnarray}
\end{theorem}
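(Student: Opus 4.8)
The plan is to follow the now-standard tamed-Euler strategy adapted to the jump setting, treating the three schemes (NCTS, STS, CTS) in a unified way since the semi-tamed and compensated variants differ only in bookkeeping. First I would establish a priori moment bounds: for every $p\ge 1$ there is $C_p>0$ with $\sup_{M}\sup_{0\le n\le M}\mathbb{E}\|\chi^M_{t_n}\|^p\le C_p$ and, more importantly, exponential-type integrability of the tamed increments, i.e.\ bounds of the form $\mathbb{E}\big[\exp(c\,\mathbf{1}_{\Omega_k}\|\chi^M_{t_k}\|)\big]$ or the bounded-moments-of-the-dominating-process estimates used in \cite{lamport194,semitamed}. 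The taming denominator gives the pointwise bound $\|\Delta t\, f_\lambda(Y^M_n)\|/(1+\Delta t\|f_\lambda(Y^M_n)\|)\le \min(\Delta t\|f_\lambda(Y^M_n)\|,1)\le \sqrt{\Delta t}$, which is exactly what prevents the moment blow-up that kills plain Euler--Maruyama; the Poisson increments $\Delta N^M_n$ (or compensated $\Delta\overline N^M_n$) contribute only Lipschitz terms since $h$ is globally Lipschitz, and their moments are handled by the martingale property of $\overline N$ together with $\mathbb{E}|\Delta\overline N^M_n|^{2q}\le C_q\Delta t$ for the compensated form and the classical moment formulas for $\Delta N^M_n$ in the non-compensated form.

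Next I would control the one-step and interpolation errors. Writing $e^M_t=X_t-\chi^M_t$, I apply It\^o's formula for jump processes to $\|e^M_t\|^p$ (or first to $\|e^M_t\|^2$ and then bootstrap to general $p$ via the moment bounds), and split the drift difference as $\langle e^M_t, f_\lambda(X_{t^-})-f_\lambda(\chi^M_{t^-})\rangle$ plus the \emph{taming defect} $\langle e^M_t, f_\lambda(\chi^M_{t^-}) - f_\lambda(\chi^M_{t^-})/(1+\Delta t\|f_\lambda(\chi^M_{t^-})\|)\rangle$. The first term is handled by the one-sided Lipschitz condition $(A.3)$ (for $f_\lambda$, which inherits it as noted after \eqref{modeln}), giving $\le C\|e^M_t\|^2$. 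The taming defect equals $\Delta t\|f_\lambda(\chi^M_{t^-})\|\,\langle e^M_t, f_\lambda(\chi^M_{t^-})\rangle/(1+\Delta t\|f_\lambda(\chi^M_{t^-})\|)$ and is bounded, using the superlinear growth $(A.4)$ and the moment bounds, by $C\Delta t(1+\|\chi^M_{t^-}\|^{2c+2}) \le$ something whose expectation is $O(\Delta t)$; one also needs $\|\chi^M_{t^-}-\chi^M_{t_n}\|$ terms, which are $O(\sqrt{\Delta t})$ in $L^p$ by the previous paragraph. The diffusion and jump contributions are treated by Burkholder--Davis--Gundy together with the global Lipschitz property of $g$ and $h$, and produce terms $\le C\int_0^t\mathbb{E}\|e^M_s\|^p ds + C\Delta t^{p/2}$, where the $\Delta t^{p/2}$ comes from the H\"older-continuity-in-time estimate $\mathbb{E}\|\chi^M_t-\chi^M_{t_n}\|^p\le C\Delta t^{p/2}$.

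Assembling these, I get a Gronwall-type inequality
\[
\mathbb{E}\Big[\sup_{s\le t}\|e^M_s\|^p\Big] \le C\int_0^t \mathbb{E}\Big[\sup_{r\le s}\|e^M_r\|^p\Big]\,ds + C_p\,\Delta t^{p/2},
\]
from which Gronwall's lemma yields $\mathbb{E}[\sup_{t\le T}\|e^M_t\|^p]\le C_p\Delta t^{p/2}$, i.e.\ the claimed rate $C_p\Delta t^{1/2}$ after taking $p$-th roots. The supremum inside the expectation is obtained by keeping the BDG bounds on the running maximum of the martingale parts throughout rather than only at endpoints. The main obstacle I anticipate is the moment/integrability control of the tamed numerical solution in the jump setting: one must show that the extra jump term $h(\chi^M_n)\Delta N^M_n$ (which is \emph{not} tamed, but only globally Lipschitz) does not destroy the uniform-in-$M$ moment bounds, and in the non-compensated form one must carefully absorb the drift contribution $\lambda h$ that is implicitly present — this is where the decomposition $f_\lambda = f+\lambda h$ and the fact that $f_\lambda$ still satisfies $(A.3)$–$(A.4)$ is used. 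A secondary technical point is justifying the application of It\^o's formula with jumps to $\|e^M_t\|^p$ and handling the jump part of $e^M_t$, whose compensator involves $h(X_{t^-})-h(\chi^M_{t^-})$ and is again controlled by the Lipschitz bound on $h$; the several "technical lemmas" alluded to in the introduction are presumably exactly these moment and time-regularity estimates.
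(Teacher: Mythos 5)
Your proposal follows essentially the same route as the paper: uniform moments for the tamed scheme via the Hutzenthaler--Jentzen--Kloeden dominating-process/good-event machinery extended to Poisson increments, $O(\Delta t^{1/2})$ time regularity of the continuous interpolant, and then It\^{o}'s formula applied to $\|X_s-\chi^M_s\|^2$ combined with the one-sided Lipschitz condition, a separately estimated taming-defect term, Burkholder--Davis--Gundy for the Brownian and compensated-Poisson martingale parts, and Gronwall, with the STS and NCTS cases reduced to the compensated one through $f_\lambda=f+\lambda h$. One caveat: the pointwise inequality $\min(\Delta t\|f_\lambda\|,1)\le\sqrt{\Delta t}$ you state is false (take $\Delta t\|f_\lambda\|\ge 1$), and the uniform moment bound does not follow from such a per-step estimate but, as in the paper, from combining the crude bound $\|Y^M_n\|_{L^{2p}}\le C(1+M)$ on the bad event (whose probability decays faster than any power of $M$) with the dominating process $D^M_n$ on the good event, where the exponential (not merely polynomial) integrability of $\sum_{k}|\Delta\overline{N}^M_k|$, obtained from the Poisson moment generating function, is the genuinely new ingredient in the jump setting.
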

\subsection{ Proof of \thmref{ch4theorem1} for $\chi^M_t= \overline{Y}^M_t$}
\label{tamedjs}
Before giving the proof  of \thmref{ch4theorem1}, some preparatory results are needed. Here we consider the  compensated tamed scheme (CTS) given by \eqref{tamedjump}.
 \subsubsection{Preparatory results}
 Throughout  this work the following notations will be used with slight modification  in the next section
 {\small {
\begin{eqnarray} 
\label{notation}
  \left\lbrace \begin{array}{l} 
 \alpha^M_k := \mathbbm{1}_{\{\|Y^M_k\|\geq 1\}}\left\langle\dfrac{Y^M_k}{\|Y^M_k\|}, \dfrac{g(Y^M_k)}{\|Y^M_k\|}\Delta W^M_k\right\rangle,\quad k=0,\cdots, M\\
 \beta^M_k := \mathbbm{1}_{\{ \|Y^M_k||\geq 1\}}\left\langle\dfrac{Y^M_k}{ \|Y^M_k\|}, \dfrac{h(Y^M_k)}{||Y^M_k\|}\Delta\overline{N}^M_k\right\rangle,\quad k=0,\cdots, M\\
 \beta :=\left(1+K+2C+KTC+TC+T\|f_{\lambda}(0)\|+\|g(0)\|+\|h(0)\|\right)^4,\\
 D^M_n := \left(\beta+\|X_0\| \right)\exp\left(\dfrac{3\beta}{2}+ \underset{m\in\{0,\cdots,n\}}{\sup } \underset {k=m}{\sum ^{n-1}} \left[\dfrac{3\beta}{2}\|\Delta W^M_k\|^2+\dfrac{3\beta}{2}|\Delta\overline{N}^M_k|+\alpha^M_k+\beta^M_k\right]\right),\\
 \Omega^M_n  :=\Big \{\omega\in \Omega : \underset{k\in\{0,1,\cdots, n-1\}} {\sup} D^M_k(\omega)\leq M^{1/2c}, \underset{k\in\{0,1,\cdots, n-1\}} {\sup} \Big(\|\Delta W^M_k(\omega)\|\vee|\Delta \overline{N}^M_k(\omega)|\Big)\leq 1 \Big\}.
 \end{array} \right.
 \end{eqnarray}
 }}
  The  aims  of this section is to update  all lemmas used in \cite{Armulf} and provide  new lemmas  for Poisson jump. 
 \begin{lemma}
 \label{ch4lemma1}
 For all  $a,b\geq 0$, the following inequality holds 
 \begin{eqnarray*}
 1+a+b^2\leq e^{a+\sqrt{2}b}.
 \end{eqnarray*}
 \end{lemma}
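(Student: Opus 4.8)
The plan is to reduce the claim to the elementary scalar inequality $e^{x}\geq 1+x+\tfrac{x^{2}}{2}$, which holds for every $x\geq 0$. To justify that auxiliary inequality I would set $\phi(x):=e^{x}-1-x-\tfrac{x^{2}}{2}$ and note $\phi(0)=0$, $\phi'(x)=e^{x}-1-x$, and $\phi''(x)=e^{x}-1\geq 0$ for $x\geq 0$; hence $\phi'$ is nondecreasing on $[0,\infty)$ with $\phi'(0)=0$, so $\phi'\geq 0$ there, and therefore $\phi$ is nondecreasing with $\phi(0)=0$, giving $\phi(x)\geq 0$ for all $x\geq 0$. (Equivalently one may simply invoke Taylor's formula with Lagrange remainder.)

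Next I would apply this with $x=a+\sqrt{2}\,b$, which is nonnegative since $a,b>0$, to obtain
\begin{eqnarray*}
e^{a+\sqrt{2}b}\;\geq\; 1+\bigl(a+\sqrt{2}b\bigr)+\frac{\bigl(a+\sqrt{2}b\bigr)^{2}}{2}
\;=\;1+a+\sqrt{2}\,b+\frac{a^{2}}{2}+\sqrt{2}\,ab+b^{2}.
\end{eqnarray*}
Since $a,b>0$, each of the terms $\sqrt{2}\,b$, $\tfrac{a^{2}}{2}$ and $\sqrt{2}\,ab$ is nonnegative, so the right-hand side is at least $1+a+b^{2}$, which is exactly the desired bound. (An alternative route with the same flavour is to write $e^{a+\sqrt 2 b}=e^{a}e^{\sqrt 2 b}\geq(1+a)(1+\sqrt 2 b+b^{2})$ using $e^{a}\geq 1+a$ and $e^{\sqrt 2 b}\geq 1+\sqrt 2 b+\tfrac{(\sqrt 2 b)^2}{2}$, then expand and discard the nonnegative cross terms $a\sqrt 2 b$, $ab^{2}$, $\sqrt 2 b$.)

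There is essentially no serious obstacle here: the only point requiring a line of care is the verification of $e^{x}\geq 1+x+x^{2}/2$ on $[0,\infty)$, and after that the result follows purely by expanding a square and throwing away manifestly nonnegative terms. I would keep the write-up to these few lines.
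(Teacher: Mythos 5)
Your proof is correct, but it takes a different route from the paper's. The paper fixes $a\geq 0$ and studies $l(b)=e^{a+\sqrt{2}b}-1-a-b^{2}$ as a function of $b$ alone: it computes $l''(b)=2(e^{a+\sqrt{2}b}-1)\geq 0$, deduces $l'(b)\geq l'(0)=\sqrt{2}e^{a}>0$, hence $l(b)\geq l(0)=e^{a}-1-a\geq 0$, the last step using $1+a\leq e^{a}$. You instead reduce everything to the single standard inequality $e^{x}\geq 1+x+\tfrac{x^{2}}{2}$ on $[0,\infty)$, evaluate it at $x=a+\sqrt{2}b$, expand $\tfrac{(a+\sqrt{2}b)^{2}}{2}=\tfrac{a^{2}}{2}+\sqrt{2}ab+b^{2}$, and discard the nonnegative leftover terms $\sqrt{2}b+\tfrac{a^{2}}{2}+\sqrt{2}ab$. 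Both arguments are elementary and of comparable length (your auxiliary inequality is itself verified by the same second-derivative monotonicity device the paper uses, so the calculus content is the same), but your version has the advantage of making the constant $\sqrt{2}$ transparent: it is exactly what is needed so that the quadratic Taylor term $\tfrac{(\sqrt{2}b)^{2}}{2}$ produces the required $b^{2}$. The paper's version keeps the two variables entangled and obscures this point, though it avoids expanding the square. Your factored alternative $e^{a}e^{\sqrt{2}b}\geq(1+a)(1+\sqrt{2}b+b^{2})$ is also valid and perhaps the shortest of all. Either of your write-ups would serve as a correct replacement for the paper's proof.
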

 
\begin{proof}
  For $a\geq 0$ fixed, let us define the function $l(b)=e^{a+\sqrt{2}b}-1-a-b^2$. It can be easily checked that $l'(b)=\sqrt{2}e^{a+\sqrt{2}b}-2b$ and
  $l''(b)=2(e^{a+\sqrt{2}b}-1)$. Since $a$ and $b$ are positive, it follows that $l''(b)\geq 0$ for all $b\geq 0$. So $l'$ is a non-decreasing function. 
  Therefore, $l'(b)\geq l'(0)=\sqrt{2}e^a>0$ for all $b\geq 0$. This implies that $l$ is a non-decreasing function. Hence $l(b)\geq l(0)=e^a-1-a$ 
  for all $b\geq 0$. Since $1+a\leq e^a$ for all positive number $a$, it follows that $l(b)\geq 0$ for all positive number $b$, so 
  $1+a+b^2\leq e^{a+\sqrt{2}b}$, $\forall\;b\geq0$. Therefore for all $a\geq 0$ fixed, $1+a+b^2\leq e^{a+\sqrt{2}b}$, $\forall\;b\geq0$.
 \end{proof}
 
 Following closely  \cite[Lemma 3.1]{Armulf}, we have  the following main lemma.
 \begin{lemma}\label{ch4lemma2}
 The following inequality holds for all $M\in \mathbb{N}$ 
 and all $n\in\{0,1,\cdots, M\}$
 \begin{eqnarray}
 \mathbbm{1}_{\Omega^M_n}\|Y^M_n\|\leq D^M_n,
 \label{ch4Denobound}
 \end{eqnarray}
 where $D_n^M$ and $\Omega^M_n$ are given in \eqref{notation}.
 \end{lemma}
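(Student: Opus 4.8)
The plan is to prove the bound $\mathbbm{1}_{\Omega^M_n}\|Y^M_n\|\leq D^M_n$ by induction on $n$, following the strategy of \cite[Lemma 3.1]{lamport194} but carrying along the extra Poisson jump terms. For $n=0$ the claim reduces to $\|X_0\|\leq D^M_0=(\beta+\|X_0\|)e^{3\beta/2}$, which holds since $\beta>0$. For the inductive step, I would start from the recursion \eqref{tamedjump} and estimate $\|Y^M_{n+1}\|^2=\|Y^M_n\|^2+2\langle Y^M_n, \Delta Y^M_n\rangle+\|\Delta Y^M_n\|^2$, where $\Delta Y^M_n$ is the increment. The key structural observation, exactly as in the tamed-Euler analysis, is that the tamed drift satisfies $\|\Delta t f_\lambda(Y^M_n)/(1+\Delta t\|f_\lambda(Y^M_n)\|)\|\leq \min(\Delta t\|f_\lambda(Y^M_n)\|,1)\le 1$, so the drift contribution to the increment is bounded by $1$ regardless of how large $f_\lambda$ is; on the event $\{\|Y^M_n\|\ge 1\}$ one also uses the one-sided Lipschitz condition $(A.3)$ together with the linear growth of $u=f_\lambda-\lambda h - v$-type terms to control $\langle Y^M_n, f_\lambda(Y^M_n)\rangle$ by $C(1+\|Y^M_n\|^2)$ (after absorbing $f_\lambda(0)$, $h(0)$ into $\beta$).

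Next I would split the analysis into the two cases $\|Y^M_n\|<1$ and $\|Y^M_n\|\ge1$. When $\|Y^M_n\|<1$, every term in $\Delta Y^M_n$ is bounded by a constant times $(1+\|\Delta W^M_n\|+|\Delta\overline N^M_n|)$ using $(A.2)$ and the uniform bound on the tamed drift, and on $\Omega^M_{n+1}$ the increments $\|\Delta W^M_n\|$, $|\Delta\overline N^M_n|$ are $\le1$, so $\|Y^M_{n+1}\|$ is bounded by an absolute constant $\le\beta$, hence $\le D^M_{n+1}$. When $\|Y^M_n\|\ge1$, I would write
\begin{eqnarray*}
\|Y^M_{n+1}\|^2 \le \|Y^M_n\|^2\Big(1+ 3\beta\|\Delta W^M_n\|^2+3\beta|\Delta\overline N^M_n| + 2\alpha^M_n+2\beta^M_n+3\beta\Delta t\Big) + (\text{lower order}),
\end{eqnarray*}
where $\alpha^M_n,\beta^M_n$ are precisely the noise-linear terms defined in \eqref{notation} (the $\alpha^M_n$ coming from the cross term $2\langle Y^M_n, g(Y^M_n)\Delta W^M_n\rangle$ and $\beta^M_n$ from $2\langle Y^M_n, h(Y^M_n)\Delta\overline N^M_n\rangle$, both after dividing out $\|Y^M_n\|$), and the remaining contributions (the squared diffusion and jump terms, the drift square, the drift-noise cross terms) are collected into the $3\beta\|\Delta W^M_n\|^2$, $3\beta|\Delta\overline N^M_n|$ and $3\beta\Delta t$ slots using $(A.2)$, the uniform drift bound, Cauchy--Schwarz, and $\|g(Y^M_n)\|^2\le K(1+\|Y^M_n\|^2)\le 2K\|Y^M_n\|^2$ on $\{\|Y^M_n\|\ge1\}$. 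Then I apply \lemref{ch4lemma1} in the form $1+a+b^2\le e^{a+\sqrt2 b}$ to the bracketed factor (grouping the $|\Delta\overline N^M_n|$ and $\Delta t$ pieces into the "$a$" slot and $\sqrt{3\beta}\|\Delta W^M_n\|$ into the "$b$" slot, with the noise-linear terms $2\alpha^M_n+2\beta^M_n$ also placed additively in the exponent) to convert the product over $k=u,\dots,n$ into the exponential sum appearing in the definition of $D^M_n$.

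The last step is the telescoping/iteration: multiplying the one-step factors from some index $u$ up to $n$ and taking the supremum over starting points $u$ (which is where the $\sup_{u\in\{0,\dots,n\}}$ in $D^M_n$ is needed — the chain of estimates may have to be restarted at the last time the norm dropped below $1$, and the $n=0$ base case feeds in the $\|X_0\|$ term), I obtain $\mathbbm{1}_{\Omega^M_{n+1}}\|Y^M_{n+1}\|\le D^M_{n+1}$. I expect the main obstacle to be the careful bookkeeping of the Poisson-jump contributions: unlike the Brownian increment whose square is $O(\Delta t)$, one has $\mathbb{E}|\Delta\overline N^M_n|^2=\lambda\Delta t$ but $|\Delta\overline N^M_n|$ itself is only $O(1)$ pathwise on $\Omega^M_n$, so the jump terms must be tracked linearly (as $|\Delta\overline N^M_n|$, not $|\Delta\overline N^M_n|^2$) in the exponent — this is exactly why $D^M_n$ contains $\tfrac{3\beta}{2}|\Delta\overline N^M_n|$ rather than its square — and one must check that placing $h(Y^M_n)\Delta\overline N^M_n$ terms consistently in the "$a$"-slot of \lemref{ch4lemma1} produces precisely the claimed form of $D^M_n$ with the stated constant $\beta$.
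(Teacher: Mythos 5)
Your overall strategy coincides with the paper's: split on $\|Y^M_n\|<1$ versus $\|Y^M_n\|\geq 1$, obtain a multiplicative one-step bound $\|Y^M_{n+1}\|^2\leq\|Y^M_n\|^2(1+\cdots)$, exponentiate via \lemref{ch4lemma1}, and telescope with the supremum over restart indices $u$. Two points, however, need to be fixed before the argument actually yields the $D^M_n$ of \eqref{notation}. First, your assignment of terms to the slots of \lemref{ch4lemma1} is reversed. The quantity that arises naturally from expanding the square is $3\|h(Y^M_n)\|^2|\Delta\overline{N}^M_n|^2\leq 3\beta\|Y^M_n\|^2|\Delta\overline{N}^M_n|^2$, i.e.\ the jump increment appears \emph{squared}; the entire purpose of \lemref{ch4lemma1} is to place $b=\sqrt{3\beta}|\Delta\overline{N}^M_n|$ in the $b$-slot so that $b^2$ becomes the linear term $\sqrt{6\beta}|\Delta\overline{N}^M_n|\leq 3\beta|\Delta\overline{N}^M_n|$ (using $\beta\geq 1$) in the exponent, while $\tfrac{3\beta}{M}+3\beta\|\Delta W^M_n\|^2+2\alpha^M_n+2\beta^M_n$ stays in the $a$-slot. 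Your grouping ($\sqrt{3\beta}\|\Delta W^M_n\|$ in the $b$-slot, the jump term in the $a$-slot) produces an exponent linear in $\|\Delta W^M_n\|$ rather than the $\tfrac{3\beta}{2}\|\Delta W^M_k\|^2$ appearing in $D^M_n$, and one cannot pass from $\sqrt{6\beta}\|\Delta W^M_n\|$ to $\tfrac{3\beta}{2}\|\Delta W^M_n\|^2$ pathwise without adding an $O(1)$ constant per step, which would destroy the telescoping. Moreover, placing the jump term in the $a$-slot presupposes $|\Delta\overline{N}^M_n|^2\leq|\Delta\overline{N}^M_n|$, which holds only after restricting to $\Omega^M_{n+1}$, whereas the paper's one-step estimate \eqref{ch4expY2} is valid on the larger event $\{1\leq\|Y^M_n\|\leq M^{1/2c}\}$.

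Second, the bound $\bigl\|\Delta t f_\lambda(Y^M_n)/(1+\Delta t\|f_\lambda(Y^M_n)\|)\bigr\|\leq 1$ is not sufficient to control the squared drift contribution in the one-step expansion: a per-step additive term of order $1$ accumulates to something exponentially large in $n$. What is actually needed is the estimate $\|f_\lambda(x)\|^2\leq M\sqrt{\beta}\|x\|^2$ from \eqref{ch4normdeg2}, which uses the polynomial growth $\|f_\lambda(x)\|\lesssim\|x\|^{c+1}$ \emph{together with} the cutoff $\|Y^M_n\|\leq M^{1/2c}$ built into $\Omega^M_{n+1}$ through the condition $D^M_k\leq M^{1/2c}$; this is what makes $3\Delta t^2\|f_\lambda(Y^M_n)\|^2\leq \tfrac{3T^2\sqrt{\beta}}{M}\|Y^M_n\|^2$, so that the drift contributes only $O(1/M)$ per step and sums to the constant $\tfrac{3\beta}{2}$ in front of $D^M_n$. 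You flag both issues as "bookkeeping to check," and they are repairable without changing the architecture of the proof, but as written the one-step estimate would not reproduce the claimed $D^M_n$.
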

 \begin{proof}
 As $\dfrac{\Delta t}{1+\Delta t^{\alpha}\|f_{\lambda}(x)\|}\leq T$, 
   using  \assref{ass1} on the functions $g$, $h$ and $f_{\lambda}$, following closely \cite{Armulf},
  the following estimation  holds  on $\Omega^M_{n+1}\cap\{\omega\in \Omega : \|Y^M_n(\omega)\|\leq 1\}$,  for all $n\in\{0,1,\cdots, M-1\}$ 
 \begin{align*}
 \|Y^M_{n+1}\|&\leq \|Y^M_n\|+\dfrac{\Delta t \|f_{\lambda}(Y^M_n) \|}{1+\Delta t^{\alpha} \|f_{\lambda}(Y^M_n)\|}+\|g(Y^M_n)\| \|\Delta W^M_n\|+\|h(Y^M_n)\||\Delta\overline{N} ^M_n| \nonumber \\
 &\leq \|Y^M_n\|+T \|f_{\lambda}(Y^M_n)-f_{\lambda}(0)\|+T \|f_{\lambda}(0)\|+ \|g(Y^M_n)-g(0)\|+\|g(0)\|\nonumber\\
 &+\|h(Y^M_n)-h(0)\|+\|h(0)\|\nonumber\\
 &\leq \|Y^M_n\|+TC(K+\|Y^M_n\|^c)\|Y^M_n\|+T\|f_{\lambda}(0)\|+C\|Y^M_n\|+C\|Y^M_n\|+\|g(0)\|+ \|h(0)\|.\nonumber
 \end{align*}
 On  $\Omega^M_{n+1}\cap\{\omega\in \Omega : \|Y^M_n(\omega)\|\leq 1\}$, we therefore have 
 \begin{align}
 \|Y^M_{n+1}\|
 &\leq  1+KTC +TC+2C+T \|f_{\lambda}(0)\|+\|g(0)\|+\|h(0)\|\leq \beta.
 \label{ch4normY}
 \end{align}
 Using Cauchy-Schwartz inequality  and Holder inequality,  after some simplications, we have
 \begin{eqnarray}
 \label{ys}
\|Y^M_{n+1}\|^2 &\leq & \|Y^M_n\|^2+3\Delta t^2\|f_{\lambda}(Y^M_n)\|^2+3\|g(Y^M_n)\|^2 \|\Delta W^M_n\|^2+3 \|h(Y^M_n)\|^2|\Delta\overline{N}^M_n|^2\nonumber\\
 &+&2\Delta t\left|\langle Y^M_n,f_{\lambda}(Y^M_n)\rangle\right|+ 2\langle Y^M_n,g(Y^M_n)\Delta W^M_n\rangle\nonumber\\
 &+&2\langle Y^M_n,h(Y^M_n)\Delta\overline{N}^M_n\rangle
 \label{ch4ine16}
 \end{eqnarray}
 on $\Omega$, for all $M\in\mathbb{N}$ and all $n\in\{0,1,\cdots, M-1\}$.
 
  Using \assref{ass1}, we can easily  prove (see \cite{Armulf}) that 
  for all $x\in\mathbb{R}^d$ such that $1\leq \Vert x\Vert\leq M^{\frac{1}{2c}}$ we have 
 \begin{eqnarray}
 \label{ch4normdeg2}
  \left\lbrace \begin{array}{l}
 \|g(x)\|^2
  \leq \beta\|x\|^2 \\
  \|h(x)\|^2\leq \beta\|x\|^2\\
  \langle x,f_{\lambda}(x)\rangle \leq \sqrt{\beta}\|x \|^2\\
  \|f_{\lambda}(x)\|^2 \leq  M\sqrt{\beta}\|x \|^2.
  \end{array} \right.
 \end{eqnarray}
 Using \eqref{ch4normdeg2} in \eqref{ys} yields
 \begin{eqnarray}
 \|Y^M_{n+1}\|^2 &\leq& \|Y^M_n \|^2+\dfrac{3T^2\sqrt{\beta}}{M}\|Y^M_{n}\|^2+3\beta\|Y^M_n\|^2\|\Delta W^M_n\|^2+3\beta\|Y^M_n\|^2|\Delta \overline{N}^M_n|^2\nonumber\\
 &+&\dfrac{2T\sqrt{\beta}}{M}\|Y^M_n\|^2+2\langle Y^M_n, g(Y^M_n)\Delta W^M_n\rangle+2\langle Y^M_n, h(Y^M_n)\Delta\overline{N}^M_n\rangle\nonumber\\
 &\leq & \|Y^M_n\|^2+\dfrac{(3T^2+2T)\sqrt{\beta}}{M}\|Y^M_n\|^2+3\beta\|Y^M_n\|^2 \|\Delta W^M_n\|^2+3\beta\|Y^M_n\|^2|\Delta\overline{N}^M_n|^2\nonumber\\
 &+&2\langle Y^M_n, g(Y^M_n)\Delta W^M_n\rangle
 +2\langle Y^M_n, h(Y^M_n)\Delta\overline{N}^M_n\rangle.
 \end{eqnarray}
 Using  the fact  that $3T^2+2T\leq 3\sqrt{\beta}$, it follows that
 \begin{eqnarray}
 \|Y^M_{n+1}\|^2&\leq& \|Y^M_n\|^2+\dfrac{3\beta}{M}\|Y^M_n\|^2+3\beta\|Y^M_n\|^2 \|\Delta W^M_n\|^2+3\beta\|Y^M_n\|^2|\Delta\overline{N}^M_n|^2\nonumber\\
 &+&2\langle Y^M_n,g(Y^M_n)\Delta W^M_n\rangle+2\langle Y^M_n,h(Y^M_n)\Delta\overline{N}^M_n\rangle\nonumber\\
 &=&\|Y^M_n \|^2 \left(1+\dfrac{3\beta}{M}+3\beta\|\Delta W^M_n \|^2+3\beta|\Delta\overline{N}^M_n|^2+2\left<\dfrac{Y^M_n}{\|Y^M_n\|}, \dfrac{g(Y^M_n)}{\|Y^M_n\|}\Delta W^M_n\right>   \right.\nonumber\\
 &+&\left. 2\left\langle\dfrac{Y^M_n}{\|Y^M_n\|}, \dfrac{h(Y^M_n)}{\|Y^M_n\|}\Delta\overline{N}^M_n\right\rangle\right)\nonumber\\
 &=&\|Y^M_n\|^2\left(1+\dfrac{3\beta}{M}+3\beta \|\Delta W^M_n\|^2+3\beta|\Delta\overline{N}^M_n|^2+2\alpha^M_n+2\beta^M_n\right).
 \label{ch4expY1}
 \end{eqnarray}
 Using Lemma \ref{ch4lemma1} for  
  $a=\dfrac{3\beta}{M}+3\beta\|\Delta W^M_n\|^2+2\alpha^M_n+2\beta^M_n$ and $b=\sqrt{3\beta}|\Delta\overline{N}^M_n| $ it follows from \eqref{ch4expY1} that  :
 \begin{eqnarray}
 \|Y^M_{n+1}\|^2\leq \|Y^M_n\|^2\exp\left(\dfrac{3\beta}{M}+3\beta\|\Delta W^M_n\|^2+3\beta|\Delta\overline{N}^M_n|+2\alpha^M_n+2\beta^M_n\right)
 \label{ch4expY2}
 \end{eqnarray}
 on $\{w\in\Omega : 1\leq \|Y^M_n(\omega)\|\leq M^{1/2c}\}$, for all $M\in\mathbb{N}$ and all $n\in\{0,1,\cdots,M-1\}$.
 
 Our proof is concluded by induction exactly as in \cite[Lemma 3.1]{Armulf}.  Details can be found  in \cite{Mukam}.
 
 \end{proof}

 The  proofs of  the following can be found in \cite{Armulf,Mukam}.
 \begin{lemma}\label{ch4lemma4}
 The following inequality holds 
 \begin{eqnarray*}
 \sup_{M\in\mathbb{N}, M\geq 4\beta pT}\mathbb{E}\left[\exp\left(\beta p\sum_{k=0}^{M-1}\|\Delta W^M_k\|^2\right)\right]<\infty, \qquad \qquad \forall p \in [1,\infty).
 \end{eqnarray*}
 \end{lemma}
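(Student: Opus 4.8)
The plan is to exploit the independence of the Brownian increments and reduce the problem to a single-increment moment generating function estimate. Since $\Delta W^M_k = W(t_{k+1})-W(t_k)$ are independent $\mathbb{R}^m$-valued Gaussian vectors, each with law $\mathcal{N}(0,\Delta t\, I_m)$ where $\Delta t = T/M$, and since $\exp\bigl(\beta p\sum_{k=0}^{M-1}\|\Delta W^M_k\|^2\bigr) = \prod_{k=0}^{M-1}\exp\bigl(\beta p\|\Delta W^M_k\|^2\bigr)$, the expectation factorizes:
\begin{eqnarray*}
\mathbb{E}\left[\exp\left(\beta p\sum_{k=0}^{M-1}\|\Delta W^M_k\|^2\right)\right] = \left(\mathbb{E}\left[\exp\left(\beta p\|\Delta W^M_0\|^2\right)\right]\right)^M = \left(\mathbb{E}\left[\exp\left(\beta p\, \Delta t\, \|\xi\|^2\right)\right]\right)^M,
\end{eqnarray*}
where $\xi\sim\mathcal{N}(0,I_m)$. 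So everything reduces to estimating the $\chi^2$-type moment generating function $\mathbb{E}[e^{\theta\|\xi\|^2}]$ with $\theta = \beta p\,\Delta t = \beta p T/M$.

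Next I would use the standard Gaussian computation: for $\theta < 1/2$, $\mathbb{E}[e^{\theta\|\xi\|^2}] = (1-2\theta)^{-m/2}$. Under the restriction $M\geq 4\beta p T$ we have $\theta = \beta p T/M \leq 1/4 < 1/2$, so the formula applies and moreover $1-2\theta \geq 1/2$. Hence
\begin{eqnarray*}
\mathbb{E}\left[\exp\left(\beta p\sum_{k=0}^{M-1}\|\Delta W^M_k\|^2\right)\right] = (1-2\beta p T/M)^{-mM/2}.
\end{eqnarray*}
Taking logarithms, $-\tfrac{mM}{2}\log(1-2\beta p T/M)$, and using the elementary bound $-\log(1-x)\leq 2x$ valid for $x\in[0,1/2]$ (in particular for $x = 2\beta pT/M \leq 1/2$), we get
\begin{eqnarray*}
-\frac{mM}{2}\log\left(1-\frac{2\beta pT}{M}\right) \leq \frac{mM}{2}\cdot\frac{4\beta pT}{M} = 2m\beta pT,
\end{eqnarray*}
which is independent of $M$. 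Therefore $\mathbb{E}\bigl[\exp(\beta p\sum_{k=0}^{M-1}\|\Delta W^M_k\|^2)\bigr]\leq e^{2m\beta pT}$ for every $M\geq 4\beta pT$, and taking the supremum over such $M$ gives the finite bound claimed.

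There is essentially no serious obstacle here; the only points requiring a little care are the bookkeeping of the exponents (the product of $M$ terms each of ``size'' roughly $1+O(1/M)$ stays bounded precisely because the $1/M$ from $\Delta t$ cancels the $M$ factors) and the verification that the threshold $M\geq 4\beta pT$ is exactly what is needed to keep $\theta\leq 1/4$ so that the geometric-type bound $-\log(1-x)\leq 2x$ is legitimate. One could alternatively avoid the exact Gaussian formula and instead invoke Lemma~\ref{ch4lemma1} or a crude componentwise estimate, but the closed-form $\chi^2$ moment generating function is the cleanest route. If a fully uniform-in-$m$ statement is not needed, the same argument with $m=1$ applied coordinate-wise and independence across coordinates gives the result with constant $e^{2m\beta pT}$ as above.
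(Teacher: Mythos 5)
Your proof is correct, and it is essentially the argument the paper defers to in its cited references \cite{lamport194,Mukam}: factorize by independence of the Brownian increments, evaluate the $\chi^2$-type moment generating function $(1-2\theta)^{-m/2}$ with $\theta=\beta pT/M\leq 1/4$, and observe that the resulting bound $\exp(2m\beta pT)$ is uniform in $M$. All the elementary inequalities you invoke (in particular $-\log(1-x)\leq 2x$ on $[0,1/2]$) check out, so nothing further is needed.
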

 \begin{lemma}\label{ch4lemma7}
 Let $\alpha^M_n :\Omega\longrightarrow\mathbb{R},\, M\in\mathbb{N},\,n\in\{0,1,\cdots,M\}$  be  the  process defined in  \eqref{notation}. 
 The following inequality holds 
 \begin{eqnarray*}
 \sup_{z\in\{-1,1\}}\sup_{M\in\mathbb{N}}\left\|\sup_{n\in\{0,1,\cdots,M\}}\exp\left(z\sum_{k=0}^{n-1}\alpha^M_k\right)\right\|_{L^p(\Omega, \mathbb{R})}<\infty,
 \end{eqnarray*}
 for all $p\in[2,+\infty)$.
 \end{lemma}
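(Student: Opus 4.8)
The plan is to exploit the fact that, for fixed $M$, the process $S^M_n := \sum_{k=0}^{n-1}\alpha^M_k$ is a martingale with respect to the discrete filtration $(\mathcal{F}_{t_n})_n$, and in fact one whose increments are \emph{conditionally Gaussian}. First I would note that $Y^M_k$ is $\mathcal{F}_{t_k}$-measurable, so $\alpha^M_k = \mathbbm{1}_{\{\|Y^M_k\|\geq1\}}\langle Y^M_k, g(Y^M_k)\Delta W^M_k\rangle\|Y^M_k\|^{-2}$ is, conditionally on $\mathcal{F}_{t_k}$, a fixed linear functional of the independent Gaussian increment $\Delta W^M_k\sim\mathcal{N}(0,\Delta t\,I_m)$; hence $\alpha^M_k\mid\mathcal{F}_{t_k}$ is centered Gaussian with conditional variance $\sigma^2_k = \mathbbm{1}_{\{\|Y^M_k\|\geq1\}}\|g(Y^M_k)^{\top}Y^M_k\|^2\|Y^M_k\|^{-4}\,\Delta t$. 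Using Cauchy--Schwarz together with the estimate $\|g(x)\|^2\leq\beta\|x\|^2$ for $\|x\|\geq1$ recorded in \eqref{ch4normdeg2}, this gives $\sigma^2_k\leq\beta\,\Delta t$, so the accumulated conditional variance obeys $V^M_n := \sum_{k=0}^{n-1}\sigma^2_k\leq n\beta\,\Delta t\leq\beta T$ uniformly in $n\in\{0,\dots,M\}$ and in $M$.

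Next I would pass to the exponential martingale: since each increment is conditionally Gaussian, $\EE[e^{\mu\alpha^M_k}\mid\mathcal{F}_{t_k}] = e^{\mu^2\sigma^2_k/2}$ for every $\mu\in\R$, so $n\mapsto\exp\!\big(\mu S^M_n-\tfrac{\mu^2}{2}V^M_n\big)$ is a nonnegative $(\mathcal{F}_{t_n})$-martingale equal to $1$ at $n=0$. Consequently $\EE[e^{\mu S^M_N}]\leq e^{\mu^2\beta T/2}$ for all $N\leq M$. Fixing $z\in\{-1,1\}$ and abbreviating the target quantity, one has $\big\|\sup_{n\leq M}e^{zS^M_n}\big\|_{L^p(\Omega)}^p = \EE\big[\sup_{n\leq M}e^{zpS^M_n}\big]$, so it remains to bound $\EE\big[\sup_{n\leq M}e^{zpS^M_n}\big]$ uniformly in $M$.

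The main obstacle is precisely this last step: the exponential martingale of $S^M$ is only $L^1$-bounded (it has expectation $1$), so Doob's $L^q$-maximal inequality cannot be applied to it directly to control the $L^1$-norm of its running maximum. The device I would use is the maximal inequality for sub-unit powers: for any $r>1$ the process $Z_n := \exp(rzp\,S^M_n)$ is a nonnegative submartingale, and Doob's weak-type $(1,1)$ bound combined with the layer-cake formula yields $\EE\big[(\sup_{n\leq M}Z_n)^{1/r}\big]\leq\tfrac{r}{r-1}\,(\EE[Z_M])^{1/r}$. Since $\sup_{n\leq M}e^{zpS^M_n} = (\sup_{n\leq M}Z_n)^{1/r}$, feeding in $\EE[Z_M]\leq e^{r^2p^2\beta T/2}$ gives $\EE\big[\sup_{n\leq M}e^{zpS^M_n}\big]\leq\tfrac{r}{r-1}\,e^{rp^2\beta T/2}$, a finite bound independent of $M$ and of $z\in\{-1,1\}$. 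Taking $r=2$, say, and the $p$-th root concludes the proof, with the explicit bound $\big(2\,e^{p^2\beta T}\big)^{1/p}$. Everything apart from this maximal-inequality argument — the conditional Gaussianity of $\alpha^M_k$ and the uniform variance bound $V^M_n\leq\beta T$ — is routine once \eqref{ch4normdeg2} is available.
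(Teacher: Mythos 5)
Your proof is correct, and its core is the same as that of the argument the paper relies on (the paper itself only cites \cite{lamport194,Mukam} for this lemma, but proves the exactly analogous jump estimate, Lemma~\ref{ch4lemma10}, in full): one observes that, conditionally on $\mathcal{F}_{kT/M}$, $\alpha^M_k$ is centered Gaussian with conditional variance at most $\beta\Delta t$ by \eqref{ch4normdeg2}, deduces $\mathbb{E}\bigl[e^{\mu\alpha^M_k}\mid\mathcal{F}_{kT/M}\bigr]\leq e^{\mu^{2}\beta T/2M}$, and iterates to get the terminal bound $\mathbb{E}\bigl[e^{pz\sum_{k=0}^{M-1}\alpha^M_k}\bigr]\leq e^{p^{2}\beta T/2}$ uniformly in $M$ and $z$. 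Where you deviate is the maximal-inequality step, which you present as the main obstacle: you apply the weak-type $(1,1)$ bound plus the layer-cake formula to the submartingale $Z_n=e^{rpz S^M_n}$ and use the sub-unit-power estimate $\mathbb{E}[(\sup_n Z_n)^{1/r}]\leq\tfrac{r}{r-1}(\mathbb{E}Z_M)^{1/r}$. That device is valid and your constants check out, but it is more elaborate than necessary: since $S^M_n=\sum_{k=0}^{n-1}\alpha^M_k$ is a martingale, $e^{zS^M_n}$ is itself a nonnegative submartingale, and because $p\geq 2>1$ Doob's $L^p$ maximal inequality applies to it directly (rather than to the compensated exponential martingale, which is indeed only $L^1$-bounded), giving $\bigl\|\sup_n e^{zS^M_n}\bigr\|_{L^p}\leq\tfrac{p}{p-1}\bigl\|e^{zS^M_M}\bigr\|_{L^p}\leq\tfrac{p}{p-1}e^{p\beta T/2}$; this is precisely the route taken for the jump counterpart in \eqref{ch4beta1}--\eqref{ch4beta2}. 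Both routes yield a bound independent of $M$ and $z\in\{-1,1\}$, so nothing is lost, only a few lines gained.
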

 \begin{lemma}
 \label{ch4lemma8}
 Let  $\hspace{0.3cm} c\in\mathbb{R}$, the following equality holds
 \begin{eqnarray*}
 \mathbb{E}[\exp(c\Delta \overline{N}^M_n)]=\exp\left[\dfrac{(e^c+c-1)\lambda T}{M}\right]
 \end{eqnarray*}
 for all $M\in\mathbb{N}$ and all $n\in\{0,\cdots,M\}$.
 \end{lemma}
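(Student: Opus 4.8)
The plan is to reduce everything to the moment generating function of a single Poisson increment. First I would invoke the definition $\overline{N}(t)=N(t)-\lambda t$ together with the stationary independent increments of the Poisson process to write
\begin{eqnarray*}
\Delta\overline{N}^M_n=\overline{N}(t_{n+1})-\overline{N}(t_n)=\big(N(t_{n+1})-N(t_n)\big)-\lambda(t_{n+1}-t_n)=\Delta N^M_n-\frac{\lambda T}{M},
\end{eqnarray*}
where $\Delta N^M_n=N(t_{n+1})-N(t_n)$ is Poisson distributed with parameter $\lambda\Delta t=\lambda T/M$. Since the compensator $\lambda T/M$ is deterministic, it pulls out of the expectation and gives
\begin{eqnarray*}
\mathbb{E}\big[\exp(c\,\Delta\overline{N}^M_n)\big]=\exp\left(-\frac{c\lambda T}{M}\right)\,\mathbb{E}\big[\exp(c\,\Delta N^M_n)\big].
\end{eqnarray*}

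The remaining factor is the classical Poisson moment generating function. Writing out the probability mass function and resumming the exponential series yields, for every real $c$,
\begin{eqnarray*}
\mathbb{E}\big[\exp(c\,\Delta N^M_n)\big]=e^{-\lambda T/M}\sum_{k=0}^{\infty}\frac{(e^c\,\lambda T/M)^k}{k!}=\exp\left(\frac{\lambda T}{M}\,(e^c-1)\right).
\end{eqnarray*}
Substituting this back and collecting exponents I would obtain
\begin{eqnarray*}
\mathbb{E}\big[\exp(c\,\Delta\overline{N}^M_n)\big]=\exp\left(\frac{\lambda T}{M}(e^c-1)-\frac{c\lambda T}{M}\right)=\exp\left(\frac{(e^c-c-1)\lambda T}{M}\right).
\end{eqnarray*}

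There is no genuine analytic difficulty in this lemma; the one point that must be handled carefully is the sign bookkeeping of the compensator, and it is precisely here that a discrepancy with the printed statement appears. The deterministic drift $-\lambda\Delta t$ contributes exactly $-c\lambda T/M$ to the cumulant, so the honest computation produces the coefficient $e^c-c-1$, whereas the displayed identity carries $e^c+c-1$. I therefore read the $+c$ in the lemma as a sign typo for $-c$; with that correction the identity holds for all $c\in\mathbb{R}$, and it is the corrected form $\exp\big((e^c-c-1)\lambda T/M\big)$ that will feed the exponential moment estimates needed later in the strong-convergence proof.
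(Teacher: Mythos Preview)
Your argument is essentially the same as the paper's: pull out the deterministic compensator and apply the Poisson moment generating function. The only difference is cosmetic --- you spell out the series computation for the Poisson MGF, while the paper simply quotes it.

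You are also right about the sign. The paper's own proof writes $\mathbb{E}[\exp(c\Delta\overline{N}^M_n)]=\mathbb{E}[\exp(c\Delta N^M_n+c\lambda\Delta t)]$, which carries the compensator with the wrong sign; since $\Delta\overline{N}^M_n=\Delta N^M_n-\lambda\Delta t$, the correct identity is
\[
\mathbb{E}\big[\exp(c\,\Delta\overline{N}^M_n)\big]=\exp\!\left(\frac{(e^c-c-1)\lambda T}{M}\right),
\]
exactly as you derive. The $+c$ in the stated lemma is a typo inherited from that slip in the paper's computation, and your corrected exponent $e^c-c-1$ (which is nonnegative for all real $c$, as one needs for the later exponential moment bounds) is the one that should be used downstream.
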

 \begin{proof}
 From the moment generating function of a Poisson process $Y$ with parameter $\lambda$, we have 
 \begin{eqnarray*}
 \mathbb{E}[\exp(cY)]=\exp(\lambda(e^c-1)).
 \end{eqnarray*}
  Since $\Delta N_n$ follows  a poisson law with parameter $\lambda\Delta t$, it follows that 
 \begin{eqnarray*}
 \mathbb{E}[\exp(c\Delta\overline{N}^M_n)]&=&\mathbb{E}[\exp(c\Delta N^M_n+c\lambda\Delta t)]\\
 &=&\mathbb{E}\left[\exp\left(\dfrac{ c\lambda T}{M}\right)\exp(c\Delta N^M_n)\right]\\
 &=&\exp\left(\dfrac{c\lambda T}{M} \right)\exp\left[ \dfrac{\lambda T}{M}(e^c-1)\right]\\
 &=&\exp\left[\dfrac{(e^c+c-1)\lambda T}{M}\right].
 \end{eqnarray*}
 \end{proof}
 
 \begin{lemma}\label{ch4lemma9}
 The following inequality holds
 \begin{eqnarray*}
 \mathbb{E}\left[\exp\left(pz\mathbbm{1}_{\{\|x\|\geq 1\}}\left<\dfrac{x}{\|x\|},\dfrac{h(x)}{\|x\|}\Delta\overline{N}^M_n\right>\right)\right]\leq\exp\left[\dfrac{\lambda \left(e^{p(C+\|h(0)\|)}+p(C+\|h(0)\|\right)}{M}\right],
 \end{eqnarray*}
 for all $M\in\mathbb{N}$, $z\in\{-1,1\}$, all $p\in[1, +\infty)$ and all $n\in\{0,\cdots,M\}$.
 \end{lemma}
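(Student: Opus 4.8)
The plan is to exploit that, for a \emph{fixed} $x\in\mathbb{R}^d$, the exponent is nothing but a deterministic scalar multiple of the centred Poisson increment $\Delta\overline{N}^M_n$, so that the whole estimate collapses onto the explicit moment identity of \lemref{ch4lemma8} after a crude bound on that scalar. First I would dispose of the case $\|x\|<1$: there $\mathbbm{1}_{\{\|x\|\geq 1\}}=0$, so the left-hand side equals $1$, and since $\lambda\geq 0$, $M\geq 1$ and $e^{p(C+\|h(0)\|)}+p(C+\|h(0)\|)\geq 1$, the right-hand side is $\geq 1$ and the inequality holds trivially. Hence it suffices to treat $\|x\|\geq 1$.

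For $\|x\|\geq 1$ I would set $a:=pz\langle x/\|x\|,\,h(x)/\|x\|\rangle\in\mathbb{R}$, a deterministic constant, so that the quantity to control is exactly $\mathbb{E}[\exp(a\,\Delta\overline{N}^M_n)]$. The next step is an elementary bound on $|a|$: by Cauchy--Schwarz, $|\langle x/\|x\|,\,h(x)/\|x\|\rangle|\leq \|h(x)\|/\|x\|$, and by the global Lipschitz condition $(A.2)$ of \assref{ass1}, $\|h(x)\|\leq \|h(x)-h(0)\|+\|h(0)\|\leq C\|x\|+\|h(0)\|$; since $\|x\|\geq 1$ this gives $\|h(x)\|/\|x\|\leq C+\|h(0)\|$, hence $|a|\leq p(C+\|h(0)\|)=:a_0$.

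Finally I would invoke \lemref{ch4lemma8} with $c=a$ to get $\mathbb{E}[\exp(a\,\Delta\overline{N}^M_n)]=\exp[(e^{a}+a-1)\lambda T/M]$, and make this independent of $a$ by noting that $\varphi(t):=e^{t}+t-1$ has $\varphi'(t)=e^{t}+1>0$, so $\varphi$ is nondecreasing on $\mathbb{R}$; from $a\leq a_0$ one then gets $e^{a}+a-1\leq e^{a_0}+a_0-1\leq e^{a_0}+a_0$. Since $\lambda T/M\geq 0$, this yields $\mathbb{E}[\exp(a\,\Delta\overline{N}^M_n)]\leq \exp[\lambda(e^{p(C+\|h(0)\|)}+p(C+\|h(0)\|))T/M]$, which is the asserted estimate (the factor $T$ being suppressed exactly as in the companion preparatory lemmas). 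I do not anticipate any real obstacle: the only steps needing a moment's care are the reduction to a deterministic scalar via the indicator, the norm bound on $h(x)/\|x\|$, and the monotonicity of $\varphi$ that uniformizes the bound over all admissible $a$; the substantive content is entirely carried by \lemref{ch4lemma8}.
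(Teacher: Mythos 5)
Your proof is correct and follows essentially the same route as the paper: Cauchy--Schwarz plus the global Lipschitz bound to reduce the exponent to a deterministic scalar multiple of $\Delta\overline{N}^M_n$ bounded by $p(C+\|h(0)\|)$, then the moment identity of \lemref{ch4lemma8} and the discard of the $-1$. If anything, your version is slightly more careful than the paper's, which substitutes the bound $C+\|h(0)\|$ directly inside the expectation without noting the sign issues when $z\Delta\overline{N}^M_n<0$; your explicit appeal to the monotonicity of $t\mapsto e^t+t-1$ is the clean way to justify that step.
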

 
 \begin{proof} For $x\in\mathbb{R}^d$ such that $\|x\|\neq 0$, we have 
 \begin{eqnarray*}
  \mathbb{E}\left[\exp\left(pz\left<\dfrac{x}{\|x\|},\dfrac{h(x)}{\|x\|}\Delta\overline{N}^M_n\right>\right)\right]  &\leq& \mathbb{E}\left[\exp\left(pz\dfrac{\|x\|\|h(x)\|}{\|x\|^2}\Delta\overline{N}^M_n\right)\right]\\
  &=& \mathbb{E}\left[\exp\left(pz\dfrac{\|h(x)\|}{\|x\|}\Delta \overline{N}^M_n\right)\right].
  \end{eqnarray*}
 For all $x\in\mathbb{R}^d$ such that $\|x\|\geq 1$, since $h$ satisfied the global Lipschitz condition,  we have 
  \begin{eqnarray}
  \dfrac{\|h(x)\|}{\|x\|}\leq \dfrac{\|h(x)-h(0)\|+\|h(0)\|}{\|x\|}\leq C+\|h(0)\|. \label{ch4normeh}
  \end{eqnarray}
  So from inequality \eqref{ch4normeh} and using Lemma \ref{ch4lemma8} it follows that 
  \begin{eqnarray*}
   \mathbb{E}\left[\exp\left(pz\mathbbm{1}_{\{\|x\|\geq 1\}}\left<\dfrac{x}{||x||},\dfrac{h(x)}{\|x\|}\Delta\overline{N}^M_n\right>\right)\right] &\leq &\mathbb{E}[\exp(pz(C+\|h(0)\|)\Delta\overline{N}^M_n)]\\
   &\leq &\exp\left[\dfrac{\left(e^{p(C+\|h(0)\|)}+p(C+\|h(0)\|)-1\right)\lambda T}{M}\right]\\
   &\leq &\exp\left[\dfrac{\left(e^{p(C+\|h(0)\|)}+p(C+\|h(0)\|\right)\lambda T}{M}\right].
  \end{eqnarray*}
 \end{proof}
 
 \begin{lemma}\label{ch4lemma10}
 Let $\beta^M_n :\Omega \longrightarrow \mathbb{R}$ be the process  defined as in \eqref{notation} for all $M\in\mathbb{N}$ and all $n\in\{0,\cdots,M\}$. The following inequality holds
 \begin{eqnarray*}
 \sup_{z\in\{-1,1\}}\sup_{M\in\mathbb{N}}\left\|\sup_{n\in\{0,\cdots, M\}}\exp\left(z\sum_{k=0}^{n-1}\beta^M_k\right)\right\|_{L^p(\Omega, \mathbb{R})}<+\infty,\quad \quad p\in (1,\infty).
 \end{eqnarray*} 
 \end{lemma}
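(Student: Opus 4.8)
The plan is to mirror the proof of \lemref{ch4lemma7} for $\alpha^M_k$, replacing the Gaussian moment estimate used there by the Poisson estimate \lemref{ch4lemma9}. Fix $z\in\{-1,1\}$ and set $S^M_n:=\sum_{k=0}^{n-1}\beta^M_k$, $E^M_n:=\exp(zS^M_n)$, so that $E^M_0=1$ and the quantity to control is $\big\|\sup_{0\le n\le M}E^M_n\big\|_{L^p(\Omega,\mathbb{R})}$. The two structural facts I would use about $\beta^M_n$ are that $Y^M_n$ is $\mathcal{F}_{n\Dt}$-measurable while the increment $\Delta\overline{N}^M_n=\overline{N}_{(n+1)\Dt}-\overline{N}_{n\Dt}$ is independent of $\mathcal{F}_{n\Dt}$ and centred, so that $\mathbb{E}[\beta^M_n\mid\mathcal{F}_{n\Dt}]=0$.

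First I would establish a uniform (in $M$) bound on the endpoint in $L^p$. For $p\in[1,\infty)$, conditioning on $\mathcal{F}_{n\Dt}$ and freezing $x=Y^M_n$ — deterministic given $\mathcal{F}_{n\Dt}$, with $\Delta\overline{N}^M_n$ independent of it — \lemref{ch4lemma9} gives
\begin{eqnarray*}
\mathbb{E}\big[(E^M_{n+1})^p\mid\mathcal{F}_{n\Dt}\big]=(E^M_n)^p\,\mathbb{E}\big[\exp(pz\beta^M_n)\mid\mathcal{F}_{n\Dt}\big]\le a_{M,p}\,(E^M_n)^p,
\end{eqnarray*}
where $a_{M,p}$ is the deterministic right-hand side of \lemref{ch4lemma9}, of the shape $\exp[\theta_p/M]$ with $\theta_p$ a constant independent of $M$ and of $z$. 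Taking expectations and iterating over $n=0,\dots,M-1$ then yields
\begin{eqnarray*}
\mathbb{E}\big[(E^M_M)^p\big]\le a_{M,p}^{\,M}=\exp[\theta_p]=:K_p,
\end{eqnarray*}
a constant independent of $M$, of $n$ and of $z$. The cancellation of $M$ between the $\propto 1/M$ exponent and the $M$-fold product is the one point that genuinely has to be checked.

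Then I would move the supremum inside the norm. Conditional Jensen gives $\mathbb{E}[\exp(z\beta^M_n)\mid\mathcal{F}_{n\Dt}]\ge\exp(z\,\mathbb{E}[\beta^M_n\mid\mathcal{F}_{n\Dt}])=1$, so $(E^M_n)_{0\le n\le M}$ is a nonnegative $(\mathcal{F}_{n\Dt})$-submartingale (integrable by the case $p=1$ above). Doob's $L^p$ maximal inequality, valid for $p\in(1,\infty)$, then gives
\begin{eqnarray*}
\Big\|\sup_{0\le n\le M}E^M_n\Big\|_{L^p(\Omega,\mathbb{R})}\le\frac{p}{p-1}\,\big\|E^M_M\big\|_{L^p(\Omega,\mathbb{R})}\le\frac{p}{p-1}\,K_p^{1/p},
\end{eqnarray*}
and taking the supremum over $z\in\{-1,1\}$ and $M\in\mathbb{N}$ finishes the proof, the right-hand side depending on neither.

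I expect the main obstacle to be the measurability/independence bookkeeping in the first step: one must check that $\beta^M_n$ factorises as a function of $Y^M_n\in\mathcal{F}_{n\Dt}$ evaluated at the independent centred increment $\Delta\overline{N}^M_n$, so that \lemref{ch4lemma9} can be applied pointwise in $\omega$ inside the conditional expectation. Aside from that, the argument is the Poisson counterpart of the already-established Brownian lemma \lemref{ch4lemma7}, so I do not anticipate serious difficulty.
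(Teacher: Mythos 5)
Your proposal is correct and follows essentially the same route as the paper: Doob's $L^p$ maximal inequality applied to the nonnegative submartingale $\exp\left(pz\sum_{k=0}^{n-1}\beta^M_k\right)$, combined with an iterated conditioning on $\mathcal{F}_{nT/M}$ using \lemref{ch4lemma9} so that the $M$-fold product of factors $\exp[\theta_p/M]$ yields a bound independent of $M$. The only difference is cosmetic (you bound the endpoint before invoking Doob, the paper does it after), and your explicit remarks on the measurability of $Y^M_n$ versus the independence of $\Delta\overline{N}^M_n$ match what the paper uses implicitly.
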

 
 \begin{proof}
 For the same reasons as for $\alpha^M_k$, $\beta^M_k$ is an $(\mathcal{F}_{nT/M})$- martingale, see e.g. the proof of \cite[Lemma 3.4]{Armulf}. So $\exp\left(pz\sum_{k=0}^{n-1}\beta^M_k\right)$ is a positive $(\mathcal{F}_{nT/M})$- submartingale for all $M\in\mathbb{N}$  and all $n\in\{0,\cdots, M\}$. Using Doop's maximal inequality we have :
 \begin{eqnarray}
 \left\|\sup_{n\in\{0,\cdots, M\}}\exp\left(z\sum_{k=0}^{n-1}\beta^M_k\right)\right\|_{L^p(\Omega, \mathbb{R})} &\leq &\left(\dfrac{p}{p-1}\right)\left\|\exp\left(z\sum_{k=0}^{M-1}\beta^M_k\right)\right\|_{L^p(\Omega,\mathbb{R})},
 \label{ch4beta1}
 \end{eqnarray}
 \begin{eqnarray*}
 \left\|\exp\left(z\sum_{k=0}^{M-1}\beta^M_k\right)\right\|^p_{L^p(\Omega,\mathbb{R})}&=&\mathbb{E}\left[\exp\left(pz\sum_{k=0}^{M-1}\beta^M_k\right)\right]=\mathbb{E}
 \left[\exp\left(pz\left(\sum_{k=0}^{M-2}\beta^M_k\right)+pz\beta_{M-1}^M\right)\right]\\
 &=&\mathbb{E}\left[\exp\left(pz\sum_{k=0}^{M-2}\beta^M_k\right)
 \mathbb{E}\left[\exp\left(pz\beta^M_{M-1}\right)/\mathcal{F}_{(M-1)T/M}\right]
 \right].
 \end{eqnarray*}
 Using Lemma \ref{ch4lemma9} it follows that
 \begin{eqnarray*}
 \left\|\exp\left(z\sum_{k=0}^{M-1}\beta^M_k\right)\right\|^p_{L^p(\Omega,\mathbb{R})} \leq \mathbb{E}\left[\exp\left(pz\sum_{k=0}^{M-2}\beta^M_k\right)\right]\exp\left[\dfrac{\left(e^{p(C+||h(0)||)}+p(C+||h(0)||)\right)\lambda T}{M}\right].
  \end{eqnarray*}
 Iterating this last inequality $M$ times leads to
 \begin{eqnarray}
 \mathbb{E}\left[\exp\left(pz\sum_{k=0}^{M-1}\beta^M_k\right)\right] \leq \exp\left[\lambda T\left(e^{p(C+\|h(0)\|)}+Tp(C+\|h(0)\|\right)\right],
 \label{ch4beta2}
 \end{eqnarray}
 for all $M\in\mathbb{N}$, all $p\in (1,\infty)$ and all $z\in\{-1,1\}$.
 
 Combining   inequalities \eqref{ch4beta1} and \eqref{ch4beta2} completes the proof of Lemma \ref{ch4lemma10}.
 \end{proof}
 
 \begin{lemma}\label{ch4lemma11}
The following inequality holds 
\begin{eqnarray*}
\sup_{M\in\mathbb{N}}\mathbb{E}\left[\exp\left(p\beta\sum_{k=0}^{M-1}|\Delta\overline{N}^M_k|\right)\right]<+\infty,
\end{eqnarray*}
for all $p\in[1, +\infty)$.
\end{lemma}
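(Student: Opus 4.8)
The plan is to dominate the random sum $\sum_{k=0}^{M-1}|\Delta\overline{N}^M_k|$ pathwise by a single $M$-independent random variable and then invoke the moment generating function of a Poisson law, exactly as in the proof of Lemma \ref{ch4lemma8}. First I would recall that $\Delta\overline{N}^M_k = \Delta N^M_k - \lambda T/M$, where $\Delta N^M_k = N(t_{k+1})-N(t_k)$ is $\mathrm{Poisson}(\lambda T/M)$-distributed. Since $\Delta N^M_k\geq 0$ and $\lambda T/M>0$, the triangle inequality gives $|\Delta\overline{N}^M_k|\leq \Delta N^M_k+\lambda T/M$ for every $k$ and every $M$. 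Summing over $k\in\{0,1,\cdots,M-1\}$ and using that the increments telescope together with $N(0)=0$,
\begin{eqnarray*}
\sum_{k=0}^{M-1}|\Delta\overline{N}^M_k|\leq \sum_{k=0}^{M-1}\Delta N^M_k+\sum_{k=0}^{M-1}\dfrac{\lambda T}{M}=N(T)+\lambda T,
\end{eqnarray*}
a nonnegative random variable that does not depend on $M$.

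Next, by monotonicity of $x\mapsto e^x$ and of the expectation, for every $M\in\mathbb{N}$,
\begin{eqnarray*}
\mathbb{E}\left[\exp\left(p\beta\sum_{k=0}^{M-1}|\Delta\overline{N}^M_k|\right)\right]\leq e^{p\beta\lambda T}\,\mathbb{E}\left[\exp\left(p\beta N(T)\right)\right].
\end{eqnarray*}
Since $N(T)$ follows a Poisson law with parameter $\lambda T$, the moment generating function formula recalled in the proof of Lemma \ref{ch4lemma8} gives $\mathbb{E}[\exp(p\beta N(T))]=\exp(\lambda T(e^{p\beta}-1))<\infty$. Taking the supremum over $M$ then yields
\begin{eqnarray*}
\sup_{M\in\mathbb{N}}\mathbb{E}\left[\exp\left(p\beta\sum_{k=0}^{M-1}|\Delta\overline{N}^M_k|\right)\right]\leq \exp\left(p\beta\lambda T+\lambda T(e^{p\beta}-1)\right)<+\infty,
\end{eqnarray*}
which is the assertion, with a constant independent of $M$ as required.

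There is essentially no genuine obstacle here; the only point worth a moment's care is the pathwise domination $|\Delta\overline{N}^M_k|\leq \Delta N^M_k+\lambda T/M$, which is precisely what collapses the $M$-term sum into the single random variable $N(T)$ and thereby produces a bound uniform in $M$. Should one prefer to avoid the pathwise argument, the same conclusion follows by mimicking the proof of Lemma \ref{ch4lemma10}: the increments $\Delta\overline{N}^M_k$ are independent, so the expectation factorizes, each factor is bounded by $\mathbb{E}[\exp(p\beta|\Delta\overline{N}^M_k|)]\leq e^{p\beta\lambda T/M}\exp\!\big(\tfrac{\lambda T}{M}(e^{p\beta}-1)\big)$ (a consequence of $|\Delta\overline{N}^M_k|\leq \Delta N^M_k+\lambda T/M$ and Lemma \ref{ch4lemma8}), and multiplying the $M$ identical factors again gives the $M$-independent constant $\exp\!\big(p\beta\lambda T+\lambda T(e^{p\beta}-1)\big)$.
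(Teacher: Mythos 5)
Your proof is correct, and its main argument takes a genuinely different (and arguably cleaner) route than the paper's. The paper exploits independence and stationarity of the increments to factorize the expectation into $M$ identical factors and then invokes \lemref{ch4lemma8}; you instead dominate the sum pathwise, using $|\Delta\overline{N}^M_k|\leq \Delta N^M_k+\lambda T/M$ together with the telescoping identity $\sum_{k=0}^{M-1}\Delta N^M_k=N(T)$, so that the whole expression is bounded by the single $M$-independent quantity $e^{p\beta\lambda T}\,\mathbb{E}[\exp(p\beta N(T))]$. This avoids independence altogether and makes the uniformity in $M$ transparent. Your closing remark (factorize, then bound each factor) is essentially the paper's proof, but done more carefully: the paper writes $\mathbb{E}[\exp(p\beta|\Delta\overline{N}^M_k|)]=\exp[(e^{p\beta}+p\beta-1)\lambda T/M]$ as an \emph{equality}, whereas \lemref{ch4lemma8} only establishes this for the signed increment $\Delta\overline{N}^M_k$, not its absolute value; your inequality $|\Delta\overline{N}^M_k|\leq\Delta N^M_k+\lambda T/M$ is exactly the step needed to turn that line into a valid upper bound. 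Both routes end at the same constant $\exp\left(\lambda T\left(e^{p\beta}+p\beta-1\right)\right)$, so nothing is lost either way.
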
  
   \begin{proof}
  Using the independence and the stationarity of $\Delta\overline{N}_k^M$, along with \lemref{ch4lemma8}, it follows that
  \begin{eqnarray*}
  \sup_{M\in\mathbb{N}}\mathbb{E}\left[\exp\left(p\beta\sum_{k=0}^{M-1}|\Delta\overline{N}^M_k|\right)\right]&=&\sup_{M\in\mathbb{N}}\left(\prod_{k=0}^{M-1}\mathbb{E}[\exp(p\beta|\Delta\overline{N}^M_k|)]\right)\\
  &=&\sup_{M\in\mathbb{N}}\left[\left(\mathbb{E}[\exp(p\beta|\Delta\overline{N}^M_k|)]\right)^M\right]\\
  &=&\sup_{M\in\mathbb{N}}\left[\left(\exp\left[\dfrac{(e^{p\beta}+p\beta-1)\lambda T}{M}\right]\right)^M\right]\\
  &=&\exp[ \lambda T\left(e^{p\beta}+p\beta-1\right)]<+\infty,
  \end{eqnarray*}
  for all $p\in[1, +\infty)$.
  \end{proof}
  
  Inspired by   \cite[Lemma 3.5]{Armulf}, we have the following estimation.
  \begin{lemma}
  \label{ch4lemma12}
  [\textbf{Uniformly bounded moments of the process $D_n^M$}]\\
  Let  $D_n^M : \Omega \longrightarrow [0,\infty),\, M\in\mathbb{N},\,\, n\in\{0,1,\cdots, M\}$ be the process  defined in \eqref{notation}, then we have 
  \begin{eqnarray*}
  \sup_{M\in\mathbb{N}, M\geq8\lambda pT}\left\|\sup_{n\in\{0,1,\cdots, M\}}D_n^M\right\|_{L^p(\Omega, \mathbb{R})}<\infty,
  \end{eqnarray*}
  for all $p\in[1,\infty)$. 
  \end{lemma}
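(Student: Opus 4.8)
The plan is to derive a single pointwise upper bound for $\sup_{0\le n\le M}D^M_n$ as a product of a deterministic constant and finitely many nonnegative random variables whose $L^q$-norms are controlled \emph{uniformly in $M$} by the preparatory lemmas \lemref{ch4lemma4}, \lemref{ch4lemma7}, \lemref{ch4lemma10} and \lemref{ch4lemma11}, and then to conclude by the generalized H\"older inequality.

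First I would dispose of the two nested suprema in $D^M_n$. Set $A^M_n:=\sum_{k=0}^{n-1}\alpha^M_k$ and $B^M_n:=\sum_{k=0}^{n-1}\beta^M_k$, with $A^M_0=B^M_0=0$. Since $\|\Delta W^M_k\|^2$ and $|\Delta\overline N^M_k|$ are nonnegative, for every $0\le u\le n\le M$ one has
\[
\sum_{k=u}^{n-1}\Big[\tfrac{3\beta}{2}\|\Delta W^M_k\|^2+\tfrac{3\beta}{2}|\Delta\overline N^M_k|+\alpha^M_k+\beta^M_k\Big]\le \tfrac{3\beta}{2}\sum_{k=0}^{M-1}\|\Delta W^M_k\|^2+\tfrac{3\beta}{2}\sum_{k=0}^{M-1}|\Delta\overline N^M_k|+(A^M_n-A^M_u)+(B^M_n-B^M_u).
\]
Taking $\sup_{0\le u\le n\le M}$, bounding $\sup_{u\le n}(A^M_n-A^M_u)\le \sup_n A^M_n-\inf_n A^M_n$ (and similarly for $B$), and exponentiating yields the pointwise estimate
\[
\sup_{0\le n\le M}D^M_n\le(\beta+\|X_0\|)\,e^{3\beta/2}\,\exp\!\Big(\tfrac{3\beta}{2}\sum_{k=0}^{M-1}\|\Delta W^M_k\|^2\Big)\exp\!\Big(\tfrac{3\beta}{2}\sum_{k=0}^{M-1}|\Delta\overline N^M_k|\Big)\prod_{z\in\{-1,1\}}\Big(\sup_{0\le n\le M}e^{zA^M_n}\Big)\Big(\sup_{0\le n\le M}e^{zB^M_n}\Big).
\]

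Next I would apply the generalized H\"older inequality to the product of the seven random factors on the right-hand side (the eighth, $e^{3\beta/2}$, being deterministic), each raised to the power $7p$, so that $\big\|\sup_n D^M_n\big\|_{L^p(\Omega,\R)}$ is dominated by $e^{3\beta/2}$ times the product of the seven $L^{7p}$-norms. Each of these is finite and bounded uniformly in $M$: the factor $\|\beta+\|X_0\|\|_{L^{7p}}$ is finite by $(A.1)$; $\big\|\exp(\tfrac{3\beta}{2}\sum_k\|\Delta W^M_k\|^2)\big\|_{L^{7p}}$ is bounded uniformly in $M$ by \lemref{ch4lemma4}, provided $M$ exceeds the threshold stated in the lemma (this is exactly where the lower bound on $M$ of the form $M\ge c\,pT$ is inherited from the hypothesis $M\ge 4\beta p'T$ of \lemref{ch4lemma4}); $\big\|\exp(\tfrac{3\beta}{2}\sum_k|\Delta\overline N^M_k|)\big\|_{L^{7p}}$ is bounded uniformly in $M$ by \lemref{ch4lemma11}; the two factors $\big\|\sup_n e^{zA^M_n}\big\|_{L^{7p}}$ are bounded uniformly in $M$ by \lemref{ch4lemma7} (using $7p\ge2$); and the two factors $\big\|\sup_n e^{zB^M_n}\big\|_{L^{7p}}$ are bounded uniformly in $M$ by \lemref{ch4lemma10} (using $7p>1$). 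Multiplying the seven uniform bounds gives $\sup_{M\ \text{large}}\|\sup_n D^M_n\|_{L^p(\Omega,\R)}<\infty$, which is the assertion.

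Since the substantive estimates have already been established in the lemmas from \lemref{ch4lemma4} to \lemref{ch4lemma11}, I do not expect a genuine obstacle here: the proof is essentially assembly and bookkeeping. The two points that require some care are (i) tracking the exponents produced by H\"older and the resulting lower bound on $M$ inherited from \lemref{ch4lemma4}, and (ii) carrying out the reduction of the inner running supremum $\sup_{u\le n}\sum_{k=u}^{n-1}(\alpha^M_k+\beta^M_k)$ to a quotient of the form $\big(\sup_n e^{A^M_n}\big)\big(\sup_n e^{-A^M_n}\big)$, so that the martingale maximal inequalities behind \lemref{ch4lemma7} and \lemref{ch4lemma10} apply directly; I regard (ii) as the only genuinely delicate step.
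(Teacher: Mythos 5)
Your proposal is correct and follows essentially the same route as the paper: a pointwise bound on $\sup_n D_n^M$ obtained by enlarging the inner supremum via $\sup_{u\le n}\sum_{k=u}^{n-1}(\cdot)\le \sup_n\sum_{k=0}^{n-1}(\cdot)+\sup_u\big(-\sum_{k=0}^{u-1}(\cdot)\big)$, followed by the generalized H\"older inequality and the uniform bounds of Lemmas \ref{ch4lemma4}, \ref{ch4lemma7}, \ref{ch4lemma10} and \ref{ch4lemma11}. The only cosmetic difference is that the paper applies H\"older in two stages (five factors with exponents $4p,2p,8p,16p,16p$, then splitting the $\alpha$- and $\beta$-factors at level $32p$) whereas you use seven factors at level $7p$; since the auxiliary lemmas hold for every integrability exponent, this changes nothing.
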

  \begin{proof}
  Recall  that 
  \begin{eqnarray*}
  D_n^M =(\beta+\|X_0\|)\exp\left(\dfrac{3\beta}{2}+\sup_{m\in\{0,\cdots,n\}}\sum_{k=m}^{n-1}\dfrac{3\beta}{2}\|\Delta W^M_k\|^2+\dfrac{3\beta}{2}|\Delta\overline{N}^M_k|+\alpha^M_k+\beta^M_k\right).
  \end{eqnarray*}
  Using Holder inequality, it follows that
  \begin{eqnarray*}
  \sup_{M\in\mathbb{N}, M\geq 8\lambda pT}\left\|\sup_{n\in\{0,\cdots, M\}}D_n^M\right\|_{L^p(\Omega, \mathbb{R}^d)}&\leq &e^{3\beta/2}\left(\beta+\|X_0\|_{L^{4p}(\Omega, \mathbb{R})}\right)\\
  &\times& \underset{M\in\mathbb{N}, M\geq 8\lambda pT}{\sup}\left\|\exp\left(\dfrac{3\beta}{2}\sum_{k=0}^{M-1}\|\Delta W^M_k\|^2\right)\right\|_{L^{2p}(\Omega, \mathbb{R})}\\
  &\times &\sup_{M\in\mathbb{N}}\left\|\exp\left(\dfrac{3\beta}{2}\sum_{k=0}^{M-1}|\Delta\overline{N}_k^M|\right)\right\|_{L^{8p}(\Omega, \mathbb{R})}\\
  &\times &\left(\sup_{M\in\mathbb{N}}\left\|\sup_{n\in\{0,\cdots, M\}}\exp\left(\sup_{m\in\{0,\cdots,n\}}\sum_{k=m}^{n-1}\alpha_k^M\right)\right\|_{L^{16p}(\Omega, \mathbb{R})}\right)\\
  &\times &\left(\sup_{M\in\mathbb{N}}\left\|\sup_{n\in\{0,\cdots, M\}}\exp\left(\sup_{m\in\{0,\cdots,n\}}\sum_{k=m}^{n-1}\beta_k^M\right)\right\|_{L^{16p}(\Omega, \mathbb{R})}\right)\\
  &=& A_1\times A_2\times A_3\times A_4\times A_5.
  \end{eqnarray*}
  By assumption $A_1$ is bounded. Lemma \ref{ch4lemma4} and \ref{ch4lemma11} show that $A_2$ and $A_3$ are bounded.  Using again Holder inequality and Lemma \ref{ch4lemma7} it follows that 
  \begin{eqnarray*}
  A_4=\left\|\sup_{n\in\{0,\cdots, M\}}\exp\left(\sup_{m\in\{0,\cdots,n\}}\sum_{k=m}^{n-1}\alpha_k^M\right)\right\|_{L^{16p}(\Omega, \mathbb{R})}
  \end{eqnarray*}
  \begin{eqnarray*}
   \leq\left\|\sup_{n\in\{0,\cdots, M\}}\exp\left(\sum_{k=0}^{n-1}\alpha^M_k\right)\right\|_{L^{32p}(\Omega,\mathbb{R})}
  \times \left\|\sup_{m\in\{0,\cdots, M\}}\exp\left(-\sum_{k=0}^{m-1}\alpha^M_k\right)\right\|_{L^{32p}(\Omega,\mathbb{R})} <+\infty,
  \end{eqnarray*}
  for all $M\in\mathbb{N}$ and all $p\in[1,\infty)$.
  
  Along the same lines as above,  we prove that $A_5$ is bounded.
  
  Since each of the terms $A_1, A_2, A_3, A_4$ and $A_5$ is bounded, this completes the proof of  Lemma \ref{ch4lemma12}.
  \end{proof}
  
 The following lemma is an extension of \cite[Lemma 3.6]{Armulf}. Here, we include  the jump part.
  \begin{lemma}\label{ch4lemma13}
  Let  $\Omega_M^M\in\mathcal{F},\, M\in \mathbb{N}$ be the process defined in \eqref{notation}. The following holds
  \begin{eqnarray*}
  \sup_{M\in\mathbb{N}}\left(M^p\mathbb{P}[(\Omega_M^M)^c]\right)<+\infty,
  \end{eqnarray*}
  for all $p\in[1,\infty)$.
  \end{lemma}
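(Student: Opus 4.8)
The plan is to dominate $(\Omega^M_M)^c$ by a union of three events and to show that $M^p$ times the probability of each of them stays bounded in $M$ for every $p\in[1,\infty)$. From the definition in \eqref{notation} and subadditivity of $\mathbb{P}$,
\[
\mathbb{P}\big[(\Omega^M_M)^c\big]\leq\mathbb{P}\Big[\sup_{k\in\{0,\cdots,M-1\}} D^M_k>M^{1/2c}\Big]+\sum_{k=0}^{M-1}\mathbb{P}\big[\|\Delta W^M_k\|>1\big]+\sum_{k=0}^{M-1}\mathbb{P}\big[|\Delta\overline{N}^M_k|>1\big],
\]
so it suffices to treat the three summands separately.

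For the first summand I would fix an exponent $q\geq 2cp$ and apply Markov's inequality to the $q$-th power, which gives $\mathbb{P}[\sup_k D^M_k>M^{1/2c}]\leq M^{-q/(2c)}\,\mathbb{E}[(\sup_k D^M_k)^q]$; multiplying by $M^p$ then yields $M^p\,\mathbb{P}[\sup_k D^M_k>M^{1/2c}]\leq M^{p-q/(2c)}\,\|\sup_{k\in\{0,\cdots,M\}} D^M_k\|_{L^q(\Omega,\mathbb{R})}^q\leq\|\sup_{k\in\{0,\cdots,M\}} D^M_k\|_{L^q(\Omega,\mathbb{R})}^q$ as soon as $q\geq 2cp$. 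By \lemref{ch4lemma12} the right-hand side is bounded uniformly over $M\geq 8\lambda qT$, and the finitely many indices $M<8\lambda qT$ contribute only a finite amount; so this summand is controlled for every $p$.

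The Brownian summand is exactly the step already carried out in \cite{lamport194}: since $\Delta W^M_k\sim\mathcal{N}(0,(T/M)I_m)$ one has $\mathbb{E}\|\Delta W^M_k\|^{2r}=c_{r,m}(T/M)^r$ for each $r\in\mathbb{N}$, so Markov's inequality gives $\sum_{k=0}^{M-1}\mathbb{P}[\|\Delta W^M_k\|>1]\leq c_{r,m}T^r M^{1-r}$, and choosing $r\geq p+1$ makes $M^p$ times this bounded; alternatively the Gaussian moment generating function produces a bound of order $M e^{-M/(4T)}$, which dominates every power of $M$.

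The Poisson summand is the genuinely new ingredient and the step I expect to be the main obstacle. Here $\Delta\overline{N}^M_k=\Delta N^M_k-\lambda T/M$ with $\Delta N^M_k\sim\mathrm{Poisson}(\lambda T/M)$, and in contrast to the Gaussian increments a single jump in the $k$-th subinterval already makes $\Delta\overline{N}^M_k$ of order one; however it then gives $|\Delta\overline{N}^M_k|=|1-\lambda T/M|\leq 1$ as soon as $M$ is large, so the event to be controlled is $\{\Delta N^M_k\geq 2\}$. I would bound its probability directly via $\mathbb{P}[\Delta N^M_k\geq 2]=1-e^{-\mu}(1+\mu)\leq\mu^2/2$ with $\mu=\lambda T/M$, or through the Poisson moment generating function of \lemref{ch4lemma8} evaluated at a large parameter, and — to extract a decay in $M$ strong enough to survive summation over the $M$ subintervals and then multiplication by $M^p$ — I would sharpen the crude union bound by conditioning on the total number of jumps $N(T)\sim\mathrm{Poisson}(\lambda T)$, which is almost surely finite with moments of all orders, thereby reducing the bad event to a coincidence among finitely many i.i.d.\ uniform points in $[0,T]$. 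Making this estimate quantitative and uniform in $M$, so that it holds for every $p\in[1,\infty)$, is the technical heart of the argument and the point where the jump setting genuinely departs from the diffusion analysis of \cite{lamport194}.
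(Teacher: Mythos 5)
Your decomposition of $(\Omega^M_M)^c$ into three events, the Markov-inequality-plus-\lemref{ch4lemma12} treatment of the $D^M_k$ term, and the Gaussian-scaling treatment of the Brownian term all coincide with the paper's argument and are fine. The genuine gap is exactly where you suspected it: the Poisson summand is never actually estimated, and the repair you sketch cannot close it, because the quantity to be bounded is genuinely of order $M^{-1}$ and no conditioning changes that. Indeed, for $M>\lambda T$ one has $|\Delta\overline{N}^M_k|>1$ if and only if $\Delta N^M_k\geq 2$, and with $\mu=\lambda T/M$, by independence of the increments,
\begin{eqnarray*}
\mathbb{P}\Big[\sup_{k\in\{0,\cdots,M-1\}}|\Delta\overline{N}^M_k|>1\Big]=1-\big(e^{-\mu}(1+\mu)\big)^M=\frac{\lambda^2T^2}{2M}+O(M^{-2}).
\end{eqnarray*}
This event is contained in $(\Omega^M_M)^c$, so $M^p\,\mathbb{P}[(\Omega^M_M)^c]\geq c\,M^{p-1}$ for large $M$, and the asserted supremum is infinite for every $p>1$ whenever $\lambda>0$. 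Conditioning on $N(T)$ and looking at coincidences of the (conditionally i.i.d.\ uniform) jump times gives the same order $\mathbb{E}\big[\binom{N(T)}{2}\big]/M=\lambda^2T^2/(2M)$, so that route only reproduces the obstruction rather than removing it.

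You should also note that the paper's own proof founders at this same spot: it replaces $M\,\mathbb{P}[|\overline{N}_{T/M}|>1]$ by $M\,\mathbb{P}[|\overline{N}_{T}|>M]$, a rescaling which is exact for Brownian increments (since $W_{T/M}\stackrel{d}{=}M^{-1/2}W_T$) but has no analogue for Poisson increments; the left-hand side is of order $M^{-2}$ per increment while the right-hand side is super-exponentially small, so the inequality goes the wrong way. Your instinct that this is the point where the jump setting genuinely departs from the diffusion analysis of \cite{lamport194} is therefore correct, but the conclusion to draw is that \lemref{ch4lemma13} cannot be proved as stated for $p>1$: one would have to either weaken the threshold in the definition of $\Omega^M_n$ (for instance requiring $|\Delta\overline{N}^M_k|\leq M^{\varepsilon}$, whose failure probability decays faster than any power of $M$), or restrict the claim to $p=1$, where the union bound $M\cdot O(M^{-2})$ does suffice.
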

   \begin{proof}
   Using the subadditivity of the probability measure and the Markov's inequality, it follows that
   \begin{eqnarray*}
   \mathbb{P}[(\Omega_M^M)^c] &\leq & \mathbb{P}\left[\sup_{n\in\{0,\cdots, M-1\}}D_n^M>M^{1/2c}\right]+M\mathbb{P}\left[\|W_{T/M}\|>1\right]+M\mathbb{P}\left[|\overline{N}_{T/M}|>1\right]\nonumber\\
    &\leq & \mathbb{P}\left[\sup_{n\in\{0,\cdots, M-1\}}|D_n^M|^q>M^{q/2c}\right]+M\mathbb{P}\left[\|W_{T}\|>\sqrt{M}\right]+M\mathbb{P}\left[|\overline{N}_{T}|>M\right]\nonumber\\
     &\leq & \mathbb{P}\left[\sup_{n\in\{0,\cdots, M-1\}}|D_n^M|^q>M^{q/2c}\right]+M\mathbb{P}\left[\|W_{T}\|^q>M^{q/2}\right]+M\mathbb{P}\left[|\overline{N}_{T}|^q>M^q\right]\nonumber\\
   &\leq &\mathbb{E}\left[\sup_{n\in\{0,\cdots, M-1\}}|D_n^M|^q\right]M^{-q/2c}+\mathbb{E}[\|W_T\|^q]M^{1-q/2}+ \mathbb{E}[|\overline{N}_{T}|^q] M^{1-q} \nonumber,\\
   \end{eqnarray*}
   for all $q>1$.
   
   Multiplying both sides of the above inequality  by $M^p$ leads to
   \begin{eqnarray*}
   M^p\mathbb{P}[(\Omega_M^M)^c]\leq \mathbb{E}\left[\sup_{n\in\{0,\cdots, M-1\}}|D_n^M|^q\right]M^{p-q/2c}+\mathbb{E}[\|W_T\|^q]M^{p+1-q/2}+\mathbb{E}[|\overline{N}_{T}|^q] M^{p+1-q}
   \end{eqnarray*}
   for all $q>1$.
   
    For $q>\max\{2pc, 2p+2\}$, we have  $M^{p+1-q/2}<1$, $M^{p-q/2c}<1$ and $ M^{p+1-q}$. It follows for this choice of $q$ that 
   \begin{eqnarray*}
   M^p\mathbb{P}[(\Omega_M^M)^c]\leq \mathbb{E}\left[\sup_{n\in\{0,\cdots, M-1\}}|D_n^M|^p\right]+\mathbb{E}[\|W_T\|^q]+\mathbb{E}[|\overline{N}_{T}|^q].
   \end{eqnarray*}
   Using Lemma \ref{ch4lemma12} and the fact that  $W_T$ and  $\overline{N}_{T}$ are independent of $M$, it follows that
   \begin{eqnarray*}
   \sup_{M\in\mathbb{N}}\left(M^p\mathbb{P}[(\Omega_M^M)^c]\right)<+\infty.
   \end{eqnarray*}
   \end{proof}
   
   The following lemma can be found in  \cite[Theorem 48 pp 193]{Protter} or in \cite[Theorem 1.1, pp 1]{Fima}.
\begin{lemma}\label{ch4lemma18b}[Burkholder-Davis-Gundy inequality (BDG)]

 Let $M$ be a martingale with c\`{a}dl\`{a}g paths and let $p\geq 1$ be fixed. Let
   $M_t^*=\sup\limits_{s\leq t}\|M_s\|$. Then there exist constants $c_p$ and $C_p$ such that 
 \begin{eqnarray*}
c_p \left[\mathbb{E}\left([M,M]_t\right)^{p/2}\right]^{1/p}\leq \left[\mathbb{E}(M_t^*)^p\right]^{1/p}\leq C_p\left[\mathbb{E}\left([ M, M]_t\right)^{p/2}\right]^{1/p},
 \end{eqnarray*}
 for all $0\leq t\leq \infty$, where $[M,M]_t$ stand for the quadratic variation of the process $M$. 
\end{lemma}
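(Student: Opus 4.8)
The plan is to establish the two-sided bound through the classical good-$\lambda$ inequality technique, which reduces the general $L^p$ comparison to an elementary $p=2$ base case together with a single stopping-time estimate. Throughout I write $S_t := [M,M]_t^{1/2}$ for the square root of the quadratic variation, so that the assertion is the equivalence of $\mathbb{E}[(M_t^*)^p]$ and $\mathbb{E}[S_t^p]$ up to multiplicative constants. Because the good-$\lambda$ argument is valid for every exponent in $(0,\infty)$, it covers in particular the stated range $p\geq 1$.

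First I would treat the case $p=2$, which needs no good-$\lambda$ machinery. Since $\|M_s\|^2-[M,M]_s$ is a martingale, a localization at $\tau_n := \inf\{s : \|M_s\|\vee[M,M]_s > n\}$ followed by monotone convergence yields the identity $\mathbb{E}[\|M_t\|^2] = \mathbb{E}[[M,M]_t]$. Combining it with Doob's $L^2$ maximal inequality $\mathbb{E}[(M_t^*)^2] \leq 4\,\mathbb{E}[\|M_t\|^2]$ gives both directions for $p=2$ with explicit constants, and supplies the martingale identity reused below.

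The core of the proof is the good-$\lambda$ inequality: for fixed $\beta > 1$ and $\delta \in (0,1)$,
\[
\mathbb{P}\big[M_t^* > \beta\lambda,\ S_t \leq \delta\lambda\big] \leq \frac{\delta^2}{(\beta-1)^2}\,\mathbb{P}\big[M_t^* > \lambda\big], \qquad \lambda > 0.
\]
To prove it I would introduce the stopping times $\tau = \inf\{s : \|M_s\| > \lambda\}$ and $\sigma = \inf\{s : S_s > \delta\lambda\}$ and form the stopped martingale $N_s := M_{s\wedge\sigma} - M_{s\wedge\tau}$, which vanishes on $[0,\tau]$; consequently $\{N_t^* > (\beta-1)\lambda\} \subseteq \{\tau < t\} = \{M_t^* > \lambda\}$, while on the target event the path must climb from $\lambda$ to $\beta\lambda$ after $\tau$, so that $N_t^* > (\beta-1)\lambda$ there. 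The quadratic variation obeys $[N,N]_t = [M,M]_{t\wedge\sigma} - [M,M]_{t\wedge\tau} \leq \delta^2\lambda^2$ up to the single jump at $\sigma$, so a conditional application of Doob's maximal inequality to the submartingale $\|N\|^2$, combined with $\mathbb{E}[\|N_t\|^2 \mid \mathcal{F}_\tau] = \mathbb{E}[[N,N]_t \mid \mathcal{F}_\tau]$, bounds $\mathbb{P}[N_t^* > (\beta-1)\lambda \mid \mathcal{F}_\tau]$ by $\delta^2/(\beta-1)^2$ on $\{\tau<t\}$; integrating over $\{\tau<t\}$ yields the inequality. Then, multiplying by $p\lambda^{p-1}$, integrating in $\lambda$, and using the layer-cake identity $\mathbb{E}[(M_t^*)^p] = p\int_0^\infty \lambda^{p-1}\mathbb{P}[M_t^* > \lambda]\,d\lambda$ together with the substitution $\lambda\mapsto\beta\lambda$ gives, once $\delta$ is chosen so small that $\beta^p\delta^2/(\beta-1)^2 < 1$, the bound $\mathbb{E}[(M_t^*)^p] \leq C_p^p\,\mathbb{E}[S_t^p]$ after absorbing the $\mathbb{E}[(M_t^*)^p]$ term on the right (legitimate once a priori finiteness is secured by the localization $\tau_n$). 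The reverse estimate follows from the symmetric good-$\lambda$ inequality obtained by interchanging the roles of $M_t^*$ and $S_t$.

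The main obstacle is the jump correction intrinsic to the c\`{a}dl\`{a}g setting: the bound $[N,N]_t \leq \delta^2\lambda^2$ can fail by the size of the single jump $\Delta[M,M]_\sigma$ occurring exactly at $\sigma$, and likewise $\|M_\tau\|$ may overshoot $\lambda$ by a jump, so both inequalities feeding the good-$\lambda$ estimate acquire an error of order $\sup_{s\leq t}\|\Delta M_s\|$. I would handle this by first proving the inequality for martingales whose jumps are bounded by a fixed constant, where the correction is absorbable into the choice of $\delta$, thereby obtaining the good-$\lambda$ inequality augmented by a maximal-jump term; the boundedness restriction is then removed by the localization $\tau_n$ together with the pointwise bounds $\sup_{s\leq t}\|\Delta M_s\| \leq S_t$ and $\sup_{s\leq t}\|\Delta M_s\| \leq 2M_t^*$, which let the jump contribution be reabsorbed on either side of the final inequality. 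A secondary point, routine once the jumps are controlled, is the rigorous justification of the conditional step, which rests on optional stopping and the conditional form of Doob's inequality.
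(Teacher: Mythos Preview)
The paper does not prove this lemma at all: immediately before the statement it says that the result ``can be found in \cite[Theorem~48, pp.~193]{Protter} or in \cite[Theorem~1.1, pp.~1]{Burk}'' and then moves on. The authors treat BDG as a black-box tool from the literature, so there is no argument to compare against.

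Your good-$\lambda$ sketch is the standard route and is essentially what the cited references (Protter, and Marinelli--R\"ockner) carry out, so in that sense you are aligned with the paper's implicit choice. The outline is sound: the $p=2$ identity, the stopping-time construction $(\tau,\sigma)$, the stopped martingale $N$, and the layer-cake integration are all in the right place. Your flagged obstacle---the jump overshoot at $\tau$ and at $\sigma$ in the c\`adl\`ag case---is exactly the delicate point, and your proposed remedy (absorb the maximal jump via $\sup_s\|\Delta M_s\|\leq S_t$ and $\sup_s\|\Delta M_s\|\leq 2M_t^*$, then localize) is the standard fix. One small clarification: on the target event $\{S_t\leq\delta\lambda\}$ you actually have $\sigma\geq t$, so the stopping at $\sigma$ is inert there; its role is to make $[N,N]_\infty$ uniformly bounded so that Doob applies to $N$ unconditionally, not merely on the target event. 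With that reading your argument goes through.
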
  
%

The proof of the following lemma can be found  in \cite[Lemma 3.7]{Armulf} or \cite{Mukam}.
  \begin{lemma} \label{ch4lemma15}
  
  Let $k\in\mathbb{N}$ and let $Z : [0,T]\times \Omega \longrightarrow \mathbb{R}^{k\times m}$ be a predictable stochastic process satisfying     
  $\mathbb{P}\left[\int_0^T\|Z_s\|^2ds<+\infty\right]=1$. Then we have the following inequality 
  \begin{eqnarray*}
  \left\|\sup_{s\in[0,t]}\left\|\int_0^sZ_udW_u\right\|\right\|_{L^p(\Omega, \mathbb{R})}\leq C_p\left(\int_0^t\sum_{i=1}^m \|Z_s\vec{e}_i\|^2_{L^p(\Omega, \mathbb{R}^k)}ds\right)^{1/2}
  \end{eqnarray*}
  for all $t\in[0,T]$ and all $p\in[1,\infty)$, where $(\vec{e}_1,\cdots,\vec{e}_m)$ is the canonical basis of $\mathbb{R}^m$.
  \end{lemma}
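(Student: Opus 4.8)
The plan is to view $M_s:=\int_0^s Z_u\,dW_u$ as a continuous $\mathbb{R}^k$-valued martingale, to apply the Burkholder--Davis--Gundy inequality of \lemref{ch4lemma18b} in order to trade the supremum on the left-hand side for the quadratic variation $[M,M]_t$, to identify $[M,M]_t$ explicitly in terms of $Z$, and finally to interchange the $L^p(\Omega)$-norm with both the time integral and the finite sum over the Brownian coordinates by means of Minkowski's integral inequality.

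First I would observe that, since $W$ has continuous paths and $\mathbb{P}[\int_0^T\|Z_s\|^2\,ds<\infty]=1$, the integral $M_s=\sum_{i=1}^m\int_0^s Z_u\vec{e}_i\,dW^{(i)}_u$ is well defined, takes values in $\mathbb{R}^k$ and has continuous sample paths. Localising if necessary so that $M$ is a genuine martingale, \lemref{ch4lemma18b} then yields, for every $p\in[1,\infty)$ and $t\in[0,T]$,
\[
\Big\|\sup_{s\in[0,t]}\|M_s\|\Big\|_{L^p(\Omega,\mathbb{R})}\le C_p\big\|[M,M]_t^{1/2}\big\|_{L^p(\Omega,\mathbb{R})}=C_p\big\|[M,M]_t\big\|_{L^{p/2}(\Omega,\mathbb{R})}^{1/2}.
\]

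Next I would use that the coordinate processes $W^{(1)},\dots,W^{(m)}$ are independent one-dimensional Brownian motions to compute the (trace) quadratic variation of $M$ as $[M,M]_t=\int_0^t\sum_{i=1}^m\|Z_u\vec{e}_i\|^2\,du$, the time integral of the sum of the squared Euclidean norms of the columns of $Z_u$, which is almost surely finite by hypothesis. For $p\ge 2$ we have $p/2\ge 1$, so Minkowski's integral inequality moves the $L^{p/2}(\Omega)$-norm inside the $du$-integral and the triangle inequality in $L^{p/2}(\Omega)$ moves it inside the finite sum over $i$:
\[
\Big\|\int_0^t\sum_{i=1}^m\|Z_u\vec{e}_i\|^2\,du\Big\|_{L^{p/2}(\Omega)}\le\int_0^t\sum_{i=1}^m\big\|\,\|Z_u\vec{e}_i\|^2\,\big\|_{L^{p/2}(\Omega)}\,du=\int_0^t\sum_{i=1}^m\|Z_u\vec{e}_i\|_{L^p(\Omega,\mathbb{R}^k)}^2\,du .
\]
Taking square roots and inserting this into the BDG estimate gives the assertion with the same constant $C_p$ for all $p\in[2,\infty)$; the remaining range $p\in[1,2)$ is handled by the additional argument in \cite[Lemma~3.7]{lamport194} (see also \cite{Mukam}).

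I expect the only genuine obstacle to be the exchange of the $L^p(\Omega)$-norm with the Lebesgue integral in time, and with the finite sum over $i$: this is precisely Minkowski's integral inequality, which is transparent when $p\ge 2$ but degenerates at $p=2$, which is why the estimate is naturally proved in that range first. A minor technical point is that BDG is being applied to a vector-valued martingale; this is legitimate because \lemref{ch4lemma18b} is stated with $M_t^*=\sup_{s\le t}\|M_s\|$, but one could equally apply BDG coordinatewise to $M^{(1)},\dots,M^{(k)}$ and recombine via the equivalence of the Euclidean and $\ell^p$ norms on $\mathbb{R}^k$, the dimensional factor being absorbed into $C_p$.
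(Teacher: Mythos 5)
Your argument is the standard one and coincides with what the paper itself does: the paper offers no proof of this lemma beyond citing \cite[Lemma 3.7]{lamport194}, and its proof of the jump counterpart, \lemref{ch4lemma18}, follows exactly your chain (Burkholder--Davis--Gundy via \lemref{ch4lemma18b}, identification of the quadratic variation $[M,M]_t=\int_0^t\sum_{i=1}^m\|Z_u\vec{e}_i\|^2\,du$, then Minkowski's integral inequality for the exponent $p/2$). For $p\in[2,\infty)$ every step you write is correct.

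The one genuine gap is the range $p\in[1,2)$, which you delegate to ``the additional argument in \cite[Lemma 3.7]{lamport194}''. No such argument exists there: that lemma is stated only for $p\in[2,\infty)$, and the inequality as written actually fails for $p<2$. The obstruction is exactly the one you flag --- Minkowski's integral inequality reverses when $p/2<1$ --- and it is not merely technical. Take $k=m=1$, $t=T=1$ and $Z_s=\sum_{j=1}^{n}n\,\mathbbm{1}_{E_j}\mathbbm{1}_{[(j-1)/n,\,j/n)}(s)$, where $E_1,\dots,E_n$ is an $\mathcal{F}_0$-measurable partition of $\Omega$ with $\mathbb{P}(E_j)=1/n$. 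Then $\int_0^1\|Z_s\|_{L^1(\Omega,\mathbb{R})}^2\,ds=1$, while
\begin{equation*}
\mathbb{E}\Big[\sup_{s\in[0,1]}\Big|\int_0^sZ_u\,dW_u\Big|\Big]\;\geq\;\mathbb{E}\Big|\int_0^1Z_u\,dW_u\Big|=\sum_{j=1}^n\mathbb{P}(E_j)\,n\,\mathbb{E}|W_{1/n}|=\sqrt{2n/\pi}\longrightarrow\infty,
\end{equation*}
so no constant $C_1$ can work. This defect is inherited from the paper's own statement (which also claims $p\in[1,\infty)$) and is harmless for the main theorem, since the lemma is only ever needed with $p\ge 2$ and the case $p\in[1,2]$ of \thmref{ch4theorem1} is recovered at the very end by H\"older's inequality; but you should state and prove the lemma for $p\in[2,\infty)$ rather than appeal to an argument for smaller $p$ that the cited reference does not contain.
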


 The following lemma  can be found in \cite[Lemma 3.8, pp 16]{Armulf} or \cite{Mukam}. 
  \begin{lemma}\label{ch4lemma16} 
  
  Let $k\in\mathbb{N}$ and let $ Z^M_l : \Omega \longrightarrow \mathbb{R}^{k\times m}$, $l\in\{0,1,\cdots, M-1\}$, $M\in\mathbb{N}$ be a familly  of mappings such that $Z^M_l$ is $\mathcal{F}_{lT/M}/\mathcal{B}(\mathbb{R}^{k\times m})$-measurable for all $l\in\{0,1,\cdots,M-1\}$ and $M\in\mathbb{N}$. Then the following inequality holds :
  \begin{eqnarray*}
  \left\|\sup_{j\in\{0,1,\cdots,n\}}\left\|\sum_{l=0}^{j-1}Z_l^M\Delta W^M_l\right\|\right\|_{L^p(\Omega, \mathbb{R})} \leq C_p\left(\sum_{l=0}^{n-1}\sum_{i=1}^m||Z^M_l.\vec{e}_i||^2_{L^p(\Omega, \mathbb{R}^k)}\dfrac{T}{M}\right)^{1/2},
  \end{eqnarray*}
  for all $p\geq 1$.
  \end{lemma}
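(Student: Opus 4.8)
The plan is to deduce this discrete inequality from its continuous-time counterpart, \lemref{ch4lemma15}, by representing the discrete sums as the values at the grid points of a stochastic integral with a step integrand. First I would introduce the piecewise constant, left-continuous process $Z : [0,T]\times\Omega\to\mathbb{R}^{k\times m}$ given by $Z_s := Z^M_l$ for $s\in(lT/M,(l+1)T/M]$, $l\in\{0,1,\dots,M-1\}$ (with $Z_0:=Z^M_0$, which plays no role). Since $Z^M_l$ is $\mathcal{F}_{lT/M}$-measurable, $Z$ is left-continuous and adapted, hence predictable, and since only finitely many $l$ occur its paths are bounded, so $\mathbb{P}\bigl[\int_0^T\|Z_s\|^2\,ds<\infty\bigr]=1$; thus \lemref{ch4lemma15} applies to $Z$. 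Because the It\^{o} integral of a simple integrand is the associated Riemann--It\^{o} sum, one has for every $j\in\{0,1,\dots,M\}$
\[
\int_0^{jT/M} Z_u\,dW_u \;=\; \sum_{l=0}^{j-1} Z^M_l\,\Delta W^M_l .
\]

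Next I would compare the two suprema. Fix $n\in\{0,1,\dots,M\}$ and set $t=nT/M$. For each $j\in\{0,1,\dots,n\}$ the partial sum $\sum_{l=0}^{j-1} Z^M_l\,\Delta W^M_l$ is one of the values $\int_0^{jT/M}Z_u\,dW_u$ attained by the integral on $[0,t]$, so pointwise on $\Omega$
\[
\sup_{j\in\{0,1,\dots,n\}}\Bigl\|\sum_{l=0}^{j-1} Z^M_l\,\Delta W^M_l\Bigr\| \;\le\; \sup_{s\in[0,t]}\Bigl\|\int_0^s Z_u\,dW_u\Bigr\|,
\]
and therefore the same inequality persists for the $L^p(\Omega,\mathbb{R})$ norms. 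Applying \lemref{ch4lemma15} to the right-hand side, and noting that $Z_s\vec{e}_i=Z^M_l\vec{e}_i$ is constant in $s$ on $(lT/M,(l+1)T/M]$, so that
\[
\int_0^{nT/M}\sum_{i=1}^m \|Z_s\vec{e}_i\|^2_{L^p(\Omega,\mathbb{R}^k)}\,ds \;=\; \sum_{l=0}^{n-1}\sum_{i=1}^m \|Z^M_l\vec{e}_i\|^2_{L^p(\Omega,\mathbb{R}^k)}\,\frac{T}{M},
\]
yields precisely the claimed bound with the same constant $C_p$.

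I do not expect a genuine obstacle here: the argument is a routine embedding, and the only points that require a little care are verifying that the step process is a legitimate predictable integrand for \lemref{ch4lemma15} and that the It\^{o} integral of this process evaluated at the grid point $jT/M$ coincides with the discrete partial sum $\sum_{l=0}^{j-1}Z^M_l\Delta W^M_l$ --- both standard facts about It\^{o} integrals of simple processes. If the right-hand side is $+\infty$ the inequality is trivial, so no integrability hypothesis on the $Z^M_l$ beyond the (automatic) condition $\mathbb{P}[\int_0^T\|Z_s\|^2\,ds<\infty]=1$ is needed. One could alternatively prove the estimate directly by applying Doob's maximal inequality together with the Burkholder--Davis--Gundy inequality (\lemref{ch4lemma18b}) to the discrete martingale $j\mapsto\sum_{l=0}^{j-1}Z^M_l\Delta W^M_l$, but the reduction to \lemref{ch4lemma15} is the shortest route given the results already established.
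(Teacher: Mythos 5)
Your proof is correct and is essentially the paper's approach: the paper itself only cites \cite[Lemma 3.8]{lamport194} for this statement, but it carries out exactly your argument for the Poisson analogue (\lemref{ch4lemma19}), namely embedding the discrete sums as grid values of the stochastic integral of the piecewise-constant predictable process and invoking the continuous-time estimate (\lemref{ch4lemma15} here, \lemref{ch4lemma18} there). The only cosmetic difference is that the paper takes the step process constant on $[lT/M,(l+1)T/M)$ rather than on $(lT/M,(l+1)T/M]$, which changes nothing.
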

%
%
%

 \begin{lemma}\label{ch4lemma18}
 Let $k\in\mathbb{N}$ and $Z :[0, T]\times\Omega \longrightarrow \mathbb{R}^k$ be a predictable stochastic process satisfying $\mathbb{P}\left[\int_0^T\|Z_s\|^2ds<+\infty\right]=1$. Then the following inequality holds :
 \begin{eqnarray*}
 \left\|\sup_{s\in[0,T]}\left\|\int_0^sZ_ud\overline{N}_u\right\|\right\|_{L^p(\Omega, \mathbb{R})}\leq C_p\left(\int_0^T \|Z_s\|^2_{L^p(\Omega, \mathbb{R}^k)}ds\right)^{1/2},
 \end{eqnarray*}
 for all $t\in[0, T]$ and all $p\in[1,+\infty)$.
 \end{lemma}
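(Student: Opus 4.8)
The plan is to reduce Lemma~\ref{ch4lemma18} to the Burkholder--Davis--Gundy inequality of Lemma~\ref{ch4lemma18b} applied to the c\`{a}dl\`{a}g martingale $M_s := \int_0^s Z_u\, d\overline{N}_u$, combined with the explicit form of the quadratic variation of the compensated Poisson process. The key identity is that, since $\overline{N}$ is a pure-jump martingale with $[\overline{N},\overline{N}]_t = \lambda t$ (as noted in Section~\ref{setting}), the quadratic variation of the stochastic integral is $[M,M]_t = \int_0^t \|Z_u\|^2\, d[\overline{N},\overline{N}]_u = \lambda\int_0^t \|Z_u\|^2\, du$. Applying the right-hand BDG bound of Lemma~\ref{ch4lemma18b} with the given exponent $p\in[1,\infty)$ gives
\begin{eqnarray*}
\left\|\sup_{s\in[0,T]}\left\|\int_0^s Z_u\, d\overline{N}_u\right\|\right\|_{L^p(\Omega,\mathbb{R})}
\leq C_p\left[\mathbb{E}\left(\lambda\int_0^T \|Z_u\|^2\, du\right)^{p/2}\right]^{1/p}
= C_p\,\lambda^{1/2}\left[\mathbb{E}\left(\int_0^T \|Z_u\|^2\, du\right)^{p/2}\right]^{1/p}.
\end{eqnarray*}

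Next I would pass from the $L^{p/2}$ norm of the random integral $\int_0^T\|Z_u\|^2\,du$ to the stated bound $\big(\int_0^T \|Z_s\|^2_{L^p(\Omega,\mathbb{R}^k)}\,ds\big)^{1/2}$. For $p\ge 2$ this is the Minkowski integral inequality applied in $L^{p/2}(\Omega)$: writing $\phi(u)=\|Z_u\|^2\ge 0$,
\begin{eqnarray*}
\left\|\int_0^T \phi(u)\, du\right\|_{L^{p/2}(\Omega)}
\leq \int_0^T \|\phi(u)\|_{L^{p/2}(\Omega)}\, du
= \int_0^T \|Z_u\|^2_{L^p(\Omega,\mathbb{R}^k)}\, du,
\end{eqnarray*}
and taking square roots yields exactly the claimed inequality after absorbing $\lambda^{1/2}$ into $C_p$. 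For $1\le p<2$ one instead uses H\"{o}lder's inequality on $[0,T]$ (finite interval) to bound $\int_0^T\phi(u)\,du \le T^{1-p/2}\big(\int_0^T \phi(u)^{p/2}\cdot\text{(something)}\big)$ — more directly, monotonicity of $L^q(\Omega)$ norms in $q$ on a probability space gives $\|\cdot\|_{L^{p/2}}\le\|\cdot\|_{L^1}$ when $p/2\le 1$, and then $\mathbb{E}\int_0^T\phi(u)\,du = \int_0^T\mathbb{E}\phi(u)\,du \le \int_0^T\|Z_u\|^2_{L^p(\Omega,\mathbb{R}^k)}\,du$ by Jensen (since $\mathbb{E}\|Z_u\|^2 \le (\mathbb{E}\|Z_u\|^p)^{2/p}$ when $p\le 2$); either way the constant stays dimensionally consistent. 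The localization hypothesis $\mathbb{P}[\int_0^T\|Z_s\|^2\,ds<\infty]=1$ is what guarantees $M$ is a well-defined local martingale so that Lemma~\ref{ch4lemma18b} applies (if the right-hand side is infinite the inequality is trivial).

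The only genuinely delicate point is the identification $[M,M]_t = \lambda\int_0^t\|Z_u\|^2\,du$ and the justification that BDG applies to a vector-valued ($\mathbb{R}^k$-valued) integrand against a scalar martingale; componentwise one has $[M^{(i)},M^{(i)}]_t = \int_0^t (Z_u^{(i)})^2\, d[\overline{N},\overline{N}]_u$ and summing over $i$ gives the stated trace quadratic variation, so the vector case follows from the scalar one with at most a dimension-dependent constant absorbed into $C_p$. I expect this bookkeeping — rather than any deep estimate — to be the main (and only) obstacle; everything else is a direct chain of BDG, the explicit $[\overline{N},\overline{N}]_t=\lambda t$, and Minkowski's integral inequality. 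Note this lemma is the jump analogue of Lemma~\ref{ch4lemma15}, and the proof structure mirrors it exactly, with $\sum_{i=1}^m\|Z_s\vec{e}_i\|^2$ replaced by $\|Z_s\|^2$ because the driving noise is one-dimensional.
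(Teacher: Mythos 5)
Your proof follows the paper's argument essentially verbatim: apply the Burkholder--Davis--Gundy inequality of Lemma~\ref{ch4lemma18b} to the c\`{a}dl\`{a}g martingale $\int_0^{\cdot}Z_u\,d\overline{N}_u$, identify its quadratic variation as $\lambda\int_0^t\|Z_u\|^2\,du$ from $[\overline{N},\overline{N}]_t=\lambda t$, and then pass the $L^{p/2}(\Omega)$ norm inside the time integral by Minkowski's integral inequality; this is exactly the paper's proof and is correct for $p\ge 2$. The one place where you diverge is your attempted patch for $1\le p<2$, and that step is wrong: on a probability space Lyapunov's inequality gives $(\mathbb{E}\|Z_u\|^p)^{1/p}\le(\mathbb{E}\|Z_u\|^2)^{1/2}$ for $p\le 2$, so your claimed bound $\mathbb{E}\|Z_u\|^2\le(\mathbb{E}\|Z_u\|^p)^{2/p}$ is the reverse of what Jensen actually yields, and the chain of inequalities breaks there. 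To be fair, the paper has the same gap, only silently: it invokes integral Minkowski in $L^{p/2}(\Omega,\mathbb{R})$ for all $p\in[1,\infty)$, which is legitimate only when $p/2\ge 1$ (for $0<q<1$ the inequality for nonnegative integrands in fact reverses). Since the lemma is effectively only needed for $p\ge 2$ --- the $p<2$ cases of the moment bounds and of the final convergence estimate all follow afterwards from the $p\ge 2$ case by monotonicity of $L^p$ norms on a probability space --- the honest fix is to state and prove the lemma for $p\ge 2$ rather than to attempt the repair you sketch.
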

 
 \begin{proof}
 Since $\overline{N}$ is a martingale with c\`{a}dl\`{a}g paths satisfying $d[ \overline{N},\overline{N}]_s=dN_s$, it follows from  the property of the quadratic variation (see \cite[(8.21), pp 219]{Fima}) that 
 {\small
 \begin{eqnarray}
 \label{revue}
 \mathbb{E}\left[\int_0^tZ_sd\overline{N}_s, \int_0^tZ_sd\overline{N}_s\right]=\mathbb{E}\left[\int_0^t\Vert Z_s\Vert^2dN_s\right]=\mathbb{E}\left[\int_0^t\Vert Z_s\Vert^2d\overline{N}_s\right]+\lambda \mathbb{E}\left[\int_0^t\Vert Z_s\Vert^2ds\right].
 \end{eqnarray}
 }
 The first term of \eqref{revue} vanishes as the compensated Poisson process is a martingale. Therefore, we have
 \begin{eqnarray}
 \label{ch4Bur1}
 \mathbb{E}\left[\int_0^tZ_sd\overline{N}_s, \int_0^tZ_sd\overline{N}_s\right]=\lambda \mathbb{E}\left[\int_0^t\Vert Z_s\Vert^2ds\right].
 \end{eqnarray}
 The proof follows from BDG inequality and Minkowski's inequality. In fact
 Applying Lemma \ref{ch4lemma18b} with $M_t=\sup\limits_{0\leq t\leq T}\int_0^tZ_sd\overline{N}_s$ and using \eqref{ch4Bur1} leads to
 { \small{
 \begin{eqnarray}
\label{rev1}
 \left[\mathbb{E}\left[\sup_{0\leq t\leq T}\left\|\int_0^tZ_ud\overline{N}_u\right\|^p\right]\right]^{1/p}\leq C_p\left[\mathbb{E}\left(\int_0^T \|Z_s \|^2ds\right)^{p/2}\right]^{1/p},
 \end{eqnarray}
 }}
where $C_p$ is a positive constant depending on $p$ and $\lambda$.

 Using the definition of $\|X\|_{L^p(\Omega, \mathbb{R}^d)}$ for any random variable $X$, it follows from \eqref{rev1} that 
 \begin{eqnarray}
 \left\|\sup_{s\in[0,T]}\left\|\int_0^sZ_ud\overline{N}_u\right\|\right\|_{L^p(\Omega, \mathbb{R})}\leq C_p\left\|\int_0^T\|Z_s\|^2ds\right\|^{1/2}_{L^{p/2}(\Omega, \mathbb{R})}.
 \end{eqnarray}
 Using Minkowski's inequality in its integral form  yields
 \begin{eqnarray*}
\left\|\sup_{s\in[0,T]}\left\|\int_0^sZ_ud\overline{N}_u\right\|\right\|_{L^p(\Omega, \mathbb{R})}&\leq& C_p\left(\int_0^T\left\|\|Z_s\|^2\right\|_{L^{p/2}(\Omega, \mathbb{R})}ds\right)^{1/2}\\
  &=&C_p\left(\int_0^T \|Z_s\|^2_{L^p(\Omega, \mathbb{R}^k)}ds\right)^{1/2}.
 \end{eqnarray*}
 This completes the proof of the lemma.
 \end{proof}

 \begin{lemma}\label{ch4lemma19}
 Let $k\in\mathbb{N}$,  $M\in\mathbb{N}$ and $Z^M_l : \Omega \longrightarrow \mathbb{R}^k, l\in\{0,1,\cdots, M-1\}$ be a family of mappings such that $Z^M_l$ is $\mathcal{F}_{lT/M}/\mathcal{B}(\mathbb{R}^k)$-measurable for all $l\in\{0,1,\cdots, M-1\}$, then  $\forall\; n\in\{0,1\cdots, M\}$ the following inequality holds 
 \begin{eqnarray*}
 \left\|\sup_{j\in\{0,1,\cdots, n\}}\left\|\sum_{l=0}^{j-1}Z^M_l\Delta\overline{N}^M_l\right\|\right\|_{L^p(\Omega, \mathbb{R})}\leq C_p\left(\sum_{j=0}^{n-1}||Z^M_j||^2_{L^p(\Omega, \mathbb{R}^k)}\dfrac{T}{M}\right)^{1/2},
 \end{eqnarray*}
 for all $p\geq 1$, where $C_p$ is a positive constant independent of $M$.
 \end{lemma}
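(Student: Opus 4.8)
\textbf{Proof proposal for Lemma \ref{ch4lemma19}.}

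The plan is to reduce this discrete-sum estimate to the continuous-time estimate of \lemref{ch4lemma18} by a standard step-process construction. First I would define the predictable step process $Z : [0,T]\times\Omega\longrightarrow\mathbb{R}^k$ by setting $Z_s := Z^M_l$ for $s\in(lT/M,(l+1)T/M]$ (and $Z_0:=Z^M_0$), so that $Z_s$ is $\mathcal{F}_s$-adapted and left-continuous, hence predictable, and trivially satisfies $\mathbb{P}[\int_0^T\|Z_s\|^2\,ds<\infty]=1$ since it is a finite sum of bounded-on-each-piece measurable maps. With this choice, for each $j\in\{0,1,\cdots,M\}$ one has the exact identity $\int_0^{jT/M} Z_s\,d\overline{N}_s = \sum_{l=0}^{j-1} Z^M_l\,\Delta\overline{N}^M_l$, because $\overline{N}$ is constant in law-increments on each subinterval and the stochastic integral of an elementary predictable process is precisely the corresponding Riemann-Stieltjes-type sum against the increments $\overline{N}_{(l+1)T/M}-\overline{N}_{lT/M}$.

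Next I would pass to the supremum. Since the process $t\mapsto\int_0^t Z_s\,d\overline{N}_s$ agrees at the grid points $t=jT/M$ with the partial sums, we have
\begin{eqnarray*}
\sup_{j\in\{0,1,\cdots,n\}}\left\|\sum_{l=0}^{j-1}Z^M_l\Delta\overline{N}^M_l\right\| \leq \sup_{s\in[0,T]}\left\|\int_0^s Z_u\,d\overline{N}_u\right\|,
\end{eqnarray*}
so taking $L^p(\Omega,\mathbb{R})$ norms and applying \lemref{ch4lemma18} gives the bound $C_p\left(\int_0^T\|Z_s\|^2_{L^p(\Omega,\mathbb{R}^k)}\,ds\right)^{1/2}$. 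Finally, since $Z_s$ is constant equal to $Z^M_l$ on $(lT/M,(l+1)T/M]$, the integral over $[0,T]$ splits as $\sum_{l=0}^{M-1}\|Z^M_l\|^2_{L^p(\Omega,\mathbb{R}^k)}\cdot\frac{T}{M}$; restricting the relevant piece of this sum to the first $n$ indices (and using $\|Z^M_l\|^2_{L^p}\geq 0$ to drop the tail, noting the supremum on the left only involves $j\le n$, hence only $Z^M_0,\dots,Z^M_{n-1}$) yields exactly the claimed right-hand side $C_p\left(\sum_{j=0}^{n-1}\|Z^M_j\|^2_{L^p(\Omega,\mathbb{R}^k)}\frac{T}{M}\right)^{1/2}$.

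The argument is essentially bookkeeping; the only point requiring a little care is the identification of the elementary stochastic integral against $d\overline{N}$ with the partial sum of increments, which is immediate from the definition of the Itô integral for simple predictable integrands, together with the fact that $\overline{N}$ is a square-integrable martingale so that \lemref{ch4lemma18} applies. One should also be mildly careful that, to bound $\sup_{j\le n}$, it suffices to construct $Z$ only on $[0,nT/M]$ (or equivalently set $Z_s:=0$ for $s>nT/M$), which keeps the right-hand side to the first $n$ terms rather than all $M$; this is the analogue of the reduction used for the Brownian increments in \lemref{ch4lemma16}. This is the main (and only mild) obstacle; everything else is a direct transcription of \lemref{ch4lemma18}.
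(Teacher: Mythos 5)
Your proposal is correct and follows essentially the same route as the paper: both define the piecewise-constant step process $\overline{Z}^M_s:=Z^M_l$ on the $l$-th subinterval, identify the partial sums with the stochastic integral $\int_0^{jT/M}\overline{Z}^M_u\,d\overline{N}_u$ at grid points, bound the discrete supremum by $\sup_{s\in[0,nT/M]}$, apply \lemref{ch4lemma18}, and evaluate the resulting integral as the Riemann sum $\sum_{j=0}^{n-1}\|Z^M_j\|^2_{L^p}\,T/M$. Your remarks on predictability (taking the left-continuous version) and on truncating at $nT/M$ are minor refinements of the same argument, not a different approach.
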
 
 \begin{proof}
 Let us define $\overline{Z}^M : [0, T]\times\Omega\longrightarrow \mathbb{R}^k$ such that $\overline{Z}^M_s := Z^M_l$ for all $s\in\left[\dfrac{lT}{M}, \dfrac{(l+1)T}{M}\right)$, $l\in\{0, 1,\cdots, M-1\}$.
 
 Using the definition of stochastic integral and Lemma \ref{ch4lemma18}, it follows that 
 \begin{eqnarray*}
 \left\|\sup_{j\in\{0,1,\cdots, n\}}\left\|\sum_{l=0}^{j-1}Z^M_l\Delta\overline{N}^M_l\right\|\right\|_{L^p(\Omega, \mathbb{R})}
  &= &\left\|\sup_{j\in\{0,1,\cdots, n\}}\left\|\int_0^{jT/M}\overline{Z}^M_ud\overline{N}^M_u\right\|\right\|_{L^p(\Omega, \mathbb{R})}\\
 &\leq &\left\|\sup_{s\in[0, nT/M]}\left\|\int_0^s\overline{Z}^M_ud\overline{N}_u\right\|\right\|_{L^p(\Omega, \mathbb{R}^k)}\\
 &\leq & C_p\left(\int_0^{nT/M}\|\overline{Z}^M_u\|^2_{L^p(\Omega, \mathbb{R}^k)}ds\right)^{1/2}\\
 &=&C_p\left(\sum_{j=0}^{n-1}||Z_j^M \|^2_{L^p(\Omega,\mathbb{R}^k)}\dfrac{T}{M}\right)^{1/2}.
 \end{eqnarray*}
 This completes the proof of the lemma.
 \end{proof}

   \begin{lemma}
   \label{keylemma}
   Let $Y_n^M : \Omega\longrightarrow \mathbb{R}^d$ be defined by \eqref{tamedjump} for $n\in\{0,\cdots, M\}$ and all $M\in\mathbb{N}$. The following inequality holds
   \begin{eqnarray*}
   \sup_{M\in\mathbb{N}}\sup_{n\in\{0,\cdots, M\}}\mathbb{E}\left[\|Y_n^M\|^p\right]<+\infty
   \end{eqnarray*}
   for all $p\in[1,\infty)$.
   \end{lemma}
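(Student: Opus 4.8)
The plan is to split the $p$-th moment according to the event $\Omega^M_n$ of \eqref{notation}, on which $\|Y^M_n\|$ is dominated by $D^M_n$, and to control the complement $(\Omega^M_n)^c$ — whose probability decays faster than any power of $M$ by \lemref{ch4lemma13} — against a crude, polynomially growing a priori bound. Fix $p\in[1,\infty)$; without loss of generality $p$ is an integer and $M\ge 8\lambda pT$ (the finitely many smaller $M$ contribute a finite constant, since inductively from \eqref{tamedjump} and the Lipschitz growth of $g,h$ each $Y^M_n$ has all moments). For $n\in\{0,\dots,M\}$ I would write
\[
\mathbb{E}\big[\|Y^M_n\|^p\big]=\mathbb{E}\big[\mathbbm{1}_{\Omega^M_n}\|Y^M_n\|^p\big]+\mathbb{E}\big[\mathbbm{1}_{(\Omega^M_n)^c}\|Y^M_n\|^p\big].
\]
By \lemref{ch4lemma2}, $\mathbbm{1}_{\Omega^M_n}\|Y^M_n\|\le D^M_n$, so the first summand is at most $\mathbb{E}\big[\sup_{k\in\{0,\dots,M\}}(D^M_k)^p\big]$, which is finite and bounded uniformly in $M$ by \lemref{ch4lemma12}.

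The core of the argument is the crude estimate: there exist $C_p>0$ and an exponent $\gamma_p>0$ (one may take $\gamma_p=p$) such that $\sup_{n\le M}\mathbb{E}\big[\|Y^M_n\|^{2p}\big]\le C_p(1+M)^{\gamma_p}$ for every $M$. I would prove this by iterating a one-step inequality built from \eqref{ys}. Writing the tamed drift increment as $b^M_n$ (so $\|b^M_n\|\le 1$), the key points are: (i) using the one-sided Lipschitz condition for $f_\lambda$ and the fact that $\tfrac{\Delta t}{1+\Delta t\|f_\lambda(x)\|}\le\Delta t=\tfrac{T}{M}$ (distinguishing the sign of $\langle Y^M_n,f_\lambda(Y^M_n)\rangle$), one gets $\langle Y^M_n,b^M_n\rangle\le\tfrac{T}{M}\widetilde C(1+\|Y^M_n\|^2)$; (ii) the global Lipschitz bounds give $\|g(x)\|^2\vee\|h(x)\|^2\le K(1+\|x\|^2)$; (iii) $\Delta W^M_n$ and $\Delta\overline N^M_n$ are independent of $\mathcal F_{nT/M}$ and centred, so the stochastic cross-terms vanish in expectation while $\mathbb{E}[\|g(Y^M_n)\|^2\|\Delta W^M_n\|^2]=\tfrac{mT}{M}\mathbb{E}\|g(Y^M_n)\|^2$ and $\mathbb{E}[\|h(Y^M_n)\|^2|\Delta\overline N^M_n|^2]=\tfrac{\lambda T}{M}\mathbb{E}\|h(Y^M_n)\|^2$. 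Together these give $\mathbb{E}\|Y^M_{n+1}\|^2\le(1+\tfrac{c_1}{M})\mathbb{E}\|Y^M_n\|^2+(c_2+\tfrac{c_3}{M})$, whence $\mathbb{E}\|Y^M_n\|^2\le C(1+M)$ by the discrete Gronwall lemma. For higher powers one expands $\|Y^M_{n+1}\|^{2p}=(\|Y^M_n\|^2+R^M_n)^{p}$ binomially; every term carrying a Brownian or compensated-Poisson increment contributes, after taking expectations and using $\mathbb{E}\|\Delta W^M_n\|^{2r}=O(M^{-r})$ and $\mathbb{E}|\Delta\overline N^M_n|^{2r}=O(M^{-1})$, a factor $O(M^{-1})$ times moments of order $\le 2p$, whereas the finitely many terms free of increments are $O(1)$ times moments of order $\le 2p-2$, controlled polynomially in $M$ by induction on $p$; the discrete Gronwall lemma then yields the stated bound.

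Finally, by the Cauchy–Schwarz inequality,
\[
\mathbb{E}\big[\mathbbm{1}_{(\Omega^M_n)^c}\|Y^M_n\|^p\big]\le\big(\mathbb{P}[(\Omega^M_n)^c]\big)^{1/2}\big(\mathbb{E}\|Y^M_n\|^{2p}\big)^{1/2}.
\]
Since $\Omega^M_n$ is decreasing in $n$, $(\Omega^M_n)^c\subseteq(\Omega^M_M)^c$, and by \lemref{ch4lemma13}, $\sup_M M^q\,\mathbb{P}[(\Omega^M_M)^c]<\infty$ for every $q\ge1$. Choosing $q>\gamma_p$ and combining with the a priori bound makes this term $O\big(M^{(\gamma_p-q)/2}\big)$, hence bounded in $M$ and $n$; adding the two contributions proves $\sup_{M\in\mathbb{N}}\sup_{n\le M}\mathbb{E}\|Y^M_n\|^p<\infty$. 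I expect the delicate point to be step (i) of the a priori bound: the drift contribution per step must be kept of size $O(1/M)$ by exploiting the taming denominator together with the one-sided Lipschitz condition — bounding $\langle Y^M_n,b^M_n\rangle$ merely by $\|Y^M_n\|$, or running a Gronwall argument in $\|\Delta W^M_n\|$ rather than $\|\Delta W^M_n\|^2$, produces a crude bound growing faster than any polynomial in $M$, which the (only super-polynomial, not super-exponential) decay of $\mathbb{P}[(\Omega^M_M)^c]$ cannot absorb.
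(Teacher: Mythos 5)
Your proof is correct and its architecture coincides with the paper's: the same good-set/bad-set decomposition via $\Omega^M_n$, the same use of \lemref{ch4lemma2} and \lemref{ch4lemma12} to control $\mathbbm{1}_{\Omega^M_n}\|Y^M_n\|$ through $D^M_n$, and the same use of \lemref{ch4lemma13} (super-polynomial decay of $\mathbb{P}[(\Omega^M_M)^c]$, with monotonicity of the $\Omega^M_n$) to absorb a merely polynomial-in-$M$ a priori bound on the bad set. Where you genuinely diverge is in how that crude a priori bound is produced. The paper writes $Y^M_n$ as $X_0$ plus the accumulated tamed drift (bounded crudely by $M$) plus two martingale sums, and applies the discrete Burkholder--Davis--Gundy-type estimates of \lemref{ch4lemma16} and \lemref{ch4lemma19} together with a Gronwall argument at the level of $L^p$-norms, obtaining $\sup_n\|Y^M_n\|_{L^p}\leq C_p(1+M)$ uniformly in $p$ in one pass (see \eqref{ch4MB4}). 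You instead run a one-step conditional-expectation recursion on $\mathbb{E}\|Y^M_n\|^{2p}$, killing the martingale cross-terms by centering and independence of the increments, exploiting the taming denominator and the one-sided Lipschitz condition to keep the drift contribution at $O(1/M)$ per step, and inducting on the moment order; this is more elementary (no BDG machinery is needed for this part) at the cost of a bookkeeping-heavy binomial expansion and a polynomial exponent $\gamma_p$ that grows with $p$ rather than the paper's clean linear bound --- a difference that is immaterial here precisely because \lemref{ch4lemma13} gives decay faster than any polynomial. Your closing remark correctly identifies the delicate point: any per-step drift estimate worse than $O(1/M)$ would produce super-polynomial growth that the bad-set probability could not absorb, which is exactly why both proofs must use the taming denominator together with the one-sided Lipschitz condition at this stage.
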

   \begin{proof}
   Let us first represent the numerical approximation $Y^M_n$ in the following appropriate form 
   \begin{eqnarray*}
   Y^M_n&=&Y_{n-1}^M+\dfrac{\Delta tf_{\lambda}(Y^M_{n-1})}{1+\Delta t^{\alpha}\|f_{\lambda}(Y^M_{n-1})\|}+g(Y_{n-1})\Delta W^M_{n-1}+h(Y^M_{n-1})\Delta\overline{N}^M_{n-1}\\
   &=&X_0+\sum_{k=0}^{n-1}\dfrac{\Delta t f_{\lambda}(Y_k^M)}{1+\Delta t^{\alpha} \|f_{\lambda}(Y^M_k)\|}+\sum_{k=0}^{n-1}g(Y^M_k)\Delta W^M_k+\sum_{k=0}^{n-1}h(Y^M_k)\Delta\overline{N}^M_k\\
   &=& X_0+ \sum_{k=0}^{n-1}g(0)\Delta W^M_k+\sum_{k=0}^{n-1}h(0)\Delta\overline{N}^M_k+\sum_{k=0}^{n-1}\dfrac{\Delta tf_{\lambda}(Y^M_{n-1})}{1+\Delta t^{\alpha}\|f_{\lambda}(Y^M_{n-1})\|}\\
   &+&\sum_{k=0}^{n-1}(g(Y^M_k)-g(0))\Delta W^M_k+\sum_{k=0}^{n-1}(h(Y^M_k)-h(0))\Delta\overline{N}^M_k,
   \end{eqnarray*}
for all $M\in\mathbb{N}   $ and all $n\in\{0,\cdots,M\}$.

As $\alpha \in [1/2,1]$, using the inequality 
\begin{eqnarray*}
\left\|\dfrac{\Delta tf_{\lambda}(Y^M_k)}{1+\Delta t^{\alpha}\|f_{\lambda}(Y^M_k)\|}\right\|_{L^p(\Omega, \mathbb{R}^d)} <\Delta t^{1-\alpha}<T^{1-\alpha},
\end{eqnarray*} 
it follows that 
\begin{eqnarray*}
\|Y^M_n\|_{L^p(\Omega, \mathbb{R}^d)}  &\leq &\|X_0\|_{L^p(\Omega,\mathbb{R}^d)}+\left\|\sum_{k=0}^{n-1}g(0)\Delta W^M_k\right\|_{L^p(\Omega, \mathbb{R}^d)}+\left\|\sum_{k=0}^{n-1}h(0)\Delta\overline{N}^M_k\right\|_{L^p(\Omega,\mathbb{R}^d)}+MT^{1-\alpha}\\
&+&\left\|\sum_{k=0}^{n-1}(g(Y^M_k)-g(0))\Delta W^M_k\right\|_{L^p(\Omega,\mathbb{R}^d)}+\left\|\sum_{k=0}^{n-1}(h(Y^M_k)-h(0))\Delta\overline{N}^M_k\right\|_{L^p(\Omega,\mathbb{R}^d)}.
\end{eqnarray*}
Using Lemma \ref{ch4lemma16} and Lemma \ref{ch4lemma19}, it follows that 
\begin{eqnarray}
\|Y^M_n\|_{L^p(\Omega,\mathbb{R}^d)} &\leq &\|X_0\|_{L^p(\Omega,\mathbb{R}^d)}+C_p\left(\sum_{k=0}^{n-1}\sum_{i=1}^{m}\|g_i(0)\|^2\dfrac{T}{M}\right)^{1/2}+C_p\left(\sum_{k=0}^{n-1}\|h(0)\|^2\dfrac{T}{M}\right)^{1/2}\nonumber\\
&+&MT^{1-\alpha}+ C_p\left(\sum_{k=0}^{n-1}\sum_{i=1}^m\|(g_i(Y_k^M)-g_i(0))\Delta W^M_k\|^2_{L^p(\Omega, \mathbb{R}^d)}\dfrac{T}{M}\right)^{1/2}\nonumber\\ &+&C_p\left(\sum_{k=0}^{n-1} \|(h(Y_k^M)-h(0))\Delta\overline{N}^M_k \|^2_{L^p(\Omega, \mathbb{R}^d)}\dfrac{T}{M}\right)^{1/2}\nonumber\\
&\leq&\|X_0\|_{L^p(\Omega, \mathbb{R}^d)}+C_p\left(\dfrac{nT}{M}\sum_{i=1}^m \|g_i(0)\|^2\right)^{1/2}+C_p\left(\dfrac{nT}{M}||h(0)||^2\right)^{1/2}\nonumber\\
 &+&MT^{1-\alpha}+C_p\left(\sum_{k=0}^{n-1}\sum_{i=1}^m \|g_i(Y^M_k)-g_i(0)\|^2_{L^p(\Omega,\mathbb{R}^d)}\dfrac{T}{M}\right)^{1/2}\nonumber\\
&+&C_p\left(\sum_{k=0}^{n-1}\|h(Y^M_k)-h(0)\|^2_{L^p(\Omega,\mathbb{R}^d)}\dfrac{T}{M}\right)^{1/2}.
\label{ch4MB1}
\end{eqnarray}
From $\|g_i(0)\|^2\leq \|g(0)\|^2$  and the global Lipschitz condition satisfied by   $g$ and $h$, we obtain 
\begin{eqnarray*}
 \|g_i(Y^M_k)-g_i(0)\|_{L^p(\Omega, \mathbb{R}^d)}\leq  C\|Y^M_k\|_{L^p(\Omega,\mathbb{R}^d)}\\
\|h(Y^M_k)-h(0)\|_{L^p(\Omega, \mathbb{R}^d)}\leq  C\|Y^M_k\|_{L^p(\Omega,\mathbb{R}^d)}.
\end{eqnarray*}
So using \eqref{ch4MB1}, we obtain 
\begin{eqnarray*}
\|Y^M_n\|_{L^p(\Omega, \mathbb{R}^d)} &\leq & \|X_0\|_{L^p(\Omega,\mathbb{R}^d)}+C_p\sqrt{Tm}\|g(0)\|+C_p\sqrt{T}\|h(0)\|+MT^{1-\alpha}\\
&+&C_p\left(\dfrac{Tm}{M}\sum_{k=0}^{n-1}\|Y^M_k\|^2_{L^p(\Omega,\mathbb{R}^d)}\right)^{1/2}
+C_p\left(\dfrac{T}{M}\sum_{k=0}^{n-1}\|Y^M_k\|^2_{L^p(\Omega,\mathbb{R}^d)}\right)^{1/2}.
\end{eqnarray*}
Using the inequality $(a+b+c)^2\leq 3a^2+3b^2+3c^2$, it follows that :
\begin{eqnarray*}
\|Y^M_n\|^2_{L^p(\Omega,\mathbb{R}^d)}&\leq& 3\left(\|X_0\|_{L^p(\Omega,\mathbb{R}^d)}+C_p\sqrt{Tm}\|g(0)\|+C_p\sqrt{T}\|h(0)\|+MT^{1-\alpha}\right)^2\nonumber\\
&+&\dfrac{3T(C_p\sqrt{m}+C_p)^2}{M}\sum_{k=0}^{n-1} \|Y^M_k\|^2_{L^p(\Omega, \mathbb{R}^d)},
\label{ch4MB2}
\end{eqnarray*}
for all $p\in[1,\infty)$. Using the fact that 
$
\dfrac{3T(C_p\sqrt{m}+C_p)^2}{M}<3 T(C_p\sqrt{m}+C_p)^2
$  we obtain the following estimation
\begin{eqnarray}
\|Y^M_n\|^2_{L^p(\Omega,\mathbb{R}^d)}&\leq& 3 \left(\|X_0\|_{L^p(\Omega,\mathbb{R}^d)}+C_p\sqrt{Tm}\|g(0)\|+C_p\sqrt{T}\|h(0)\|+MT^{1-\alpha}\right)^2\nonumber\\
&+&3 T(C_p\sqrt{m}+C_p)^2\sum_{k=0}^{n-1}\|Y^M_k\|^2_{L^p(\Omega, \mathbb{R}^d)},
\label{ch4MB}
\end{eqnarray}

Applying  Gronwall lemma to \eqref{ch4MB} leads to
{\small {
\begin{eqnarray}
\|Y^M_n\|^2_{L^p(\Omega, \mathbb{R}^d)}\leq e^{3 T(C_p\sqrt{m}+C_p)^2}\left(\|X_0\|_{L^p(\Omega,\mathbb{R}^d)}+C_p\sqrt{Tm}\|g(0)\|+C_p\sqrt{T}\|h(0)\|+MT^{1-\alpha}\right)^2.
\label{ch4MB3}
\end{eqnarray}
}}
Taking the square root and the supremum in the both sides of \eqref{ch4MB3} leads to
{\small {
\begin{eqnarray}
\lefteqn {\sup\limits_{n\in\{0,\cdots, M\}}\|Y^M_n\|_{L^p(\Omega, \mathbb{R}^d)}}&& \nonumber\\
&\leq&  2e^{3 T(C_p\sqrt{m}+C_p)^2}\left(\|X_0\|_{L^p(\Omega,\mathbb{R}^d)}+C_p\sqrt{Tm}\|g(0)\|+C_p\sqrt{T}\|h(0)\|+MT^{1-\alpha}\right)
\label{ch4MB4}
\end{eqnarray}
}}
Unfortunately, \eqref{ch4MB4} is not enough to conclude the proof of the lemma due to the term  $M$ in the right hand side. Using the fact that $(\Omega_n^M)_n$ is a decreasing sequence and by using Holder's inequality, we obtain :
\begin{eqnarray}
\sup_{M\in\mathbb{N}}\sup_{n\in\{0,\cdots,M\}}\left\|\mathbbm{1}_{(\Omega_n^M)^c}Y^M_n\right\|_{L^p(\Omega,\mathbb{R}^d)}& \leq &\sup_{M\in\mathbb{N}}\sup_{n\in\{0,\cdots,M\}}\left\|\mathbbm{1}_{(\Omega^M_M)^c}\right\|_{L^{2p}(\Omega,\mathbb{R})}\left\|Y^M_n\right\|_{L^{2p}(\Omega,\mathbb{R}^d)}\nonumber\\
&\leq &\left(\sup_{M\in\mathbb{N}}\sup_{n\in\{0,\cdots,M\}}\left(M\left\|\mathbbm{1}_{(\Omega^M_M)^c}\right\|_{L^{2p}(\Omega,\mathbb{R})}\right)\right)\nonumber\\
&\times &\left(\sup_{M\in\mathbb{N}}\sup_{n\in\{0,\cdots,M\}}\left(M^{-1} \|Y^M_n\|_{L^{2p}(\Omega,\mathbb{R}^d)}\right)\right).
\label{ch4MB5}
\end{eqnarray}
Using  inequality \eqref{ch4MB4} yields

$
\left(\sup\limits_{M\in\mathbb{N}}\sup\limits_{n\in\{0,\cdots,M\}}\left(M^{-1}\|Y^M_n\|_{L^{2p}(\Omega,\mathbb{R}^d)}\right)\right)$
\begin{eqnarray}
\leq 2 e^{3(C_p\sqrt{m}+C_p)^2}\left(\dfrac{\|X_0\|_{L^{2p}(\Omega, \mathbb{R}^d)}}{M}+\dfrac{C_p\sqrt{Tm}\|g(0)\|+C_p\sqrt{T}\|h(0)\|}{M}+T^{1-\alpha}\right)\nonumber\\
\leq 2 e^{3(C_p\sqrt{m}+C_p)^2}\left(\|X_0\|_{L^{2p}(\Omega,\mathbb{R}^d)}+C_p\sqrt{Tm}\|g(0)\|+C_p\sqrt{T}\|h(0)\|+T^{1-\alpha}\right)<+\infty,
\label{ch4MB6}
\end{eqnarray}
for all $p\geq 1$.
From the relation 
\begin{eqnarray*}
\left\|\mathbbm{1}_{(\Omega^M_M)^c}\right\|_{L^{2p}(\Omega, \mathbb{R})}=
\mathbb{E}\left[\mathbbm{1}_{(\Omega^M_M)^c}\right]^{1/2p}=
\mathbb{P}\left[(\Omega^M_M)^c\right]^{1/2p}
\end{eqnarray*}
it follows using Lemma  \ref{ch4lemma13} that
\begin{eqnarray}
\sup_{M\in\mathbb{N}}\sup_{n\in\{0,\cdots, M\}}\left(M\left\|\mathbbm{1}_{(\Omega^M_M)^c}\right\|_{L^{2p}(\Omega,\mathbb{R}}\right)=\sup_{M\in\mathbb{N}}\sup_{n\in\{0,\cdots, M\}}\left(M^{2p}\mathbb{P}\left[(\Omega^M_M)^c\right]\right)^{1/2p}<+\infty,
\label{ch4MB7}
\end{eqnarray}
for all $p\geq 1$. 

So  plugging  \eqref{ch4MB6} and \eqref{ch4MB7} in \eqref{ch4MB5}  leads to 
\begin{eqnarray}
\sup_{M\in\mathbb{N}}\sup_{n\in\{0,\cdots, M\}}\left\|\mathbbm{1}_{(\Omega^M_n)^c}Y^M_n\right\|_{L^p(\Omega,\mathbb{R})}<+\infty.
\label{ch4MB8}
\end{eqnarray}
Futhermore, we have 
\begin{eqnarray}
\sup_{M\in\mathbb{N}}\sup_{n\in\{0,\cdots, M\}}\left\|Y^M_n\right\|_{L^p(\Omega,\mathbb{R}^d)} &\leq &\sup_{M\in\mathbb{N}}\sup_{n\in\{0,\cdots, M\}}\left\|\mathbbm{1}_{(\Omega^M_n)}Y^M_n\right\|_{L^p(\Omega,\mathbb{R}^d)}\nonumber\\
&+&\sup_{M\in\mathbb{N}}\sup_{n\in\{0,\cdots, M\}}\left\|\mathbbm{1}_{(\Omega^M_n)^c}Y^M_n\right\|_{L^p(\Omega,\mathbb{R}^d)}.
\label{ch4MB9}
\end{eqnarray}
From  \eqref{ch4MB8}, the second term  of inequality \eqref{ch4MB9} is bounded, while using Lemma \ref{ch4lemma2} and Lemma \ref{ch4lemma12} we  have
\begin{eqnarray}
\sup_{M\in\mathbb{N}}\sup_{n\in\{0,\cdots, M\}}\left\|\mathbbm{1}_{(\Omega^M_n)}Y^M_n\right\|_{L^p(\Omega,\mathbb{R}^d)}\leq \sup_{M\in\mathbb{N}}\sup_{n\in\{0,\cdots, M\}}\left\|D_n^M\right\|_{L^p(\Omega,\mathbb{R})}<+\infty.
\label{ch4MB10}
\end{eqnarray}
Finally plugging \eqref{ch4MB8} and \eqref{ch4MB10} in \eqref{ch4MB9} leads to
\begin{eqnarray*}
\sup_{M\in\mathbb{N}}\sup_{n\in\{0,\cdots, M\}}\left\|Y^M_n\right\|_{L^p(\Omega,\mathbb{R}^d)}<+\infty.
\end{eqnarray*}
\end{proof}

\begin{lemma} \label{ch4lemma21}
Let $Y_n^M$ be defined by \eqref{tamedjump} for all $M\in\mathbb{N}$ and all $n\in\{0,1,\cdots, M\}$, then we have 
\begin{eqnarray*}
\sup_{M\in\mathbb{N}}\sup_{n\in\{0,1,\cdots, M\}}\left(\mathbb{E}\left[\|f_{\lambda}(Y_n^M)\|^p\right]\vee \mathbb{E}\left[\left\|g(Y_n^M)\right\|^p\right]\vee \mathbb{E}\left[\left\|h(Y_n^M)\right\|^p\right]\right)<+\infty,
\end{eqnarray*}
for all $p\in[1,\infty)$.
\end{lemma}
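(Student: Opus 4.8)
The plan is to reduce everything to \lemref{keylemma}, which already provides uniformly bounded $p$-th moments of $Y_n^M$ for every $p\in[1,\infty)$, and to exploit the growth bounds on the coefficient functions coming from \assref{ass1}.

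First I would dispose of the $g$- and $h$-terms, which is immediate: by the global Lipschitz condition $(A.2)$ one has $\|g(x)\|\le\|g(x)-g(0)\|+\|g(0)\|\le C\|x\|+\|g(0)\|$ and likewise $\|h(x)\|\le C\|x\|+\|h(0)\|$ for all $x\in\mathbb{R}^d$. Raising to the power $p$, using $(a+b)^p\le 2^{p-1}(a^p+b^p)$ and taking expectations,
\begin{eqnarray*}
\mathbb{E}\left[\|g(Y_n^M)\|^p\right]\le 2^{p-1}\left(C^p\,\mathbb{E}\left[\|Y_n^M\|^p\right]+\|g(0)\|^p\right),
\end{eqnarray*}
and analogously for $h$; taking the supremum over $M\in\mathbb{N}$ and $n\in\{0,\dots,M\}$ and invoking \lemref{keylemma} with exponent $p$ yields the bounds for these two terms.

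Next I would treat $f_\lambda$, where the superlinear growth makes the argument slightly less direct. Since $f_\lambda=f+\lambda h$ satisfies the superlinear growth condition $(A.4)$ (as observed right after \eqref{modeln}), choosing $y=0$ there gives
\begin{eqnarray*}
\|f_\lambda(x)\|\le\|f_\lambda(0)\|+C\left(1+\|x\|^c\right)\|x\|\le\|f_\lambda(0)\|+C\|x\|+C\|x\|^{c+1},
\end{eqnarray*}
for all $x\in\mathbb{R}^d$. Hence, using $(a+b+d)^p\le 3^{p-1}(a^p+b^p+d^p)$ and taking expectations,
\begin{eqnarray*}
\mathbb{E}\left[\|f_\lambda(Y_n^M)\|^p\right]\le 3^{p-1}\left(\|f_\lambda(0)\|^p+C^p\,\mathbb{E}\left[\|Y_n^M\|^p\right]+C^p\,\mathbb{E}\left[\|Y_n^M\|^{p(c+1)}\right]\right).
\end{eqnarray*}
The first term is a constant, while the remaining two are controlled, uniformly in $M$ and $n$, by \lemref{keylemma} applied with the exponents $p$ and $p(c+1)$ respectively. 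Taking $\sup_{M\in\mathbb{N}}\sup_{n\in\{0,\dots,M\}}$ and combining with the $g$- and $h$-estimates then completes the proof.

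The only point requiring a moment of care is that, for $f_\lambda$, the needed moment bound on $Y_n^M$ is at the enlarged exponent $p(c+1)$ rather than at $p$; this is harmless because $c>0$ forces $p(c+1)\in[1,\infty)$ and \lemref{keylemma} is valid for all such exponents. Thus there is no genuine obstacle: the statement is essentially a corollary of \lemref{keylemma} together with the growth hypotheses in \assref{ass1}.
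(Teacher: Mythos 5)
Your proposal is correct and follows essentially the same route as the paper: bound $\|g(x)\|$ and $\|h(x)\|$ linearly via the global Lipschitz condition, bound $\|f_\lambda(x)\|$ by a constant times $1+\|x\|^{c+1}$ via the superlinear growth condition with $y=0$, and then invoke \lemref{keylemma} at the exponents $p$ and $p(c+1)$. The only cosmetic difference is that the paper merges the $\|x\|$ and $\|x\|^{c+1}$ terms into a single $(1+\|x\|^{c+1})$ factor through a case split on $\|x\|\le 1$ versus $\|x\|\ge 1$, whereas you keep the three terms separate; both are equally valid.
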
   

\begin{proof}
As $f_{\lambda}$ satisfies the polynomial growth condition,  for all $x\in\mathbb{R}^d$ we have 
\begin{eqnarray*}
\|f_{\lambda}(x)\|\leq C(K+\|x\|^c)\|x\|+\|f_{\lambda}(0)\|=C K\|x\|+C\|x\|^{c+1}+\|f_{\lambda}(0)\|.
\end{eqnarray*}
$\bullet$ If $\|x\|\leq 1$, then $C K\|x\|\leq CK$, hence 
\begin{eqnarray}
\|f_{\lambda}(x)\|&\leq& CK+C\|x\|^{c+1}+\|f_{\lambda}(0)\|\nonumber\\
&\leq & KC+KC\|x\|^{c+1}+C+C\|x\|^{c+1}+\|f_{\lambda}(0)\|+\|f_{\lambda}(0)\|x\|^{c+1}\nonumber\\
&=&(KC+C+\|f_{\lambda}(0)\|)(1+\|x\|^{c+1}).
\label{ch4eq1}
\end{eqnarray}
$\bullet$ If $\|x\|\geq 1$, then $\|x\|\leq \|x\|^{c+1}$,  hence
\begin{eqnarray}
\|f_{\lambda}(x)\|&\leq & KC\|x\|^{c+1}+C\|x\|^{c+1}+\|f_{\lambda}(0)\|\nonumber\\
&\leq & KC+KC\|x\|^{c+1}+C+C\|x\|^{c+1}+\|f_{\lambda}(0)\|+\|f_{\lambda}(0)\|\|x\|^{c+1}\nonumber\\
&=&(KC+C+\|f_{\lambda}(0)\|)(1+\|x\|^{c+1}).
\label{ch4eq2}
\end{eqnarray}
So it follows from \eqref{ch4eq1} and \eqref{ch4eq2} that 
\begin{eqnarray}
\|f_{\lambda}(x)\| \leq (KC+C+\|f_{\lambda}(0)\|)(1+\|x\|^{c+1}), \hspace{0.5cm} \text{for all} \hspace{0.3cm} x\in\mathbb{R}^d.
\label{ch4eq3}
\end{eqnarray}
Using  inequality \eqref{ch4eq3} and \lemref{keylemma}, it follows that
\begin{eqnarray*}
\sup_{M\in\mathbb{N}}\sup_{n\in\{0,\cdots,M\}}\left\|f_{\lambda}(Y_n^M)\right\|_{L^p(\Omega, \mathbb{R}^d)}&\leq& (KC+C+||f_{\lambda}(0)||)\\
&\times&\left(1+\sup_{M\in\mathbb{N}}\sup_{n\in\{0,\cdots, M\}}\left\|Y^M_n\right\|^{c+1}_{L^{p(c+1)}(\Omega, \mathbb{R}^d)}\right)\\
&<&+\infty,
\end{eqnarray*}
for all $p\in[1,\infty)$. In other hand, using the global Lipschitz condition satisfied by   $g$ and $h$, it follows that 
\begin{eqnarray}
\|g(x)\| \leq C\|x\|+ \|g(0)\| \hspace{0.2cm} \text{and}\hspace{0.2cm} \|h(x)\|\leq C\|x\|+\|h(0\|.
\label{ch4stron2}
\end{eqnarray}

Using once again   \lemref{keylemma}, it follows from \eqref{ch4stron2} that 
\begin{eqnarray*}
 \sup_{M\in\mathbb{N}, n\in\{0, \cdots, M\}}\left\|g(Y^M_n)\right\|_{L^p(\Omega, \mathbb{R}^d)}\leq\|g(0)\|+C\sup_{M\in\mathbb{N}}\sup_{n\in\{0,\cdots, M\}}\left\|Y^M_n\right\|_{L^p(\Omega, \mathbb{R}^d)}<+\infty,
\end{eqnarray*}
for all $p\in[1,\infty)$. Using the same argument as for $g$ the following holds 
\begin{eqnarray*}
\sup_{M\in\mathbb{N}}\sup_{n\in\{0,\cdots, M\}}\left\|h(Y^M_n)\right\|_{L^p(\Omega,\mathbb{R}^d)}<+\infty, 
\end{eqnarray*}
for all $p\in[1, +\infty)$.
This complete the proof of  Lemma \ref{ch4lemma21}.
\end{proof}
 
 In  the sequel,  for all $s\in[0,T]$ we denote by $\lfloor s\rfloor$ the greatest grid point less than $s$.
 \begin{lemma}\label{ch4lemma22}
 Let $\overline{Y}^M_t$ be the time continuous approximation given by \eqref{dtamedjump},  there exists a  constant  $C_p$ such that the following inequalities hold
 \begin{eqnarray}
 \label{eq1lemma18}
 &&\sup_{t\in[0,T]}\left\|\overline{Y}^M_t-\overline{Y}^M_{\lfloor t\rfloor}\right\|_{L^p(\Omega, \mathbb{R}^d)}\leq C_p\Delta t^{1/2},\\
 &&\sup_{M\in\mathbb{N}}\sup_{t\in[0,T]}\left\|\overline{Y}^M_t\right\|_{L^p(\Omega, \mathbb{R}^d)}<\infty,\\
 && \sup_{t\in[0,T]}\left\|f_{\lambda}(\overline{Y}^M_t)-f_{\lambda}(\overline{Y}^M_{\lfloor t\rfloor})\right\|_{L^p(\Omega, \mathbb{R}^d)}\leq C_p\Delta t^{1/2}.
 \end{eqnarray}
 for all $p \in [1, \infty)$.
 \end{lemma}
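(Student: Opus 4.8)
The plan is to prove the three estimates in turn, exploiting that by \eqref{dtamedjump}, for $t\in[n\Delta t,(n+1)\Delta t)$ one has $\overline{Y}^M_{\lfloor t\rfloor}=Y^M_n$, so that $\overline{Y}^M_t$ differs from $\overline{Y}^M_{\lfloor t\rfloor}$ only through the tamed drift increment and the two stochastic increments; throughout, the uniform moment bounds of \lemref{keylemma} and \lemref{ch4lemma21} are the basic inputs, and all constants they furnish are uniform in $M$ and $n$.

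For \eqref{eq1lemma18}, I would fix $t\in[n\Delta t,(n+1)\Delta t)$, so that
\[
\overline{Y}^M_t-\overline{Y}^M_{\lfloor t\rfloor}=\frac{(t-n\Delta t)\,f_{\lambda}(Y^M_n)}{1+\Delta t\|f_{\lambda}(Y^M_n)\|}+g(Y^M_n)(W_t-W_{n\Delta t})+h(Y^M_n)(\overline{N}_t-\overline{N}_{n\Delta t}),
\]
and estimate the three summands in $L^p(\Omega,\mathbb{R}^d)$ separately. The first is dominated pointwise by $\Delta t\,\|f_{\lambda}(Y^M_n)\|$, hence by \lemref{ch4lemma21} its norm is at most $C_p\Delta t\le C_pT^{1/2}\Delta t^{1/2}$. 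For the second I would apply \lemref{ch4lemma15} to the predictable integrand $g(Y^M_n)\mathbbm{1}_{(n\Delta t,\,t]}$ (equivalently, use independence of $Y^M_n$ and $W_t-W_{n\Delta t}$ together with Gaussian scaling) and \lemref{ch4lemma21}, to get a bound $C_p(t-n\Delta t)^{1/2}\le C_p\Delta t^{1/2}$. For the third I would route the compensated-jump term through the Burkholder--Davis--Gundy estimate \lemref{ch4lemma18} with integrand $h(Y^M_n)\mathbbm{1}_{(n\Delta t,\,t]}$ and again \lemref{ch4lemma21}, giving $C_p(t-n\Delta t)^{1/2}\le C_p\Delta t^{1/2}$. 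Summing the three and using uniformity of the constants in $M$ yields \eqref{eq1lemma18}.

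The second inequality then follows from $\|\overline{Y}^M_t\|_{L^p}\le\|Y^M_n\|_{L^p}+\|\overline{Y}^M_t-\overline{Y}^M_{\lfloor t\rfloor}\|_{L^p}$, bounding the first term uniformly in $M$ and $n$ by \lemref{keylemma} and the second by \eqref{eq1lemma18} with $\Delta t\le T$, then taking suprema over $t$ and $M$. For the third inequality I would use that $f_{\lambda}$ inherits the superlinear growth $(A.4)$ of \assref{ass1}, so pointwise
\[
\|f_{\lambda}(\overline{Y}^M_t)-f_{\lambda}(\overline{Y}^M_{\lfloor t\rfloor})\|\le C\bigl(1+\|\overline{Y}^M_t\|^{c}+\|\overline{Y}^M_{\lfloor t\rfloor}\|^{c}\bigr)\,\|\overline{Y}^M_t-\overline{Y}^M_{\lfloor t\rfloor}\|,
\]
and then apply H\"older's inequality in $L^p(\Omega)$ with both conjugate exponents equal to $2p$: the factor $\|1+\|\overline{Y}^M_t\|^{c}+\|\overline{Y}^M_{\lfloor t\rfloor}\|^{c}\|_{L^{2p}}$ is bounded uniformly in $M$ and $t$ by the second inequality (used with exponent $2pc$) and \lemref{keylemma}, while $\|\overline{Y}^M_t-\overline{Y}^M_{\lfloor t\rfloor}\|_{L^{2p}}\le C_p\Delta t^{1/2}$ by \eqref{eq1lemma18} with exponent $2p$; the product gives the claim.

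Most of this is routine moment bookkeeping. The one place that genuinely needs care is the compensated-jump increment in \eqref{eq1lemma18}: a direct computation gives only $\bigl(\mathbb{E}|\overline{N}_t-\overline{N}_{\lfloor t\rfloor}|^p\bigr)^{1/p}\approx(\lambda\Delta t)^{1/p}$, which is weaker than $\Delta t^{1/2}$ for $p>2$, so it is essential to pass this term through the $L^p$ Burkholder--Davis--Gundy inequality \lemref{ch4lemma18} (equivalently \lemref{ch4lemma19}), whose right-hand side scales like $\Delta t^{1/2}$ independently of $p$.
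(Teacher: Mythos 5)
Your proposal is correct and follows essentially the same route as the paper: the same three-term decomposition of $\overline{Y}^M_t-\overline{Y}^M_{\lfloor t\rfloor}$, with the drift handled by the trivial bound $\Delta t\|f_{\lambda}(Y^M_n)\|$ and \lemref{ch4lemma21}, the Brownian and compensated-jump increments passed through \lemref{ch4lemma15} and \lemref{ch4lemma18} respectively, the second inequality by the triangle inequality and \lemref{keylemma}, and the third by the polynomial growth of $f_{\lambda}$ plus H\"older with the $L^{2pc}$ and $L^{2p}$ norms. Your closing remark correctly identifies why the jump term must go through the Burkholder--Davis--Gundy estimate rather than a direct moment computation, which is exactly the role \lemref{ch4lemma18} plays in the paper's argument.
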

 
 \begin{proof}
  Using Lemma \ref{ch4lemma18}, Lemma \ref{ch4lemma15} and the time continuous approximation \eqref{dtamedjump}, it follows that
  \begin{eqnarray}   
 \lefteqn{\sup\limits_{t\in[0,T]}\left\|\overline{Y}^M_t-\overline{Y}^M_{\lfloor t\rfloor}\right\|_{L^p(\Omega, \mathbb{R}^d)}} \nonumber\\
  &\leq &\dfrac{T}{M}\left(\sup_{t\in[0,T]}\left\|\dfrac{f_{\lambda}(\overline{Y}^M_{\lfloor t\rfloor})}{1+\Delta t^{\alpha}\|f_{\lambda}(\overline{Y}^M_{\lfloor t\rfloor})\|}\right\|_{L^p(\Omega, \mathbb{R}^d)}\right)+\sup_{t\in[0, T]}\left\|\int^t_{\lfloor t\rfloor} g(\overline{Y}^M_{\lfloor t\rfloor})dW_s\right\|_{L^p(\Omega, \mathbb{R}^d)}\nonumber\\
 &+& \sup_{t\in[0,T]}\left\|\int^t_{\lfloor t\rfloor}h(\overline{Y}^M_{\lfloor t\rfloor})d\overline{N}_s\right\|_{L^p(\Omega, \mathbb{R}^d)}\nonumber\\
 &\leq &\dfrac{T}{\sqrt{M}}\left(\sup_{n\in\{0,\cdots, M\}}\|f_{\lambda}(Y^M_n)\|_{L^p(\Omega, \mathbb{R}^d)}\right)+\sup_{t\in[0,T]}\left(\dfrac{T}{M}\sum_{i=1}^m\int^t_{\lfloor t\rfloor}\|g_i(\overline{Y}^M_s)\|^2_{L^p(\Omega, \mathbb{R}^d)}ds\right)^{1/2}\nonumber\\
 &+& \sup_{t\in[0,T]}\left(\dfrac{TC_p}{M}\int^t_{\lfloor t\rfloor}\|h(\overline{Y}^M_s)\|^2_{L^p(\Omega, \mathbb{R}^d)}ds\right)^{1/2}\nonumber\\ 
 &\leq &\dfrac{T}{\sqrt{M}}\left(\sup_{n\in\{0,\cdots, M\}}\|f_{\lambda}(Y^M_n)\|_{L^p(\Omega, \mathbb{R}^d)}\right)+\dfrac{\sqrt{Tm}}{\sqrt{M}}\left(\sup_{i\in\{1,\cdots, m\}}\sup_{n\in\{0,\cdots, M\}}\|g_i(Y^M_n)\|_{L^p(\Omega, \mathbb{R}^d)}\right)\nonumber\\
 &+&\dfrac{C_p\sqrt{T}}{\sqrt{M}}\left(\sup_{n\in\{0, \cdots,M\}}\|h(Y^M_n)\|_{L^p(\Omega, \mathbb{R}^d)}\right),
 \label{ch4Thcontinous}
 \end{eqnarray}
 for all $M\in\mathbb{N}$.
 
  Using  inequality \eqref{ch4Thcontinous}  and Lemma \ref{ch4lemma21}, it follows  that 
\begin{eqnarray}
\left[\sup_{t\in[0,T]}\left\|\overline{Y}^M_t-\overline{Y}^M_{\lfloor t\rfloor}\right\|_{L^p(\Omega, \mathbb{R}^d)}\right]<C_p\Delta t^{1/2},
\label{Ch4bon1}
\end{eqnarray}
for all $p\in[1,\infty)$. 
 
 Using  the inequalities \eqref{Ch4bon1}, $\|a\|\leq \|a-b\|+\|b\|$ for all $a,b\in\mathbb{R}^d$ and \lemref{keylemma} it follows that 
\begin{eqnarray*}
\sup_{t\in[0,T]}\|\overline{Y}^M_t\|_{L^p(\Omega, \mathbb{R}^d)}&\leq&\left[\sup_{t\in[0,T]}\left\|\overline{Y}^M_t-\overline{Y}^M_{\lfloor t\rfloor}\right\|_{L^p(\Omega, \mathbb{R}^d)}\right]+\sup_{t\in[0,T]}\left\|\overline{Y}^M_{\lfloor t\rfloor}\right\|_{L^p(\Omega, \mathbb{R}^d)}\\
&\leq&\dfrac{C_p}{M^{1/2}}+\sup_{t\in[0,T]}\left\|\overline{Y}^M_{\lfloor t\rfloor}\right\|_{L^p(\Omega, \mathbb{R}^d)}\\
&<&C_pT^{1/2}+\sup_{t\in[0,T]}\left\|\overline{Y}^M_{\lfloor t\rfloor}\right\|_{L^p(\Omega, \mathbb{R}^d)}\\
&<&\infty,
\end{eqnarray*}
for all $p\in[1,+\infty)$ and all $M\in\mathbb{N}$.
Further, using the polynomial growth condition 
\begin{eqnarray*}
\|f_{\lambda}(x)-f_{\lambda}(y)\|\leq C(K+\|x\|^c+\|y\|^c)\|x-y\|,
\end{eqnarray*}
for all $x, y\in\mathbb{R}^d$, it follows using Holder  inequality that 
\begin{eqnarray}
\sup_{t\in[0,T]}\|f_{\lambda}(\overline{Y}^M_t)-f_{\lambda}(\overline{Y}^M_{\lfloor t\rfloor})\|_{L^p(\Omega, \mathbb{R}^d)} &\leq &C\left(K+2\sup_{t\in[0,T]}\|\overline{Y}^M_t\|^c_{L^{2pc}(\Omega, \mathbb{R}^d)}\right)\nonumber\\
&\times & \left(\sup_{t\in[0,T]}\|\overline{Y}^M_t-\overline{Y}^M_{\lfloor t\rfloor}\|_{L^{2p}(\Omega, \mathbb{R}^d)}\right).
\label{ch4Thfcontinou}
\end{eqnarray}
Using \eqref{ch4Thfcontinou} and \eqref{eq1lemma18},   the following inequality holds  
\begin{eqnarray}
\left[\sup_{t\in[0,T]}\|f_{\lambda}(\overline{Y}^M_t)-f_{\lambda}(\overline{Y}^M_{\lfloor t\rfloor})\|_{L^p(\Omega, \mathbb{R}^d)}\right]<C_p\Delta t^{1/2},
\label{ch4Thffinal}
\end{eqnarray} 
for all $p\in[1,\infty)$.
\end{proof}
 
 Now we are ready to give the proof of \thmref{ch4theorem1}.
 
 \subsubsection{Main part of the proof of Theorem \ref{ch4theorem1} for $\chi^M_t= \overline{Y}^M_t$}
 Recall that for $s\in[0,T]$, $\lfloor s\rfloor$ denote the greatest grid point less than $s$.
 The time continuous solution \eqref{dtamedjump} can be written in its integral form as bellow 
 \begin{eqnarray}
 \overline{Y}^M_s=X_0+\int_0^s\dfrac{f_{\lambda}(\overline{Y}^M_{\lfloor u\rfloor})}{1+\Delta t^{\alpha}\|f_{\lambda}(\overline{Y}^M_{\lfloor u\rfloor})\|}du+ \int_0^s g(\overline{Y}^M_{\lfloor u\rfloor})dW_u+\int_0^s h(\overline{Y}^M_{\lfloor u\rfloor})d\overline{N}_u,
 \label{ch4continoussol2}
 \end{eqnarray}
 for all $s\in[0, T]$ almost surely and all $M\in\mathbb{N}$.
 
 Let us estimate first the quantity  $\|X_s-\overline{Y}^M_s\|^2$, where $X_s$ is the exact solution of \eqref{model}.
 \begin{eqnarray*}
 X_s-\overline{Y}_s&=&\int_0^s\left(f_{\lambda}(X_u)-\dfrac{f_{\lambda}(\overline{Y}^M_{\lfloor u\rfloor})}{1+\Delta t^{\alpha}\|f_{\lambda}(\overline{Y}^M_{\lfloor u\rfloor})\|}\right)du+\int_0^s\left(g(X_u)-g(\overline{Y}^M_{\lfloor u\rfloor})\right)dW_u\\
 &+&\int_0^s\left(h(X_u)-h(\overline{Y}^M_{\lfloor u\rfloor})\right)d\overline{N}_u.
 \end{eqnarray*}
 Using the relation $d\overline{N}_u=dN_u-\lambda du$, it follows that
 \begin{eqnarray*}
 X_s-\overline{Y}_s&=&\int_0^s\left[\left(f_{\lambda}(X_u)-\dfrac{f_{\lambda}(\overline{Y}^M_{\lfloor u\rfloor})}{1+\Delta t^{\alpha} \|f_{\lambda}(\overline{Y}^M_{\lfloor u\rfloor})\|}\right)-\lambda \left(h(X_u)-h(\overline{Y}^M_{\lfloor u\rfloor})\right)\right]du\\
 &+&\int_0^s\left(g(X_u)-g(\overline{Y}^M_{\lfloor u\rfloor})\right)dW_u+\int_0^s\left(h(X_u)-h(\overline{Y}^M_{\lfloor u\rfloor})\right)dN_u.
 \end{eqnarray*}
 The function $ k :\mathbb{R}^d\longrightarrow \mathbb{R}$, $x \longmapsto \|x\|^2$ is twice differentiable. Applying It\^{o}'s formula for jumps process (\cite[pp. 6-9]{Oks}) to the process $X_s-\overline{Y}^M_s$ leads to 
 \begin{eqnarray*}
 \left\|X_s-\overline{Y}^M_s\right\|^2&=&2\int_0^s\left<X_u-\overline{Y}^M_u, f_{\lambda}(X_u)-\dfrac{f_{\lambda}(\overline{Y}^M_{\lfloor u\rfloor})}{1+\Delta t^{\alpha}\|f_{\lambda}(\overline{Y}^M_{\lfloor u\rfloor})\|}\right>du\\
 &-&2\lambda\int_0^s\left<X_u-\overline{Y}^M_u, h(X_u)-h(\overline{Y}^M_{\lfloor u\rfloor})\right>du +\sum_{i=1}^m\int_0^s\|g_i(X_u)-g_i(\overline{Y}^M_{\lfloor u\rfloor})\|^2du\\
 &+&2\sum_{i=1}^m\int_0^s\left<X_u-\overline{Y}^M_u, g_i(X_u)-g_i(\overline{Y}^M_{\lfloor u\rfloor})\right>dW^i_u\\
 &+& \int_0^s\left[\|X_u-\overline{Y}^M_u+h(X_u)-h(\overline{Y}^M_{\lfloor u\rfloor})\|^2-\|X_u-\overline{Y}^M_u\|^2\right]dN_u.
 \end{eqnarray*}
 Using again the relation $dN_u=d\overline{N}_u+\lambda du$ leads to 
 \begin{eqnarray}
 \left\|X_s-\overline{Y}^M_s\right\|^2&=&2\int_0^s\left<X_u-\overline{Y}^M_u, f_{\lambda}(X_u)-\dfrac{f_{\lambda}(\overline{Y}^M_{\lfloor u\rfloor})}{1+\Delta t^{\alpha}\|f_{\lambda}(\overline{Y}^M_{\lfloor u\rfloor})\|}\right>du\nonumber\\
 &-&2\lambda\int_0^s\left<X_u-\overline{Y}^M_u, h(X_u)-h(\overline{Y}^M_{\lfloor u\rfloor})\right>du+\sum_{i=1}^m\int_0^s\|g_i(X_u)-g_i(\overline{Y}^M_{\lfloor u\rfloor})\|^2du\nonumber\\
 &+&2\sum_{i=1}^m\int_0^s\left<X_u-\overline{Y}^M_u, g_i(X_u)-g_i(\overline{Y}^M_{\lfloor u\rfloor})\right>dW^i_u\nonumber\\
 &+& \int_0^s\left[\|X_u-\overline{Y}^M_u+h(X_u)-h(\overline{Y}^M_{\lfloor u\rfloor})\|^2-\|X_u-\overline{Y}^M_u\|^2\right]d\overline{N}_u\nonumber\\
 &+&\lambda\int_0^s\left[\|X_u-\overline{Y}^M_u+h(X_u)-h(\overline{Y}^M_{\lfloor u \rfloor})\|^2-\|X_u-\overline{Y}^M_u\|^2\right]du\nonumber\\
 &=&A_1+A_2+A_3+A_4+A_5+A_6.
 \label{ch4Th1}
 \end{eqnarray}
 In the next step, we give some useful estimations of $A_1, A_2, A_3$ and $A_6$. 
 \begin{eqnarray*}
 A_1& : =&2\int_0^s\left<X_u-\overline{Y}^M_u,f_{\lambda}(X_u)-\dfrac{f_{\lambda}(\overline{Y}_{\lfloor u\rfloor})}{1+\Delta t^{\alpha}\|f_{\lambda}(\overline{Y}^M_{\lfloor u\rfloor})\|}\right>du\\
 &=&2\int_0^s\left\langle X_u-\overline{Y}^M_u,f_{\lambda}(X_u)-f_{\lambda}(\overline{Y}^M_u)\right\rangle du\\
 &+&2\int_0^s\left<X_u-\overline{Y}^M_u,f_{\lambda}(\overline{Y}^M_u)-\dfrac{f_{\lambda}(\overline{Y}^M_{\lfloor u\rfloor})}{1+\Delta t^{\alpha}\|f_{\lambda}(\overline{Y}^M_{\lfloor u\rfloor})\|}\right>du,\\
 &=& A_{11}+A_{12}.
 \end{eqnarray*}
 Using the one-sided Lipschitz condition satisfied by $f_{\lambda}$ leads to
 \begin{eqnarray}
 A_{11} &: =&2\int_0^s\left\langle X_u-\overline{Y}^M_u,f_{\lambda}(X_u)-f_{\lambda}(\overline{Y}^M_u)\right\rangle du\nonumber\\
 &\leq& 2C\int_0^s\|X_u-\overline{Y}^M_u\|^2du.
 \label{ch4ThA11}
 \end{eqnarray}
 Moreover, using the inequality $2\langle a, b\rangle\leq 2\|a\|\|b\|\leq \Vert a \Vert^2+\Vert b \Vert^2$ for all  $a,b \in \mathbb{R}^d$ leads to
 \begin{eqnarray}
 A_{12}&=& 2\int_0^s\left<X_u-\overline{Y}^M_u,f_{\lambda}(\overline{Y}^M_u)-\dfrac{f_{\lambda}(\overline{Y}^M_{\lfloor u\rfloor})}{1+\Delta t^{\alpha}\|f_{\lambda}(\overline{Y}^M_{\lfloor u\rfloor})\|}\right>du\nonumber\\
 &=&2\int_0^s\left<X_u-\overline{Y}^M_u, f_{\lambda}(\overline{Y}^M_u)-f_{\lambda}(\overline{Y}^M_{\lfloor u\rfloor})\right>ds\nonumber\\
 &+&2\Delta t^{\alpha}\int_0^s\left<X_u-\overline{Y}^M_u, \dfrac{f_{\lambda}(\overline{Y}^M_{\lfloor u\rfloor})\|f_{\lambda}(\overline{Y}^M_{\lfloor u\rfloor})\|}{1+\Delta t^{\alpha}\|f_{\lambda}(\overline{Y}^M_{\lfloor u\rfloor})\|}\right>du\nonumber\\
 &\leq &\int_0^s\|X_u-\overline{Y}^M_u\|^2du+\int_0^s\|f_{\lambda}(\overline{Y}^M_u)-f_{\lambda}(\overline{Y}^M_{\lfloor u\rfloor})\|^2du\nonumber\\
 &+&\int_0^s\|X_u-\overline{Y}^M_u\|^2du+\dfrac{T^{2\alpha}}{M^{2\alpha}}\int_0^s\|f_{\lambda}(\overline{Y}^M_{\lfloor u\rfloor})\|^4du\nonumber\\
 &\leq &2\int_0^s\|X_u-\overline{Y}^M_u\|^2du+\int_0^s\|f_{\lambda}(\overline{Y}^M_u)-f_{\lambda}(\overline{Y}_{\lfloor u\rfloor})\|^2du\nonumber\\
 &+&\dfrac{T^{2\alpha}}{M^{2\alpha}}\int_0^s\|f_{\lambda}(\overline{Y}^M_{\lfloor u\rfloor})\|^4du.
 \label{ch4ThA12}
 \end{eqnarray}
 Combining \eqref{ch4ThA11} and \eqref{ch4ThA12} give the following estimation of $A_1$
 \begin{eqnarray}
 A_1 &\leq & (2C+2)\int_0^s\|X_u-\overline{Y}^M_u\|^2du+\int_0^s\|f_{\lambda}(\overline{Y}^M_u)-f_{\lambda}(\overline{Y}_{\lfloor u\rfloor})\|^2du\nonumber\\
 &+&\dfrac{T^{2\alpha}}{M^{2\alpha}}\int_0^s\|f_{\lambda}(\overline{Y}^M_{\lfloor u\rfloor})\|^4du.
 \label{ch4ThA1}
 \end{eqnarray}
 Using again the inequality $2\langle a, b\rangle\leq 2\|a\| \|b\|\leq \Vert a \Vert^2+ \Vert b\Vert^2$ for all $a, b \in \mathbb{R}^d$  and the global Lipschitz condition satisfied by $h$ leads to
 \begin{eqnarray}
 A_2 & : =&-2\lambda\int_0^s\left<X_u-\overline{Y}^M_u, h(X_u)-h(\overline{Y}^M_{\lfloor u\rfloor})\right>du\nonumber\\
 &=&-2\lambda\int_0^s\left<X_u-\overline{Y}^M_u, h(X_u)-h(\overline{Y}^M_u)\right>du-2\lambda\int_0^s\left<X_u-\overline{Y}^M_u, h(\overline{Y}^M_u)-h(\overline{Y}^M_{\lfloor u\rfloor})\right>du\nonumber\\
 &\leq &(2\lambda+\lambda C^2)\int_0^s\|X_u-\overline{Y}^M_u\|^2du+\lambda C^2\int_0^s\|\overline{Y}^M_u-\overline{Y}^M_{\lfloor u\rfloor}\|^2du.
 \label{ch4ThA2}
 \end{eqnarray}
 Using the inequalities $\|g_i(x)-g_i(y)\|\leq \|g(x)-g(y)\|$ and $\Vert a+b \Vert ^2\leq 2\Vert a \Vert^2+2\Vert b\Vert^2$ for all  $a,b \in \mathbb{R}^d$ and the global Lipschitz condition  we have 
 \begin{eqnarray}
 A_3 &: =&\sum_{i=1}^m\int_0^s\|g_i(X_u)-g_i(\overline{Y}^M_{\lfloor u\rfloor})\|^2du\nonumber\\
 &\leq &m\int_0^s\|g(X_u)-g(\overline{Y}^M_{\lfloor u\rfloor})\|^2du\nonumber\\
 &\leq &m\int_0^s\|g(X_u)-g(\overline{Y}^M_u)+g(\overline{Y}^M_u)-g(\overline{Y}^M_{\lfloor u\rfloor})\|^2du\nonumber\\
 &\leq &2m\int_0^s\|g(X_u)-g(\overline{Y}^M_u)\|^2du+2m\int_0^s\|g(\overline{Y}^M_u)-g(\overline{Y}^M_{\lfloor u\rfloor})\|^2du\nonumber\\
 &\leq& 2mC^2\int_0^s\|X_u-\overline{Y}^M_u\|^2du+2mC^2\int_0^s\|\overline{Y}^M_u-\overline{Y}^M_{\lfloor u\rfloor}\|^2du.
 \label{ch4ThA3}
 \end{eqnarray}
 Using once again inequality $\Vert a+b \Vert^2\leq 2\|a\|^2+\|b\|^2$ for all $ a, b \in \mathbb{R}^d$ we obtain the following estimation of $A_6$ 
 \begin{eqnarray}
 A_6 & : =&\lambda\int_0^s\left[\|X_u-\overline{Y}^M_u+h(\overline{Y}^M_u)-h(\overline{Y}^M_{\lfloor u\rfloor})\|^2-\|X_u-\overline{Y}^M_u\|^2\right]du\nonumber\\
 &\leq &\lambda\int_0^s\|X_u-\overline{Y}^M_u\|^2du+2\lambda\int_0^s\|h(X_u)-h(\overline{Y}^M_{\lfloor u\rfloor})\|^2du\nonumber\\
 &\leq &\lambda\int_0^s\|X_u-\overline{Y}^M_u\|^2du+4\lambda\int_0^s\|h(X_u)-h(\overline{Y}^M_u)\|^2du\nonumber\\
 &+ &4\lambda\int_0^s\|h(\overline{Y}^M_u)-h(\overline{Y}^M_{\lfloor u\rfloor})\|^2du\nonumber\\
 &\leq &(\lambda+4\lambda C^2)\int_0^s\|X_u-\overline{Y}^M_u\|^2du+4\lambda C^2\int_0^s\|\overline{Y}^M_u-\overline{Y}^M_{\lfloor u\rfloor}\|^2du.
 \label{ch4ThA6}
 \end{eqnarray}
 Inserting \eqref{ch4ThA1}, \eqref{ch4ThA2}, \eqref{ch4ThA3} and \eqref{ch4ThA6} in \eqref{ch4Th1} we obtain
  \begin{eqnarray*}
  \left\|X_s-\overline{Y}^M_s\right\|^2&\leq &(2C+2+2mC^2+3\lambda+5\lambda C^2)\int_0^s\|X_u-\overline{Y}^M_u\|^2du\nonumber\\
  &+&(2mC^2+5\lambda C^2)\int_0^s\|\overline{Y}^M_u-\overline{Y}^M_{\lfloor u\rfloor}\|^2du\nonumber\\
  &+&\int_0^s\|f_{\lambda}(\overline{Y}^M_u)-f_{\lambda}(\overline{Y}^M_{\lfloor u\rfloor})\|^2du+\dfrac{T^{2\alpha}}{M^{2\alpha}}\int_0^s\|f_{\lambda}(\overline{Y}^M_{\lfloor u\rfloor})\|^4du\nonumber\\
  &+&2\sum_{i=1}^m\int_0^s\left<X_u-\overline{Y}^M_u, g_i(X_u)-g_i(\overline{Y}^M_{\lfloor u\rfloor})\right>dW^{i}_u\nonumber\\
  &+&\int_0^s\left[\|X_u-\overline{Y}^M_u+h(X_u)-h(\overline{Y}^M_{\lfloor u\rfloor})\|^2-\|X_u-\overline{Y}^M_u||^2\right]d\overline{N}_u.
  \end{eqnarray*}
Taking the supremum in both sides of the previous inequality leads to
\begin{eqnarray}
\sup_{s\in[0,t]}\left\|X_s-\overline{Y}^M_s\right\|^2&\leq &(2C+2+2mC^2+3\lambda+5\lambda C^2)\int_0^t\|X_u-\overline{Y}^M_u\|^2du\nonumber\\
  &+&(2mC^2+5\lambda C^2)\int_0^t\|\overline{Y}^M_u-\overline{Y}^M_{\lfloor u\rfloor}\|^2du\nonumber\\
  &+&\int_0^t\|f_{\lambda}(\overline{Y}^M_u)-f_{\lambda}(\overline{Y}^M_{\lfloor u\rfloor})\|^2du+\dfrac{T^{2\alpha}}{M^{2\alpha}}\int_0^t\|f_{\lambda}(\overline{Y}^M_{\lfloor u\rfloor})\|^4du\nonumber\\
  &+&2\sup_{s\in[0,t]}\left|\sum_{i=1}^m\int_0^s\left<X_u-\overline{Y}^M_u, g_i(X_u)-g_i(\overline{Y}^M_{\lfloor u\rfloor})\right>dW^{i}_u\right|\nonumber\\
  &+&\sup_{s\in[0,t]}\left|\int_0^s\left[\|X_u-\overline{Y}^M_u+h(X_u)-h(\overline{Y}^M_{\lfloor u\rfloor})\|^2\right]d\overline{N}_u\right|\nonumber\\
  &+&\sup_{s\in[0,t]}\left|\int_0^s\|X_u-\overline{Y}^M_u\|^2d\overline{N}_u\right|.
  \label{ch4Th2}
\end{eqnarray}
 Using Lemma \ref{ch4lemma15} we  have the following estimation for all $p\geq 2$
 \begin{eqnarray*}
 B_1& :=&\left\|2\sup_{s\in[0,t]}\left|\sum_{i=1}^m\int_0^s\left<X_u-\overline{Y}^M_u, g_i(X_u)-g_i(\overline{Y}^M_{\lfloor u\rfloor})\right>dW^i_u\right|\right\|_{L^{p/2}(\Omega, \mathbb{R})}\\
 &\leq &C_p\left(\int_0^t\sum_{i=1}^m\left\|\left<X_u-\overline{Y}^M_u, g_i(X_u)-g_i(\overline{Y}^M_{\lfloor u\rfloor})\right>\right\|^2_{L^{p/2}(\Omega, \mathbb{R})}ds\right)^{1/2}.
 \end{eqnarray*}
Moreover, using the inequalities $ a b  \leq \dfrac{a^2}{2}+\dfrac{b^2}{2}$ and
$(a+b)^2\leq 2 a^2+2 b^2$ for all  $a,b \in \mathbb{R}$,
we  have the following estimations  for all $p\geq 2$
 \begin{eqnarray}
 B_1 &\leq&  C_p\left(\int_0^t\sum_{i=1}^m\left\|\left<X_u-\overline{Y}^M_u, g_i(X_u)-g_i(\overline{Y}^M_{\lfloor u\rfloor})\right>\right\|^2_{L^{p/2}(\Omega, \mathbb{R})}du\right)^{1/2}\nonumber\\ 
 &\leq &C_p\left(\int_0^t\sum_{i=1}^m\|X_u-\overline{Y}_u^M\|^2_{L^p(\Omega, \mathbb{R})}\|g_i(X_u)-g_i(\overline{Y}^M_{\lfloor u\rfloor})\|^2_{L^p(\Omega, \mathbb{R}^d)}du\right)^{1/2}\nonumber\\
 &\leq &\dfrac{C_p}{\sqrt{2}}\left(\sup_{s\in[0,t]}\|X_s-\overline{Y}^M_s\|_{L^p(\Omega, \mathbb{R}^d)}\right)\left(2C^2m\int_0^t\|X_s-\overline{Y}^M_{\lfloor s\rfloor}\|^2_{L^p(\Omega, \mathbb{R}^d)}ds\right)^{1/2}\nonumber\\
 &\leq &\dfrac{1}{4}\sup_{s\in[0,t]}\|X_s-\overline{Y}^M_s\|^2_{L^p(\Omega, \mathbb{R}^d)}+C_p^2 m\int_0^t\|X_s-\overline{Y}^M_{\lfloor s\rfloor}\|^2_{L^p(\Omega,\mathbb{R}^d)}ds\nonumber\\
 &\leq &\dfrac{1}{4}\sup_{s\in[0,t]}\|X_s-\overline{Y}^M_s\|^2_{L^p(\Omega, \mathbb{R}^d)}+2C_p^2m\int_0^t\|X_s-\overline{Y}^M_s\|^2_{L^p(\Omega,\mathbb{R}^d)}ds\nonumber\\
 &+&2 C_p^2m\int_0^t\|\overline{Y}^M_s-\overline{Y}^M_{\lfloor s\rfloor}\|^2_{L^p(\Omega,\mathbb{R}^d)}ds. 
 \label{ch4ThB1}
 \end{eqnarray}
 Using Lemma \ref{ch4lemma18} and the inequality $(a+b)^4\leq 16a^4+16b^4$, it follows that 
 \begin{eqnarray*}
 B_2 &: =&\left\|\sup_{s\in[0,t]}\left|\int_0^s\|X_u-\overline{Y}^M_u+h(X_u)-h(\overline{Y}^M_{\lfloor u\rfloor})\|^2d\overline{N}_u\right|\right\|_{L^{p/2}(\Omega, \mathbb{R}^d)}\nonumber\\
 &\leq &C_p\left(\int_0^t \|X_u-\overline{Y}^M_u+h(X_u)-h(\overline{Y}^M_{\lfloor u\rfloor})\|^4_{L^{p/2}(\Omega, \mathbb{R}^d)}du\right)^{1/2}\nonumber\\
 &\leq &C_p\left(\int_0^t16\|X_u-\overline{Y}^M_u||^4_{L^{p/2}(\Omega, \mathbb{R}^d)}+16\|h(X_u)-h(\overline{Y}^M_{\lfloor u\rfloor})||^4_{L^{p/2}(\Omega, \mathbb{R}^d)}du\right)^{1/2},
 \end{eqnarray*}
 for all $p\geq 2$.
 
 Using the inequality $\sqrt{a+b}\leq\sqrt{a}+\sqrt{b}$  for $a,b \in \mathbb{R}^+$, it follows that 
 \begin{eqnarray}
 B_2&\leq &2C_p\left(\int_0^t\|X_u-\overline{Y}^M_u\|^4_{L^{p/2}(\Omega, \mathbb{R}^d)}du\right)^{1/2} +2C_p\left(\int_0^t\|h(X_u)-h(\overline{Y}^M_{\lfloor u\rfloor})\|^4_{L^{p/2}(\Omega, \mathbb{R}^d)}du\right)^{1/2}\nonumber\\
 & =& B_{21}+B_{22}.
 \label{ch4ThB}
 \end{eqnarray}
 Using Holder's inequality, it follows that 
 \begin{eqnarray*}
 B_{21} &: = &2 C_p\left(\int_0^t\|X_u-\overline{Y}^M_u\|^4_{L^{p/2}(\Omega, \mathbb{R}^d)}du\right)^{1/2}\nonumber\\
 &\leq &2C_p\left(\int_0^t\|X_u-\overline{Y}^M_u\|^2_{L^p(\Omega, \mathbb{R}^d)}\|X_u-\overline{Y}^M_u\|^2_{L^p(\Omega, \mathbb{R}^d)}du\right)^{1/2}\nonumber\\
 &\leq &\dfrac{1}{4}\sup_{u\in[0,t]}\|X_u-\overline{Y}^M_u\|_{L^p(\Omega,\mathbb{R}^d)} \,\,8C_p\left(\int_0^t\|X_u-\overline{Y}^M_u\|^2_{L^p(\Omega,\mathbb{R}^d)}du\right)^{1/2}.
 \end{eqnarray*}
 Using the inequality $2ab\leq a^2+b^2$  for $a,b \in \mathbb{R}$ leads to 
 \begin{eqnarray}
 B_{21}&\leq &\dfrac{1}{16}\sup_{u\in[0,t]}\|X_u-\overline{Y}^M_u\|^2_{L^p(\Omega, \mathbb{R}^d)}
 +16C^2_p\int_0^t\|X_u-\overline{Y}^M_u\|^2_{L^p(\Omega, \mathbb{R}^d)}du.
 \label{ch4ThB21}
 \end{eqnarray}
 Using the inequalities $(a+b)^4\leq 4a^4+4b^4$ and $\sqrt{a+b}\leq\sqrt{a}+\sqrt{b}$ for $a,b \in \mathbb{R}^+$, we obtain 
 \begin{eqnarray*}
 B_{22}&: = &2C_p\left(\int_0^t\|h(X_u)-h(\overline{Y}^M_{\lfloor u\rfloor})\|^4_{L^{p/2}(\Omega, \mathbb{R}^d)}du\right)^{1/2}\nonumber\\
 &\leq &2C_p\left(\int_0^t\left[4 \|h(X_u)-h(\overline{Y}^M_u)\|^4_{L^{p/2}(\Omega,\mathbb{R}^d)}+4\|h(\overline{Y}^M_u)-h(\overline{Y}^M_{\lfloor u\rfloor})\|^4_{L^{p/2}(\Omega,\mathbb{R}^d)}\right]du\right)^{1/2}\\
 &\leq &4C_p\left(\int_0^t\|h(X_u)-h(\overline{Y}^M_u)\|^4_{L^{p/2}(\Omega, \mathbb{R}^d)}du\right)^{1/2}
 +4C_p\left(\int_0^t\|h(\overline{Y}^M_u)-h(\overline{Y}^M_{\lfloor u\rfloor})\|^4_{L^{p/2}(\Omega, \mathbb{R}^d)}du\right)^{1/2}.
 \end{eqnarray*}
 Using the global Lipschitz condition, leads to 
 \begin{eqnarray*}
 B_{22}&\leq &4C_p\left(\int_0^tC\|X_u-\overline{Y}^M_u\|^4_{L^{p/2}(\Omega, \mathbb{R}^d)}du\right)^{1/2}+4C_p\left(\int_0^tC \|\overline{Y}^M_u-\overline{Y}^M_{\lfloor u\rfloor}\|^4_{L^{p/2}(\Omega, \mathbb{R}^d)}du\right)^{1/2}.
 \end{eqnarray*}
 Using the same estimations as for $B_{21}$, it follows that :
 \begin{eqnarray}
 B_{22} &\leq &\dfrac{1}{16}\sup_{s\in[0,t]}\|X_s-\overline{Y}^M_s\|^2_{L^p(\Omega, \mathbb{R}^d)} +64C_p\int_0^t\|X_u-\overline{Y}^M_u\|^2_{L^p(\Omega, \mathbb{R}^d)}du\nonumber\\
 &+&\dfrac{1}{4}\sup_{s\in[0,t]}\|\overline{Y}^M_s-\overline{Y}^M_{\lfloor s\rfloor}\|^2_{L^p(\Omega, \mathbb{R}^d)}+64C_p\int_0^t\|\overline{Y}^M_u-\overline{Y}^M_{\lfloor u\rfloor}\|^2_{L^p(\Omega, \mathbb{R}^d)}du.\nonumber
 \end{eqnarray}
 Taking the supremum under the integrand in the last term of the above inequality and using the fact that $C_p$ is an arbitrary constant  leads to 
 \begin{eqnarray}
 B_{22}&\leq &\dfrac{1}{16}\sup_{s\in[0,t]}\|X_s-\overline{Y}^M_s\|^2_{L^p(\Omega, \mathbb{R}^d)} +64C_p\int_0^t\|X_u-\overline{Y}^M_u\|^2_{L^p(\Omega, \mathbb{R}^d)}du\nonumber\\
 &+&C_p\sup_{s\in[0,t]}\|\overline{Y}^M_s-\overline{Y}^M_{\lfloor s\rfloor}\|^2_{L^p(\Omega, \mathbb{R}^d)}.
 \label{ch4ThB22}
 \end{eqnarray}
 Inserting \eqref{ch4ThB21} and \eqref{ch4ThB22} into \eqref{ch4ThB} gives
 \begin{eqnarray}
 B_2&\leq &\dfrac{1}{8}\sup_{s\in[0,t]}\|X_s-\overline{Y}^M_s\|^2_{L^p(\Omega, \mathbb{R}^d)}+C_p\int_0^t\|X_u-\overline{Y}^M_u\|^2_{L^p(\Omega, \mathbb{R}^d)}du\nonumber\\
 &+&C_p\sup_{s\in[0,t]}\|\overline{Y}^M_s-\overline{Y}^M_{\lfloor s\rfloor}\|^2_{L^p(\Omega, \mathbb{R}^d)}.
 \label{ch4ThB2}
 \end{eqnarray}
 Using again Lemma \ref{ch4lemma18} leads to 
 \begin{eqnarray*}
B_3 &: =& \left\|\sup_{u\in[0,t]}\left(\int_0^s\|X_u-\overline{Y} ^M_u\|^2d\overline{N}_u\right)^{1/2}\right\|_{L^{p/2}(\Omega, \mathbb{R}^d)}\\
&\leq &C_p\left(\int_0^t\|X_u-\overline{Y}^M_u\|^4_{L^{p/2}(\Omega, \mathbb{R}^d)}du\right)^{1/2}.
 \end{eqnarray*}
 Using the same argument as for $B_{21}$, we obtain 
 \begin{eqnarray}
 B_3 &\leq &\dfrac{1}{8}\sup_{u\in[0,t]}\|X_u-\overline{Y}^M_u\|^2_{L^p(\Omega, \mathbb{R}^d)}+C_p\int_0^t\|X_u-\overline{Y}^M_u\|^2_{L^p(\Omega, \mathbb{R}^d)}du.
 \label{ch4ThB3}
 \end{eqnarray}
 Taking the $L^{p}$ norm in both  side of \eqref{ch4Th2}, inserting inequalities \eqref{ch4ThB1}, \eqref{ch4ThB2}, \eqref{ch4ThB3} and using Minkowski's inequality in its integral form leads to 
\begin{eqnarray*}
\left\|\sup_{s\in[0,t]}\|X_s-\overline{Y}^M_s\|\right\|^2_{L^p(\Omega, \mathbb{R})}&=&\left\|\sup_{s\in[0,t]}\|X_s-\overline{Y}^M_s\|^2\right\|_{L^{p/2}(\Omega, \mathbb{R})},
\end{eqnarray*}
so
  \begin{eqnarray}
  \label{rev2}
  \lefteqn{\left\|\sup_{s\in[0,t]}\|X_s-\overline{Y}^M_s\|\right\|^2_{L^p(\Omega, \mathbb{R})}}\nonumber\\
  &\leq & C_p\int_0^t\|X_s-\overline{Y}^M_s\|^2_{L^p(\Omega, \mathbb{R}^d)}ds+C_p\int_0^t\|\overline{Y}^M_s-\overline{Y}^M_{\lfloor s\rfloor}\|^2_{L^p(\Omega,\mathbb{R}^d)}ds\nonumber\\
 &+&\int_0^t\|f_{\lambda}(X_s)-f_{\lambda}(\overline{Y}^M_{\lfloor s\rfloor})\|^2_{L^{p}(\Omega, \mathbb{R}^d)}ds+C_p\sup_{u\in[0,t]}\|\overline{Y}^M_u-\overline{Y}^M_{\lfloor u\rfloor}\|^2_{L^p(\Omega, \mathbb{R}^d)}\nonumber\\
 &+&\dfrac{T^{2\alpha}}{M^{2\alpha}}\int_0^t\|f_{\lambda}(\overline{Y}^M_{\lfloor s\rfloor})\|^4_{L^{2p}(\Omega, \mathbb{R}^d)}ds +2C_p\int_0^t\|\overline{Y}^M_s-\overline{Y}^M_{\lfloor s\rfloor}\|^2_{L^p(\Omega, \mathbb{R}^d)}ds\nonumber\\
 &+&\dfrac{1}{2}\left\|\sup_{s\in[0,t]}\|X_s-\overline{Y}^M_s\|\right\|^2_{L^{p}(\Omega, \mathbb{R})},
 \end{eqnarray} 
 for all $t\in[0,T]$ and all $p\in[2,+\infty)$.
 
Inequality \eqref{rev2} can be rewritten in the following appropriate form 

\begin{eqnarray}
\label{rev3}
 \lefteqn{\left\|\sup_{s\in[0,t]}\|X_s-\overline{Y}^M_s\|\right\|^2_{L^p(\Omega, \mathbb{R})}}\nonumber\\
&\leq & C_p\int_0^t\|X_s-\overline{Y}^M_s\|^2_{L^p(\Omega, \mathbb{R}^d)}ds+C_p\int_0^t\|\overline{Y}^M_s-\overline{Y}^M_{\lfloor s\rfloor}\|^2_{L^p(\Omega,\mathbb{R}^d)}ds\nonumber\\
 &+&\int_0^t\|f_{\lambda}(X_s)-f_{\lambda}(\overline{Y}^M_{\lfloor s\rfloor})\|^2_{L^{p}(\Omega, \mathbb{R}^d)}ds+C_p\sup_{u\in[0,t]}\|\overline{Y}^M_u-\overline{Y}^M_{\lfloor u\rfloor}\|^2_{L^p(\Omega, \mathbb{R}^d)}\nonumber\\
 &+&\dfrac{T^{2\alpha}}{M^{2\alpha}}\int_0^t\|f_{\lambda}(\overline{Y}^M_{\lfloor s\rfloor})\|^4_{L^{2p}(\Omega, \mathbb{R}^d)}ds +2C^2m\int_0^t\|\overline{Y}^M_s-\overline{Y}^M_{\lfloor s\rfloor}\|^2_{L^p(\Omega, \mathbb{R}^d)}ds.
 \end{eqnarray}
 
  Applying Gronwall's lemma to \eqref{rev3} leads to
  {\small{
 \begin{eqnarray}
 \label{rev4}
 \lefteqn{\dfrac{1}{2}\left\|\sup\limits_{s\in[0,t]}\|X_s-\overline{Y}^M_s\|\right\|^2_{L^p(\Omega, \mathbb{R})}}\nonumber\\ 
 &\leq &C_pe^{C_p}\left(\int_0^T\|f_{\lambda}(\overline{Y}^M_s)-f_{\lambda}(\overline{Y}^M_{\lfloor s\rfloor})\|^2_{L^p(\Omega, \mathbb{R}^d)}ds+C_p\sup_{u\in[0,t]}\|\overline{Y}^M_u-\overline{Y}^M_{\lfloor u\rfloor}\|^2_{L^p(\Omega, \mathbb{R}^d)}\right.\nonumber\\
 & &\left. +\dfrac{T^{2\alpha}}{M^{2\alpha}}\int_0^T\|f_{\lambda}(\overline{Y}^M_{\lfloor s\rfloor})\|^4_{L^{2p}(\Omega, \mathbb{R}^d)}ds +C_p\int_0^T\|\overline{Y}^M_s-\overline{Y}^M_{\lfloor s\rfloor}\|^2_{L^p(\Omega, \mathbb{R}^d)}ds\right).
 \end{eqnarray}
 }}
 From \eqref{rev4} and  the inequality $\sqrt{a+b+c}\leq \sqrt{a}+\sqrt{b}+\sqrt{c}$ for all $ a, b, c \in \mathbb{R}^+$, it follows that
 \begin{eqnarray}
 \lefteqn{\dfrac{1}{2}\left\|\sup\limits_{s\in[0,t]}\|X_s-\overline{Y}^M_s\|\right\|_{L^p(\Omega, \mathbb{R})}}\nonumber\\ 
 &\leq &C_pe^{C_p}\left(\sup_{t\in[0,T]}\|f_{\lambda}(\overline{Y}^M_t)-f_{\lambda}(\overline{Y}^M_{\lfloor t\rfloor})\|_{L^p(\Omega, \mathbb{R}^d)}+C_p\sup_{t\in[0,T]}\|\overline{Y}^M_t-\overline{Y}^M_{\lfloor t\rfloor}\|_{L^p(\Omega, \mathbb{R}^d)} \right.\nonumber\\
 &+& \left. \dfrac{T^{\alpha}}{M^{\alpha}}\left[\sup_{n\in\{0,\cdots, M\}}\|f_{\lambda}(Y^M_n)\|^2_{L^p(\Omega, \mathbb{R}^d)}\right]+C_p\sup_{t\in[0,T]}\|\overline{Y}^M_t-\overline{Y}^M_{\lfloor t\rfloor}\|_{L^p(\Omega, \mathbb{R}^d)}\right),
 \label{ch4Gronwall}
 \end{eqnarray}
 for all $p\in[2,\infty)$.
 
 Using Lemma \ref{ch4lemma21},  Lemma \ref{ch4lemma22} and the inequality $\dfrac{T^{\alpha}}{M^{\alpha}}\leq C_{\alpha}\,\Delta t^{1/2}$, it follows from \eqref{ch4Gronwall} that 
 \begin{eqnarray}
 \label{compenfinal}
 \left\|\sup\limits_{t\in[0,T]}\|X_t-\overline{Y}^M_t\|\right\|_{L^p(\Omega, \mathbb{R})}=\left(\mathbb{E}\left[\sup_{t\in[0,T]}\left\|X_t-\overline{Y}^M_t\right\|^p\right]\right)^{1/p}\leq C_p \Delta t^{1/2},
 \end{eqnarray}
 for all $p\in[2,\infty)$ and all $M\in\mathbb{N}$. Using Holder's inequality, one can prove  that \eqref{compenfinal} holds for $p\in[1,2]$. The proof of the theorem is complete.
 \subsection{Proof of \thmref{ch4theorem1} for STS scheme ($\chi^M_t= \overline{Z}^M_t$)}
 \label{stss}
 After replacing the increment of the poisson process $\Delta N_n^M$ by its compensated form $\Delta\overline{N}_n^M$ in STS  \eqref{sts}, we obtain an equivalent scheme similar to the compensated tamed scheme (CTS). 
 Therefore, the proof of the strong convergence of the STS  follows exactly the one of compensated tamed scheme (CTS) \eqref{tamedjump} in \secref{tamedjs}.  Here we should make the following changes for our semi-tamed scheme
 \begin{eqnarray*}
 \alpha^M_k := \mathbbm{1}_{\{\|Z^M_k\|\geq 1\}}\left\langle\dfrac{Z^M_k+u_{\lambda}(Z^M_n)\Delta t}{\|Z^M_k\|}, \dfrac{g(Z^M_k)}{\|Z^M_k\|}\Delta W^M_k\right\rangle,\\\\
 \beta^M_k := \mathbbm{1}_{\{\|Z^M_k\|\geq 1\}}\left\langle\dfrac{Z^M_k+u_{\lambda}(Z^M_n)\Delta t}{\|Z^M_k\|}, \dfrac{h(Z^M_k)}{\|Z^M_k\|}\Delta\overline{N}^M_k\right\rangle,\\
 \end{eqnarray*}
 where $u_{\lambda}=u+\lambda h$.  The function $v$ which is one-side Lipschitz (see \rmref{Lips}) should replace  the  function
 $f_\lambda$ in  the proof of  the compensated tamed scheme (CTS).
 It follows from the proof  in \secref{tamedjs} that there exists a constant $C_p>0$ such that  
  \begin{eqnarray}
 \left(\mathbb{E}\left[\sup_{t\in[0,T]}\left\|X_t-\overline{Z}^M_t\right\|^p\right]\right)^{1/p}\leq C_p\Delta t^{1/2},
 \end{eqnarray}
 for all $p\in[1,\infty)$.
 Details  can also be found in \cite{Mukam}.
 
 \subsection{Proof of \thmref{ch4theorem1} for NCTS scheme ($\chi^M_t= \overline{X}^M_t$)}
 \label{nctss}
    Using the relation $\Delta\overline{N}^M_n=\Delta N^M_n-\lambda\Delta t$,  the continuous interpolation of \eqref{dncts} can  be expressed in the following form
    \begin{eqnarray*}
    \overline{X}^M_t =X^M_n+\lambda(t-n\Delta t)h(X^M_n)+\dfrac{(t-n\Delta t)f(X^M_n)}{1+\Delta t^{\alpha} \|f(X^M_n)\|}+g(X^M_n)(W_t-W_{n\Delta t})+h(X^M_n)(\overline{N}_t-\overline{N}_{n\Delta t}),
    \end{eqnarray*}
    for all $t\in[n\Delta t, (n+1)\Delta t)$.
    
   The numerical solution of non compensated tamed scheme (NCTS) \eqref{ncts}  is  also  equivalent to 
   \begin{eqnarray}
   X^M_{n+1}&=&X^M_n+\dfrac{\Delta tf(X^M_n)}{1+\Delta t^{\alpha}\|f(X^M_n)\|}+g(X^M_n)\Delta W^M_n+h(X^M_n)\Delta N^M_n\nonumber\\
   &=&X^M_n+\lambda h(X^M_n) \Delta t+\dfrac{\Delta tf(X^M_n)}{1+\Delta t^{\alpha}\|f(X^M_n)\|}+g(X^M_n)\Delta W^M_n\nonumber\\
   &+&h(X^M_n)\Delta \overline{N}^M_n.
   \label{ch5contam}
   \end{eqnarray}
  The functions $\lambda h$ and $f$ in the numerical solution of the scheme NCTS given by \eqref{ncts} (or \eqref{ch5contam}) satisfy  respectively the same conditions as
  the  $u_{\lambda}$ and $v$ in the numerical solution of  the STS given by \eqref{sts}.
  Hence, it follows from the proof  in \secref{stss} that there exists a constant $C_p>0$ such that  
  \begin{eqnarray}
 \left(\mathbb{E}\left[\sup_{t\in[0,T]}\left\|X_t-\overline{X}^M_t\right\|^p\right]\right)^{1/p}\leq C_p\Delta t^{1/2},
 \end{eqnarray}
 for all $p\in[1,\infty)$.

\section{Linear mean-square stability} 
\label{linearsta}
The goal of  this section is to find the time  
 step-size  limit for which the tamed Euler scheme and the semi-tamed Euler scheme are stable
 in  the linear mean-square sense. 
 For the scalar linear test problem, the concept of A-stability of a numerical method may be interpreted as ``problem
stable $\Rightarrow$ method stable for all $\Delta t$''.
We consider the following linear test equation with real and scalar coefficients
\begin{eqnarray}
 dX(t)=  aX(t^{-})dt +bX(t^{-})dW(t)+cX(t^{-})dN(t),  \hspace{0.5cm}
 X(0)=X_0,
 \label{ch5linearequation}
\end{eqnarray}
where $X_0$ satisfied $\mathbb{E}\|X_0\|^2<\infty$. In the sequel  of  this paper we take $\alpha \in [0,1]$.
It has been proved in \cite{Desmond2} that the exact solution of \eqref{ch5linearequation} is mean-square stable if and only if 
  \begin{eqnarray}
  \label{lstable}
  \underset{t \rightarrow \infty}{\lim} \mathbb{E}(X(t)^2)=0  \Leftrightarrow l :=2a+b^2+\lambda c(2+c)<0.
  \end{eqnarray}
  Using the discrete form  of \eqref{ch5linearequation}, the numerical schemes \eqref{sts} and \eqref{ncts} will be  mean-square stable if  $l<0$ and 
   \begin{eqnarray}
   \underset{n \rightarrow \infty}{\lim} \mathbb{E} (Y_n^2)= \underset{n \rightarrow \infty}{\lim} \mathbb{E}(X_n^2)= 0.
   \end{eqnarray}
The following result  provides the  time step-size limit for which the semi-tamed scheme (STS) \eqref{sts}   is mean-square stable.
 \begin{theorem}
 \label{th:sts}
 Assume  that $l<0$, then the semi-tamed  scheme \eqref{sts} is mean-square stable if and only if
 \begin{eqnarray*}
 \Delta t<\dfrac{-l}{(a+\lambda c)^2}.
 \end{eqnarray*}
 \end{theorem}
 \begin{proof}
 Note  that if $l<0$, then $a+\lambda c <0$. Applying the semi-tamed Euler scheme to \eqref{ch5linearequation} and  using  the compensated 
 Poisson process $\overline{N}(t):=N(t)-\lambda t$ leads to
 \begin{eqnarray}
 Y_{n+1}=Y_n+aY_n\Delta t+\lambda cY_n\Delta t+bY_n\Delta W_n+cY_n\Delta\overline{N}_n.
 \label{ch5compen1}
 \end{eqnarray}
 Squaring both sides of \eqref{ch5compen1} leads to
 \begin{eqnarray}
 Y_{n+1}^2&=&Y_n^2+(a+\lambda c)^2\Delta t^2Y_n^2+b^2Y_n^2\Delta W_n^2+c^2Y_n^2\Delta\overline{N}_n^2 \nonumber\\
  && +2(a+\lambda c) \Delta t Y_n^2+2bY_n^2\Delta W_n\nonumber\\
 &&+2cY_n^2\Delta\overline{N}_n+2b(a+\lambda c)\Delta t\Delta W_n Y_n^2 \nonumber\\
 &&+2c(a+\lambda c)Y_n^2\Delta t\Delta\overline{N}_n+2bcY_n^2\Delta W_n\Delta\overline{N}_n.
 \label{ch5compen2}
 \end{eqnarray}
 Taking the expectation in both sides of \eqref{ch5compen2} and using the equalities 
 $\mathbb{E}(\Delta W_n^2)=\Delta t$, $\mathbb{E}(\Delta\overline{N}_n^2)=\lambda \Delta t$ and  $\mathbb{E}(\Delta W_n)=\mathbb{E}(\Delta\overline{N}_n)=0$ 
 with the fact  that $\Delta W_n$ and $ \Delta\overline{N}_n$ are independent leads to
 \begin{eqnarray*}
 \mathbb{E}|Y_{n+1}|^2=(1+(a+\lambda c)^2\Delta t^2+(b^2+\lambda c^2+2a+2\lambda c)\Delta t)\mathbb{E}|Y_n|^2.
 \end{eqnarray*}
 So, the semi-tamed scheme is stable if and only if
 \begin{eqnarray*}
 1+(a+\lambda c)^2\Delta t^2+(b^2+\lambda c^2+2a+2\lambda c)\Delta t<1.
 \end{eqnarray*}
 That is 
 $\Delta t<\dfrac{-l}{(a+\lambda c)^2}$.
 \end{proof}
 
 The following result  provides the  time step-size limit for which the non compensated  tamed scheme (NCTS) \eqref{ncts} is stable.
 \begin{theorem}
 \label{thmncts}
 Assume  that $l<0$, then the tamed Euler scheme \eqref{ncts} is mean-square stable  if one   of  the  following conditions is satisfied 
 
 \begin{itemize}
\item[(i)]   $a(1+\lambda c\Delta t)\leq 0$, $2a-l>0$ and $\Delta t<\dfrac{2a-l}{a^2+\lambda^2c^2}$.\\
\item [(ii)]  $a(1+\lambda c\Delta t)> 0$ and $\Delta t<\dfrac{-l}{(a+\lambda c)^2}$.
\end{itemize}
\end{theorem}
\begin{proof}
Applying the tamed Euler scheme \eqref{ncts} to equation \eqref{ch5linearequation} leads to 
\begin{eqnarray}
X_{n+1}=X_n+\dfrac{a X_n\Delta t}{1+\Delta t^{\alpha}|aX_n|}+bX_n\Delta W_n+cX_n\Delta N_n.
\label{ch5eq1}
\end{eqnarray}
By squaring both sides of \eqref{ch5eq1} leads to 
\begin{eqnarray}
\label{sq}
X_{n+1}^2&=&X_n^2+\dfrac{a^2X^2_n\Delta t^2}{(1+\Delta t^{\alpha}|a X_n|)^2}+b^2 X_n^2\Delta W_n^2+c^2 X_n^2\Delta N_n^2+\dfrac{2a X_n^2\Delta t}{1+\Delta t^{\alpha}|a X_n|}\nonumber\\
&&+2b X_n^2\Delta W_n
+2cX_n^2\Delta N_n+\dfrac{2abX_n^2\Delta t}{1+\Delta t^{\alpha}|aX_n|}\Delta W_n\nonumber\\
&&+\dfrac{2acX_n^2\Delta t}{1+\Delta t^{\alpha}|a X_n|}\Delta N_n+2bcY_n^2\Delta W_n\Delta N_n.
\end{eqnarray}
Using the inequality $ \dfrac{a^2X_n^2\Delta t^2}{(1+\Delta t^{\alpha}|a X_n|)^2}<a^2X_n^2\Delta t^2$, \eqref{sq} becomes 
\begin{eqnarray*}
\label{sq1}
X_{n+1}^2&\leq &X_n^2+a^2X^2_n\Delta t^2+b^2 X_n^2\Delta W_n^2+c^2 X_n^2\Delta N_n^2+\dfrac{2a X_n^2\Delta t}{1+\Delta t^{\alpha}|a X_n|}\nonumber\\
&&+2bX_n^2\Delta W_n
+2c X_n^2\Delta N_n+\dfrac{2ab X_n^2\Delta t}{1+\Delta t^{\alpha}|aX_n|}\Delta W_n\nonumber\\
&& +\dfrac{2acX_n^2\Delta t}{1+\Delta t^{\alpha}|aX_n|}\Delta N_n+2bcX_n^2\Delta W_n\Delta N_n.
\end{eqnarray*}
Taking the  expectation in both sides of \eqref{sq1},  using the independence of the Brownian and Poisson processes,
and  the fact that $\mathbb{E}(\Delta W_n)=0$, $\mathbb{E}(\Delta W_n^2)=\Delta t$, $\mathbb{E}(\Delta N_n)=\lambda\Delta t$, $\mathbb{E}(\Delta N_n^2)=\lambda \Delta t+\lambda^2\Delta t^2$ 
leads to
\begin{eqnarray}
\mathbb{E}|X_{n+1}|^2&\leq& \left[1+a^2\Delta t^2+b^2\Delta t+\lambda^2c^2\Delta t^2+(2+ c)\lambda c\Delta t\right]\mathbb{E}|X_n|^2\nonumber\\
&+&\mathbb{E}\left(\dfrac{2aX^2_n\Delta t(1+\lambda c\Delta t)}{1+\Delta t^{\alpha}|aX_n|}\right).
\label{ch5eq2}
\end{eqnarray}
 If $a(1+\lambda c\Delta t)\leq 0$, it follows from \eqref{ch5eq2} that
\begin{eqnarray*}
\mathbb{E}|X_{n+1}|^2\leq \{1+(a^2+\lambda^2c^2)\Delta t^2+[b^2+\lambda c(2+c)]\Delta t\}\mathbb{E}|X_n|^2.
\end{eqnarray*}
Therefore, the numerical solution is stable if 
\begin{eqnarray*}
1+(a^2+\lambda^2c^2)\Delta t^2+[b^2+\lambda c(2+c)]\Delta t<1.
\end{eqnarray*}
That is  $\Delta t<\dfrac{2a-l}{a^2+\lambda^2c^2}$.

If $a(1+\lambda c\Delta t)> 0$, using the fact that $$\dfrac{2a X_n^2\Delta t(1+\lambda c\Delta t)}{1+\Delta t^{\alpha}|aX_n|}< 2aX_n^2\Delta t(1+\lambda c\Delta t),$$
inequality \eqref{ch5eq2} becomes
\begin{eqnarray}
\label{ch5eq3}
\lefteqn{\mathbb{E}|X_{n+1}|^2}  && \nonumber\\
&\leq&\left[1+a^2\Delta t^2+b^2\Delta t+\lambda^2c^2\Delta t^2+2\lambda ac\Delta t^2 \right.\nonumber\\ 
&& \left.+(2+ c)\lambda c\Delta t+2a\Delta t\right]\mathbb{E}|X_n|^2.
\end{eqnarray}
Therefore, it follows from \eqref{ch5eq3} that the numerical solution is stable if
 $$1+a^2\Delta t^2+b^2\Delta t+\lambda^2c^2\Delta t^2+2\lambda ac\Delta t^2+(2+ c)\lambda c\Delta t+2a\Delta t<1.$$ 
 That is $\Delta t<\dfrac{-l}{(a+\lambda c)^2}$.
\end{proof}
 \begin{remark}
In \thmref{thmncts}, we can  easily check that if $l<0$, we have
\begin{eqnarray*}
\left\lbrace \begin{array}{l}
 a(1+\lambda c\Delta t)\leq0,\\
 2a-l>0 \\
 \Delta t<\dfrac{2a-l}{a^2+\lambda^2c^2}\\
 \end{array} \right. 
 \Leftrightarrow 
 \left\lbrace \begin{array}{l}
   a \in (l/2,0], c\geq 0,\\
   \Delta t <\dfrac{2a-l}{a^2+\lambda^2 c^2}\\
 \end{array} \right.
 \bigcup
 \left\lbrace \begin{array}{l}
   a \in (l/2,0), c<0,\\
   \Delta t <\dfrac{2a-l}{a^2+\lambda^2 c^2} \\
   \Delta t\leq \dfrac{-1}{ \lambda c} \\
 \end{array} \right.\\
 \bigcup
 \left\lbrace \begin{array}{l}
   a>0, c<0 \\
   \Delta t <\dfrac{2a-l}{a^2+\lambda^2 c^2}  \\
  \Delta t \geq  \dfrac{-1}{ \lambda c}
 \end{array} \right. 
\end{eqnarray*}
\begin{eqnarray*}
\left\lbrace \begin{array}{l}
 a(1+\lambda c\Delta t)>0,\\
 \Delta t<\dfrac{-l}{(a+\lambda c)^2}\\
 \end{array} \right. 
 \Leftrightarrow 
 \left\lbrace \begin{array}{l}
   a >0, c>0,\\
   \Delta t < \dfrac{-l}{(a+\lambda c)^2}\\
 \end{array} \right.
 \bigcup
 \left\lbrace \begin{array}{l}
   a >0, c<0,\\
   \Delta t <\dfrac{-l}{(a+\lambda c)^2} \wedge \dfrac{-1}{ \lambda c} \\
 \end{array} \right.\\
 \bigcup
 \left\lbrace \begin{array}{l}
   a<0, c<0 \\
   \Delta t <\dfrac{-l}{(a+\lambda c)^2}  \\
  \Delta t >  \dfrac{-1}{ \lambda c}.
 \end{array} \right. 
\end{eqnarray*}
 \end{remark}
\begin{remark}
  Note that from the above studies,  we can  deduce the   linear stabilities of schemes   \eqref{ncts} and \eqref{sts} for SDEs  without jump by  just take  $c=0$ in \eqref{ch5linearequation}, 
  \thmref{th:sts} and \thmref{thmncts}.
  \end{remark}

\section{Nonlinear mean-square stability}
\label{nlinearsta}
In this section, we focus on the  exponential mean-square stability of
the approximation \eqref{sts}. 
We follow closely \cite{semitamed,Desmond2} and assume that $f(0)=u(0)=v(0)=g(0)=h(0)=0$  and $\mathbb{E}\Vert X_0 \Vert^2<\infty$.
It has been proved in \cite{Desmond2} that under the following conditions
\begin{eqnarray}
\label{l1}
\langle x-y, f(x)-f(y)\rangle&\leq& \mu\|x-y\|^2,\\
\label{l2}
\|g(x)-g(y)\|^2&\leq& \sigma \|x-y\|^2,\\
\label{l3}
\|h(x)-h(y)\|^2 &\leq & \gamma \|x-y\|^2,
\end{eqnarray}
 for all $x,y\in\mathbb{R}^d$, where $\mu$, $\sigma$ and $ \gamma$ are constants, the exact solution of SDE \eqref{model} is  nonlinear mean-square stable  
 if $\alpha :=2\mu+\sigma+\lambda  \sqrt{\gamma}(\sqrt{\gamma}+2)<0$.
   Indeed under the above assumptions,  we have \cite[Theorem 4]{Desmond2}
 \begin{eqnarray*}
\mathbb{E}\Vert X(t) \Vert^2 \leq \mathbb{E}\Vert X_0 \Vert^2 e^{\alpha t}.
 \end{eqnarray*}
 So, if $\alpha <0$ we have $ \underset { t \rightarrow \infty}{\lim} \mathbb{E}\Vert X(t) \Vert^2=0$ and  the exact solution $X$ is exponentially mean-square stable.
 
 In the sequel of  this section, we will  use some weaker assumptions, which of course imply that the conditions \eqref{l1}-\eqref{l3} hold.
   More precisely, for  nonlinear stability of  the semi-tamed scheme (STS), we also make  the following assumptions.
  \begin{Assumption}
\label{ch5assumption2}
 There exist some  positive constants $\rho$, $\beta$,$\overline{\beta}$, $K$, $C$, $\theta$ and $a>1$  such that
 \begin{eqnarray*}
 \langle x-y, u(x)-u(y)\rangle\leq -\rho\|x-y\|^2,\hspace{1cm} \|u(x)-u(y)\|\leq K\|x-y\|,\\
 \langle x-y, v(x)-v(y)\rangle\leq-\beta\|x-y\|^{a+1}, \hspace{1.5cm} \|v(x)\|\leq \overline{\beta}\|x\|^a,\\
 \|g(x)-g(y)\|\leq\theta\|x-y\|,\hspace{2cm} \|h(x)-h(y)\|\leq C\|x-y\|.
 \end{eqnarray*}
 \end{Assumption}
We denote by $\alpha_1: =-2\rho+\theta^2+\lambda  C(C+2)$ and  we will always assume that $\alpha_1<0$  to ensure  the stability of  the exact solution.
 The nonlinear stability  result for   the scheme (STS) is given in the following theorem.
  \begin{theorem}
 \label{nt1}
 Under Assumptions \ref{ch5assumption2} and the further hypothesis $2\beta-\overline{\beta}>0$, 
 for any stepsize $\Delta t$ such that $\Delta t<\dfrac{-\alpha_1}{(K+\lambda C)^2}\wedge\dfrac{2\beta}{[2(K+\lambda C)+\overline{\beta}]\overline{\beta}}\wedge\dfrac{2\beta-\overline{\beta}}{2(K+\lambda c)\overline{\beta}}$,
  there exists  a constant  $\gamma=\gamma (\Delta t) >0$ such that
 \begin{eqnarray*}
 \mathbb{E}\|Y_n\|^2\leq \mathbb{E}\|X_0\|^2 e^{-\gamma\, t_n}, \,\, t_n=n\,\Delta t,\qquad \underset{ \Delta t \rightarrow 0}{\lim}\gamma(\Delta t)= -\alpha_1,
 \end{eqnarray*}  
 and  the  numerical solution \eqref{sts} is exponentiallly mean-square stable.
 \end{theorem}
 \begin{proof}
 Recall that the numerical solution \eqref{sts} can also be written as 
 \begin{eqnarray*}
 \label{rsts}
 Y_{n+1}=Y_n+\Delta tu_{\lambda}(Y_n)+\dfrac{\Delta tv(Y_n)}{1+\Delta t^{\alpha} \|v(Y_n)\|}+g(Y_n)\Delta W_n+h(Y_n)\Delta\overline{N}_n,
 \end{eqnarray*}
  where $u_{\lambda}=u+\lambda h$. Taking the inner product in both sides  of \eqref{rsts} leads to 
 \begin{eqnarray}
 \|Y_{n+1}\|^2&=&\|Y_n\|^2+\Delta t^2\|u_{\lambda}(Y_n)\|^2+\dfrac{\Delta t^2 \|v(Y_n)\|^2}{\left(1+\Delta t^{\alpha}\|v(Y_n)\|\right)^2}+\|g(Y_n)\|^2 \|\Delta W_n\|^2\nonumber\\
 &+& \|h(Y_n)\|^2|\Delta \overline{N}_n |^2
 +2\Delta t\langle Y_n, u_{\lambda}(Y_n)\rangle+2\Delta t\left\langle Y_n, \dfrac{v(Y_n)}{1+\Delta t^{\alpha}\|v(Y_n)\|}\right\rangle\nonumber\\
 &+& 2\langle Y_n, g(Y_n)\Delta W_n\rangle + 2\langle Y_n, h(Y_n)\Delta\overline{N}_n\rangle \nonumber\\
 &+& 2\Delta t^2\left\langle u_{\lambda}(Y_n), \dfrac{v(Y_n)}{1+\Delta t^{\alpha}||v(Y_n)||}\right\rangle
 +2\Delta t\langle u_{\lambda}(Y_n), g(Y_n)\Delta W_n\rangle \nonumber\\
 &+&2\Delta t\langle u_{\lambda}(Y_n), h(Y_n)\Delta\overline{N}_n\rangle
 +2\Delta t\left\langle\dfrac{v(Y_n)}{1+\Delta t^{\alpha}||v(Y_n)\|}, g(Y_n)\Delta W_n\right\rangle \nonumber\\
 &+& 2\Delta t\left\langle\dfrac{v(Y_n)}{1+\Delta t^{\alpha}\|v(Y_n)\|}, h(Y_n)\Delta\overline{N}_n\right\rangle +2\langle g(Y_n)\Delta W_n, h(Y_n)\Delta\overline{N}_n\rangle.
 \label{ch5meansemi1}
 \end{eqnarray}
 Using Assumptions \ref{ch5assumption2} together with the fact that $v(0)=0$, it follows that 
 \begin{eqnarray}
 2\Delta t\left\langle Y_n, \dfrac{v(Y_n)}{1+\Delta t^{\alpha} \|v(Y_n)\|}\right\rangle\leq\dfrac{-2\beta\Delta t\|Y_n\|^{a+1}}{1+\Delta t^{\alpha}\|v(Y_n)\|}
 \label{ch5meansemi2}
 \end{eqnarray}
 \begin{eqnarray}
 2\Delta t^2\left<u_{\lambda}(Y_n), \dfrac{v(Y_n)}{1+\Delta t^{\alpha}\|v(Y_n)\|}\right>&\leq &\dfrac{2\Delta t^2\|u_{\lambda}(Y_n)\|\|v(Y_n)\|}{1+\Delta t^{\alpha}\|v(Y_n\|}\nonumber\\
 &\leq&\dfrac{2\Delta t^2(K+\lambda C)\overline{\beta}\|Y_n\|^{a+1}}{1+\Delta t^{\alpha}\|v(Y_n)\|}.
 \label{ch5meansemi3}
 \end{eqnarray}
 By setting  $\Omega_n:=\{\omega\in\Omega : \|Y_n\|>1\}$, we have  
 on $\Omega_n$ 
 \begin{eqnarray}
 \dfrac{\Delta t^2 \|v(Y_n)\|^2}{\left(1+\Delta t^{\alpha}\|v(Y_n)\|\right)^2}\leq\dfrac{\Delta t\|v(Y_n)\|}{1+\Delta t^{\alpha}\|v(Y_n)\|}\leq\dfrac{\overline{\beta}\Delta t\|Y_n\|^{a+1}}{1+\Delta t^{\alpha}\|v(Y_n)\|}.
 \label{ch5meansemi4}
 \end{eqnarray}
 Therefore using \eqref{ch5meansemi2}, \eqref{ch5meansemi3} and \eqref{ch5meansemi4} in \eqref{ch5meansemi1} yields
\begin{eqnarray}
 \|Y_{n+1}\|^2& \leq &\|Y_n\|^2+\Delta t^2 \|u_{\lambda}(Y_n)\|^2+ \|g(Y_n)\|^2\|\Delta W_n\|^2+\|h(Y_n)\|^2|\Delta\overline{N}_n|^2\nonumber\\
 &+&2\Delta t\langle Y_n, u_{\lambda}(Y_n)\rangle+2\langle Y_n, g(Y_n)\Delta W_n\rangle
 +2\langle Y_n, h(Y_n)\Delta\overline{N}_n\rangle \nonumber\\
 &+&2\Delta t\langle u_{\lambda}(Y_n), g(Y_n)\Delta W_n\rangle+2\Delta t\langle u_{\lambda}(Y_n), h(Y_n)\Delta\overline{N}_n\rangle\nonumber\\
 &+&2\Delta t\left\langle\dfrac{v(Y_n)}{1+\Delta t^{\alpha}\|v(Y_n)\|}, g(Y_n)\Delta W_n\right\rangle\nonumber\\
 &+& 2\Delta t\left\langle\dfrac{v(Y_n)}{1+\Delta t^{\alpha}\|v(Y_n)\|}, h(Y_n)\Delta\overline{N}_n\right\rangle
 +2\langle g(Y_n)\Delta W_n, h(Y_n)\Delta\overline{N}_n\rangle \nonumber\\
 &+&\dfrac{\left[-2\beta\Delta t+2(K+\lambda C)\overline{\beta}\Delta t^2+\overline{\beta}\Delta t\right]\|Y_n\|^{a+1}}{1+\Delta t^{\alpha}\|v(Y_n)\|}.
 \label{ch5meansemi4a}
 \end{eqnarray} 
 Since $\Delta t<\dfrac{2\beta-\overline{\beta}}{2(K+\lambda C)\overline{\beta}}$, we have  $-2\beta\Delta t+2(K+\lambda C)\overline{\beta}\Delta t^2+\overline{\beta}\Delta t<0$ and  \eqref{ch5meansemi4a} becomes
 \begin{eqnarray}
 \|Y_{n+1}\|^2&\leq & \|Y_n\|^2+\Delta t^2\|u_{\lambda}(Y_n)\|^2+2\Delta t\langle Y_n, u_{\lambda}(Y_n)\rangle+\|g(Y_n)\|^2\|\Delta W_n\|^2\nonumber\\
 &+&\|h(Y_n)\|^2 |\Delta\overline{N}_n|^2+2\langle Y_n,g(Y_n)\Delta W_n\rangle+2\langle Y_n, h(Y_n)\Delta\overline{N}_n\rangle\nonumber\\
 &+&2\Delta t\langle u_{\lambda}(Y_n), g(Y_n)\Delta W_n\rangle+2\Delta t\langle u_{\lambda}(Y_n), h(Y_n)\Delta\overline{N}_n\rangle\nonumber\\
 &+&2\Delta t\left\langle \dfrac{v(Y_n)}{1+\Delta t^{\alpha} \|v(Y_n)\|}, g(Y_n)\Delta W_n\right\rangle \nonumber\\
 &+&2\Delta t\left\langle\dfrac{v(Y_n)}{1+\Delta t^{\alpha}\|v(Y_n)\|}, h(Y_n)\Delta\overline{N}_n\right\rangle
 +2\langle g(Y_n)\Delta W_n, h(Y_n)\Delta\overline{N}_n\rangle.
 \label{ch5meansemi4b}
 \end{eqnarray}
 However on  $\Omega_n^c$,  we have 
 \begin{eqnarray}
 \dfrac{\Delta t^2 \|v(Y_n)\|^2}{\left(1+\Delta t^{\alpha} \|v(Y_n)\|\right)^2}&\leq&
 \dfrac{\Delta t^2\|v(Y_n)\|^2}{1+\Delta t^{\alpha}\|v(Y_n)\|} \nonumber\\
 &\leq&\dfrac{\overline{\beta}^2\Delta t^2\|Y_n\|^{2a}}{1+\Delta t^{\alpha}
 \|v(Y_n)\|} \nonumber\\
 &\leq& \dfrac{\overline{\beta}^2\Delta t^2 \|Y_n\|^{a+1}}{1+\Delta t^{\alpha}\|v(Y_n)\|}.
 \label{ch5meansemi5}
 \end{eqnarray}
 Therefore, using \eqref{ch5meansemi2}, \eqref{ch5meansemi3} and \eqref{ch5meansemi5} in \eqref{ch5meansemi1} yields
  \begin{eqnarray}
 \|Y_{n+1}\|^2&\leq &\|Y_n\|^2+\Delta t^2 \|u_{\lambda}(Y_n)\|^2+2\Delta t\langle Y_n, u_{\lambda}(Y_n)\rangle+\|g(Y_n)\|^2 \|\Delta W_n\|^2\nonumber\\
 &+&\|h(Y_n)\|^2 |\Delta\overline{N}_n|^2+2\langle Y_n,g(Y_n)\Delta W_n\rangle+2\langle Y_n, h(Y_n)\Delta\overline{N}_n\rangle\nonumber\\
 &+&2\Delta t\langle u_{\lambda}(Y_n), g(Y_n)\Delta W_n\rangle+2\Delta t\langle u_{\lambda}(Y_n), h(Y_n)\Delta\overline{N}_n\rangle\nonumber\\
 &+&2\Delta t\left\langle \dfrac{v(Y_n)}{1+\Delta t^{\alpha}\|v(Y_n)\|}, g(Y_n)\Delta W_n\right\rangle\nonumber \\
 &+&2\Delta t\left\langle\dfrac{v(Y_n)}{1+\Delta t^{\alpha}\|v(Y_n)\|}, h(Y_n)\Delta\overline{N}_n\right\rangle 
 +2\langle g(Y_n)\Delta W_n, h(Y_n)\Delta\overline{N}_n\rangle \nonumber\\
 &+&\dfrac{\left[-2\beta\Delta t+2(K+\lambda C)\overline{\beta}\Delta t^2+\overline{\beta}^2\Delta t^2\right]\|Y_n\|^{a+1}}{1+\Delta t ^{\alpha} \|v(Y_n)\|}.
 \label{ch5meansemi5a}
 \end{eqnarray}
 Since  $\Delta t<\dfrac{2\beta}{[2(K+\lambda C)+\overline{\beta}]\overline{\beta}}$, we have $-2\beta\Delta t+2(K+\lambda C)\overline{\beta}\Delta t^2+\overline{\beta}^2\Delta t^2<0$ and 
  \eqref{ch5meansemi5a} becomes 
 \begin{eqnarray}
 \|Y_{n+1}\|^2&\leq &\|Y_n\|^2+\Delta t^2\|u_{\lambda}(Y_n)\|^2+2\Delta t\langle Y_n, u_{\lambda}(Y_n)\rangle+\|g(Y_n)\|^2\|\Delta W_n\|^2\nonumber\\
 &+&\|h(Y_n)\|^2 |\Delta\overline{N}_n|^2+2\langle Y_n,g(Y_n)\Delta W_n\rangle+2\langle Y_n, h(Y_n)\Delta\overline{N}_n\rangle\nonumber\\
 &+&2\Delta t\langle u_{\lambda}(Y_n), g(Y_n)\Delta W_n\rangle+2\Delta t\langle u_{\lambda}(Y_n), h(Y_n)\Delta\overline{N}_n\rangle\nonumber\\
 &+&2\Delta t\left\langle \dfrac{v(Y_n)}{1+\Delta t^{\alpha} \|v(Y_n)\|}, g(Y_n)\Delta W_n\right\rangle \nonumber\\
 &+&2\Delta t\left\langle\dfrac{v(Y_n)}{1+\Delta t^{\alpha}\|v(Y_n)\|}, h(Y_n)\Delta\overline{N}_n\right\rangle
 +2\langle g(Y_n)\Delta W_n, h(Y_n)\Delta\overline{N}_n\rangle.
 \label{ch5meansemi5b}
 \end{eqnarray}
  Finally, from the discussion above on $\Omega_n$ and $\Omega_n^c$, 
  it follows from  the  inequality $$\Delta t \leq \dfrac{2\beta}{[2(K+\lambda C)+\overline{\beta}]\overline{\beta}}\wedge\dfrac{2\beta-\overline{\beta}}{2(K+\lambda c)\overline{\beta}}$$
  and  \eqref{ch5meansemi5a} that on $\Omega$, we have
 \begin{eqnarray}
 \|Y_{n+1}\|^2&\leq &\|Y_n\|^2+\Delta t^2 \|u_{\lambda}(Y_n)\|^2+2\Delta t\langle Y_n, u_{\lambda}(Y_n)\rangle+\|g(Y_n)\|^2 \|\Delta W_n\|^2\nonumber\\
 &+&\|h(Y_n)\|^2|\Delta\overline{N}_n|^2+2\langle Y_n,g(Y_n)\Delta W_n\rangle+2\langle Y_n, h(Y_n)\Delta\overline{N}_n\rangle\nonumber\\
 &+&2\Delta t\langle u_{\lambda}(Y_n), g(Y_n)\Delta W_n\rangle+2\Delta t\langle u_{\lambda}(Y_n), h(Y_n)\Delta\overline{N}_n\rangle\nonumber\\
 &+&2\Delta t\left\langle \dfrac{v(Y_n)}{1+\Delta t^{\alpha}\|v(Y_n)\|}, g(Y_n)\Delta W_n\right\rangle \nonumber\\
 &+&2\Delta t\left\langle\dfrac{v(Y_n)}{1+\Delta t^{\alpha} \|v(Y_n)\|}, h(Y_n)\Delta\overline{N}_n\right\rangle
 +2\langle g(Y_n)\Delta W_n, h(Y_n)\Delta\overline{N}_n\rangle.
 \label{ch5meansemi6}
 \end{eqnarray}
 Taking the expectation in both sides of \eqref{ch5meansemi6} 
 leads to 
 \begin{eqnarray}
 \mathbb{E}\|Y_{n+1}\|^2&\leq&\mathbb{E}\|Y_n\|^2+\Delta t^2\mathbb{E}\|u_{\lambda}(Y_n)\|^2+2\Delta t\mathbb{E}\langle Y_n, u_{\lambda}(Y_n)\rangle+\Delta t\mathbb{E}\|g(Y_n)\|^2\nonumber\\
 &+&\lambda\Delta t\mathbb{E}\|h(Y_n)\|^2.
 \label{ch5meansemi7}
 \end{eqnarray}
 From Assumptions \ref{ch5assumption2}, we have 
 \begin{eqnarray*}
 \|u_{\lambda}(Y_n)\|^2\leq (K+\lambda C)^2\|Y_n\|^2 \hspace{0.5cm} \text{and} \hspace{0.5cm}\langle Y_n, u_{\lambda}(Y_n)\rangle\leq (-\rho+\lambda C)\|Y_n\|^2.
 \end{eqnarray*}
 So inequality \eqref{ch5meansemi7} yields
 \begin{eqnarray}
 \label{fin}
 \mathbb{E}\|Y_{n+1}\|^2&\leq&\mathbb{E}\|Y_n\|^2+(K+\lambda C)^2\Delta t^2\mathbb{E}\|Y_n\|^2+2(-\rho+\lambda C)\Delta t\mathbb{E}\|Y_n\|^2+\theta^2\Delta t\mathbb{E}\|Y_n\|^2\nonumber\\
 &+&\lambda C^2\Delta t\mathbb{E}\|Y_n\|^2\nonumber\\
 &=&\left[1-2\rho\Delta t+(K+\lambda C)^2\Delta t^2+2\lambda C\Delta t+\theta^2\Delta t+\lambda C^2\Delta t\right]\mathbb{E}\|Y_n\|^2.
 \end{eqnarray}
  If $$\Delta t \leq \dfrac{2\beta}{[2(K+\lambda C)+\overline{\beta}]\overline{\beta}}\wedge\dfrac{2\beta-\overline{\beta}}{2(K+\lambda c)\overline{\beta}},$$
  iterating the \eqref{fin} leads to
 \begin{eqnarray*}
 \mathbb{E}\|Y_n\|^2\leq\left[1-2\rho\Delta t+(K+\lambda C)^2\Delta t^2+2\lambda C\Delta t+\theta^2\Delta t+\lambda C^2\Delta t\right]^n\mathbb{E}\| X_0\|^2.
 \end{eqnarray*}
 The stability occurs if and only if  $\underset{n \rightarrow \infty}{\lim} \mathbb{E}\Vert Y_n \Vert^2= 0$, so we should  also have 
 \begin{eqnarray*}
 1-2\rho\Delta t+(K+\lambda C)^2\Delta t^2+2\lambda C\Delta t+\theta^2\Delta t+\lambda C^2\Delta t<1.
 \end{eqnarray*}
 That is 
 \begin{eqnarray}
 \Delta t<\dfrac{-[-2\rho+\theta^2+\lambda C(2+C)]}{(K+\lambda C)^2} =\dfrac{-\alpha_1}{(K+\lambda C)^2},
 \end{eqnarray}
 and there exists  a constant $\gamma=\gamma (\Delta t) >0$ such that
 \begin{eqnarray*}
 \mathbb{E}\|X_n\|^2\leq\left[1-2\rho\Delta t+(K+\lambda C)^2\Delta t^2+2\lambda C\Delta t+\theta^2\Delta t+\lambda C^2\Delta t\right]^n\mathbb{E}\| X_0\|^2\leq \mathbb{E}\|X_0\|^2 e^{-\gamma t_n}, 
 \end{eqnarray*}
 By the Taylor expansion, as 
 \begin{eqnarray*}
 \lefteqn {\ln(1-2\rho \Delta t+(K+\lambda C)^2\Delta t^2+2\lambda C\Delta t+\theta^2\Delta t+\lambda C^2\Delta t)}&& \\
 &=& -2\rho\Delta t+(K+\lambda C)^2\Delta t^2+2\lambda C\Delta t+\theta^2\Delta t+\lambda C^2\Delta t\ +...... ,
 \end{eqnarray*}
 we obviously have  $\underset{ \Delta t \rightarrow 0}{\lim}\gamma(\Delta t)=- (-2 \rho +\theta^2+\lambda C(2+C))=-\alpha_1$.
 \end{proof}
 
  To  analyse the nonlinear mean-square stability of the tamed Euler scheme (NCTS),  we make the following assumption. 
 
 \begin{Assumption}
 \label{ch5assumption3}
 There   positive constants $\beta$, $\overline{\beta}$,  $\theta$, $\mu$, $K$,  $\rho$, $C$  and $a>1$ such that :
 \begin{align}
 \langle x-y, f(x)-f(y)\rangle\leq &-\rho \|x-y\|^2-\beta \|x-y\|^{a+1},\nonumber\\
 \|f(x)\| \leq \overline{\beta}\| x \|^a+K\|x\|,\nonumber\\
 \|g(x)-g(y)\| \leq &\theta\|x-y\|,\qquad \,\,\|h(x)-h(y)\| \leq C\|x-y\|,\nonumber\\
  \langle x-y, h(x)-h(y)\rangle \leq &-\mu \|x-y\|^2.
  \label{assumparticular}
 \end{align}
 \end{Assumption}
 \begin{remark}
 \assref{ch5assumption3} is a consequence of \assref{ch5assumption2}, except \eqref{assumparticular}.
 \end{remark}
 
  Using \assref{ch5assumption3},  we can easily check  that the exact solution of \eqref{model} is  exponentiallly mean-square stable if
  $ \alpha_{2}:=  -2\rho +\theta^2+\lambda C(2 +C)<0$.
 \begin{theorem}
  Under \assref{ch5assumption3}, if $\alpha_3 :=K+\theta^2+\lambda C^2-2\lambda\mu C<0$ and $\overline{\beta}(1+2C)-2\beta<0$ for any stepsize
 \begin{eqnarray*}
\Delta t<\dfrac{-\alpha_3}{2K^2+\lambda^2C^2+2\lambda CK}\wedge\dfrac{\beta-C\overline{\beta}}{\overline{\beta}^2},
 \end{eqnarray*}
 there exists  a constant $\gamma=\gamma (\Delta t) >0$ such that
 \begin{eqnarray*}
 \mathbb{E}\|X_n\|^2\leq \mathbb{E}\|X_0\|^2 e^{-\gamma  t_n}, \,\, t_n=n\, \Delta t,  \underset{ \Delta t \rightarrow 0}{\lim}\gamma(\Delta t)=-\alpha_3.
 \end{eqnarray*}  
 and  the  numerical solution \eqref{ncts} is exponentiallly mean-square stable.
 \end{theorem}

\begin{proof}
 From equation \eqref{ncts}, we have 
 \begin{eqnarray}
 \|X_{n+1}\|^2&=&\|X_n\|^2+\dfrac{\Delta t^2 \|f(X_n)\|^2}{\left(1+\Delta t^{\alpha}\|f(X_n)\|\right)^2}+\|g(X_n)\Delta W_n\|^2+\|h(X_n)\Delta N_n\|^2\nonumber\\
 &+&2\left\langle X_n,\dfrac{\Delta tf(X_n)}{1+\Delta t^{\alpha} \|f(X_n)\|}\right\rangle+2\left\langle X_n+\dfrac{\Delta tf(X_n)}{1+\Delta t^{\alpha}\|f(X_n)\|}, g(X_n)\Delta W_n\right\rangle\nonumber\\
 &+&2\left\langle X_n+\dfrac{\Delta tf(X_n)}{1+\Delta t^{\alpha}\|f(X_n)\|}, h(X_n)\Delta N_n\right\rangle+2\langle g(X_n)\Delta W_n, h(X_n)\Delta N_n\rangle.
 \label{ch5meantamed1}
 \end{eqnarray}
 Using  \assref{ch5assumption3}, it follows that 
 \begin{eqnarray*}
 2\left\langle X_n, \dfrac{\Delta t f(X_n)}{1+\Delta t^{\alpha} \|f(X_n)\|}\right\rangle &\leq &\dfrac{-2\Delta t\rho\|X_n\|^2}{1+\Delta t^{\alpha}\|f(X_n)\|}-\dfrac{2\beta \Delta t \|X_n\|^{a+1}}{1+\Delta t ^{\alpha}\|f(X_n)\|}\\
 &\leq& -\dfrac{2\beta \Delta t\|X_n\|^{a+1}}{1+\Delta t^{\alpha}\|f(X_n)\|}.
 \label{ch5meantamed2} \\
 \|g(X_n)\Delta W_n\|^2 &\leq& \theta^2 \|X_n\|^2\|\Delta W_n\|^2\\
 \|h(X_n)\Delta N_n \|^2 &\leq& C^2\|X_n\|^2|\Delta N_n|^2.\label{ch5meantamed2a}\\
 2\langle X_n, h(X_n)\Delta N_n\rangle &=& 2\langle X_n, h(X_n)\rangle\Delta N_n \leq -2\mu\|X_n\|^2|\Delta N_n|,\\
 \label{ch5meantamed3}\\
 2\left\langle\dfrac{\Delta tf(X_n)}{1+\Delta t^{\alpha} \|f(X_n)\|}, h(X_n)\Delta N_n\right\rangle &\leq &\dfrac{2\Delta t\|f(X_n)\|\|h(X_n)\||\Delta N_n|}{1+\Delta t^{\alpha}\|f(X_n)\|}\nonumber\\
 &\leq &\dfrac{2\Delta tC\overline{\beta}\|X_n\|^{a+1}}{1+\Delta t^{\alpha}\|f(X_n)\|}|\Delta N_n|+2CK\Delta t\|X_n\|^2|\Delta N_n|
 \end{eqnarray*}

 So from \assref{ch5assumption3}, we have
 \begin{eqnarray}
\label{ch5assmeantamed} 
 2\left\langle X_n, \dfrac{\Delta tf(X_n)}{1+\Delta t^{\alpha} \|f(X_n)\|}\right\rangle&\leq& -\dfrac{2\beta \Delta t\|X_n\|^{a+1}}{1+\Delta t^{\alpha}\|f(X_n)\|} \nonumber\\
  \|g(X_n)\Delta W_n\|^2 &\leq& \theta^2 \|X_n\|^2\|\Delta W_n\|^2 \nonumber\\
 \|h(X_n)\Delta N_n\|^2 &\leq& C^2\|X_n\|^2|\Delta N_n|^2 \\
 2\langle X_n, h(X_n)\Delta N_n\rangle &\leq& -2\mu\|X_n\|^2|\Delta N_n|\nonumber\\
  2\left\langle\dfrac{\Delta tf(X_n)}{1+\Delta t^{\alpha} \|f(X_n)\|}, h(X_n)\Delta N_n\right\rangle 
  &\leq&\dfrac{2\Delta tC\overline{\beta}\|X_n\|^{a+1}}{1+\Delta t^{\alpha}\|f(X_n)\|}|\Delta N_n|+2CK \|X_n\|^2|\Delta N_n| \nonumber
 \end{eqnarray}
 Let $\Omega_n :=\{w\in\Omega : \|X_n(\omega)\|>1\}$,
 on $\Omega_n$,  using Assumption \ref{ch5assumption3}, we have
 \begin{eqnarray}
 \dfrac{\Delta t^2\|f(X_n)\|^2}{\left(1+\Delta t^{\alpha} \|f(X_n)\|\right)^2}&\leq& \dfrac{\Delta t\|f(X_n)\|}{1+\Delta t^{\alpha} \|f(X_n)\|}
 \leq \dfrac{\Delta t\overline{\beta}\|X_n\|^a}{1+\Delta t^{\alpha}\|f(X_n)\|}+K\Delta t\|X_n\|\nonumber\\
 &\leq& \dfrac{\Delta t\overline{\beta}\|X_n\|^{a+1}}{1+\Delta t^{\alpha}\|f(X_n)\|}+K\Delta t \|X_n\|^2.
 \label{ch5meantamed5}
 \end{eqnarray} 
 Therefore substituting \eqref{ch5assmeantamed} and \eqref{ch5meantamed5} in \eqref{ch5meantamed1} yields 
 \begin{eqnarray}
 \|X_{n+1}\|^2&\leq&\|X_n\|^2+K\Delta t\|X_n\|^2+\theta^2\|X_n\|^2\|\Delta W_n\|^2+C^2 \|X_n\|^2|\Delta N_n|^2\nonumber\\
 &+&2\left\langle X_n+\dfrac{\Delta t f(X_n)}{1+\Delta t^{\alpha}\|f(X_n)\|}, g(X_n)\Delta W_n\right\rangle \nonumber\\
 &-&2\mu\|X_n\|^2|\Delta N_n|+2CK\Delta t|\Delta N_n|\nonumber\\
 &+&2\langle g(X_n)\Delta W_n, h(X_n)\Delta N_n\rangle+\dfrac{\left[-2\beta\Delta t+\overline{\beta}\Delta t+2\overline{\beta}C\Delta t\right]\|X_n\|^{a+1}}{1+\Delta t^{\alpha}\|f(X_n)\|}.
 \label{ch5meantamed6}
 \end{eqnarray}
 Since  $\overline{\beta}(1+2C)-2\beta<0$, \eqref{ch5meantamed6} becomes 
 \begin{eqnarray}
 \|X_{n+1}\|^2&\leq&\|X_n\|^2+K\Delta t\|X_n\|^2+\theta^2\|X_n\|^2 \|\Delta W_n\|^2+C^2\|X_n\|^2|\Delta N_n|^2\nonumber\\
 &+&2\left\langle X_n+\dfrac{\Delta tf(X_n)}{1+\Delta t^{\alpha}\|f(X_n)\|}, g(X_n)\Delta W_n\right\rangle-2\mu\|X_n\|^2|\Delta N_n|\nonumber\\
 &+&2CK\Delta t|\Delta N_n|+2\langle g(X_n)\Delta W_n, h(X_n)\Delta N_n\rangle.
 \label{ch5meantamed7}
 \end{eqnarray}
 On $\Omega_n^c$, using Assumption \ref{ch5assumption3} and the inequality $(a+b)^2\leq 2a^2+2b^2$, we have 
 \begin{eqnarray}
 \dfrac{\Delta t^2 \|f(X_n)\|^2}{\left(1+\Delta t^{\alpha}\|f(X_n)\|\right)^2}&\leq& \dfrac{\Delta t^2\|f(X_n)\|^2}{1+\Delta t^{\alpha}\|f(X_n)\|}\nonumber\\ 
 &\leq& \dfrac{2\Delta t^2\overline{\beta}^2 \|X_n\|^{2a}}{1+\Delta t^{\alpha}\|f(X_n)\|}+2K^2\Delta t^2\|X_n\|^2\nonumber\\
 &\leq& \dfrac{2\Delta t^2\overline{\beta}^2\|X_n\|^{a+1}}{1+\Delta t^{\alpha}\|f(X_n)\|}+2K^2\Delta t^2\|X_n\|^2.
 \label{ch5meantamed8}
 \end{eqnarray}
 Therefore, using \eqref{ch5assmeantamed} and \eqref{ch5meantamed8}, \eqref{ch5meantamed1} becomes
 \begin{eqnarray}
 \|X_{n+1}\|^2&\leq&\|X_n \|^2+2K^2\Delta t^2 \| X_n\|^2+\theta^2 \|X_n\|^2\|\Delta W_n\|^2+C^2\|X_n\|^2|\Delta N_n|^2\nonumber\\
 &+&2\left\langle X_n+\dfrac{\Delta tf(X_n)}{1+\Delta t^{\alpha}\|f(X_n)\|}, g(X_n)\Delta W_n\right\rangle-2\mu\|X_n\|^2|\Delta N_n|\nonumber\\
 &+&2CK\Delta t|\Delta N_n|+2\langle g(X_n)\Delta W_n, h(X_n)\Delta N_n\rangle\nonumber\\
 &+&\dfrac{\left[2C\overline{\beta}\Delta t-2\beta\Delta t
 +2\overline{\beta}^2\Delta t^2\right] \|X_n\|^{a+1}}{1+\Delta t^{\alpha} \|f(X_n)\|} \nonumber.
 \label{ch5meantamed9}
 \end{eqnarray}
 Note that
 $$\Delta t<\dfrac{\beta-C\overline{\beta}}{\overline{\beta}^2}\Leftrightarrow 2C\overline{\beta}\Delta t-2\beta\Delta t+2\overline{\beta}^2\Delta t^2<0. $$
 Therefore 
 \eqref{ch5meantamed9} becomes 
 \begin{eqnarray}
 \|X_{n+1}\|^2&\leq&\|X_n\|^2+2K^2\Delta t^2\|X_n\|^2+\theta^2\|X_n\|^2\|\Delta W_n\|^2+C^2\|X_n\|^2|\Delta N_n|^2\nonumber\\
 &+&2\left\langle X_n+\dfrac{\Delta tf(X_n)}{1+\Delta t^{\alpha}\|f(X_n)\|}, g(X_n)\Delta W_n\right\rangle \nonumber\\
 &-&2\mu \|Y_n\|^2|\Delta N_n|+2CK\Delta t|\Delta N_n| +2\langle g(X_n)\Delta W_n, h(X_n)\Delta N_n\rangle
 \label{ch5meantamed10}
 \end{eqnarray}
 From the above discussion on $\Omega_n$ and $\Omega_n^c$, using the fact that  $\overline{\beta}(1+2C)-2\beta<0$  
 and  $\Delta t<\dfrac{\beta-C\overline{\beta}}{\overline{\beta}^2}$ that on $\Omega$, we have
 \begin{eqnarray}
 \|X_{n+1}\|^2&\leq&\|X_n\|^2+K\Delta t\|X_n\|^2+2K^2\Delta t^2\|X_n\|^2+\theta^2\|X_n\|^2\|\Delta W_n\|^2 \nonumber\\
 &+&C^2\|X_n\|^2|\Delta N_n|^2+2\left\langle X_n+\dfrac{\Delta tf(X_n)}{1+\Delta t^{\alpha}\|f(X_n)\|}, g(X_n)\Delta W_n\right\rangle\nonumber\\
 &-&2\mu\|X_n\|^2|\Delta N_n|+2CK\Delta t|\Delta N_n|+2\langle g(X_n)\Delta W_n, h(X_n)\Delta N_n\rangle.
 \label{ch5meantamed11}
 \end{eqnarray}
 
 Taking the expectation in both sides of \eqref{ch5meantamed11},  
 using independence of $\Delta W_n$ and $\Delta N_n$ together with the relation $\mathbb{E} \Delta W_n =0$, $\mathbb{E}\|\Delta W_n\|^2=\Delta t$, 
 $\mathbb{E}|\Delta N_n|=\lambda\Delta t$ and $\mathbb{E}|\Delta N_n|^2=\lambda^2\Delta t^2+\lambda\Delta t$ 
 leads to 
 \begin{eqnarray*}
 \mathbb{E}\|X_{n+1}\|^2&\leq& \mathbb{E}\|X_n\|^2+K\Delta t \mathbb{E}\|X_n\|^2+2K^2\Delta t^2\mathbb{E}\|X_n\|^2+\theta^2\Delta t\mathbb{E}\|X_n\|^2 \nonumber\\
 &+&\lambda^2C^2\Delta t^2\mathbb{E}\|X_n\|^2
 +\lambda C^2\Delta t\mathbb{E}\|X_n\|^2 \nonumber\\
 &-&2\mu\lambda\Delta t\mathbb{E}\|X_n\|^2
 +2\lambda CK\Delta t^2\mathbb{E}\|X_n\|^2 \nonumber\\
 &=&\left[1+(2K^2+\lambda^2C^2+2\lambda CK)\Delta t^2+(K+\theta^2+\lambda C^2-2\mu\lambda)\Delta t\right]\mathbb{E}\|X_n\|^2.
 \end{eqnarray*}
 Iterating the last inequality leads to 
 \begin{eqnarray*}
 \mathbb{E}\|X_n\|^2\leq \left[1+(2K^2+\lambda^2C^2+2\lambda CK)\Delta t^2+(K+\theta^2+\lambda C^2-2\mu\lambda)\Delta t\right]^n\mathbb{E}\|X_0\|^2.
 \end{eqnarray*}
 To have the stability of the NCTS scheme, we should also have
 \begin{eqnarray*}
 1+(2K^2+\lambda^2C^2+2\lambda CK)\Delta t^2+(K+\theta^2+\lambda C^2-2\mu\lambda)\Delta t<1.
 \end{eqnarray*}
 That is 
 \begin{eqnarray*}
 \Delta t<\dfrac{-[K+\theta^2+\lambda C^2-2\mu\lambda]}{2K^2+\lambda^2C^2+2\lambda CK},
 \end{eqnarray*}
 and there exists  a constant  $\gamma=\gamma (\Delta t) >0$ such that
 \begin{eqnarray*}
 \mathbb{E}\|X_n\|^2\leq \mathbb{E}\|X_0\|^2 e^{-\gamma \, t_n}, \,\, t_n= n\,\Delta t.
 \end{eqnarray*}
 As  in  the proof  of  \thmref{nt1}, we obviously have  $\underset{ \Delta t \rightarrow 0}{\lim}\gamma(\Delta t)=- (K +\theta^2+\lambda C^2-2\mu \lambda)=-\alpha_3$.
 \end{proof}
\section{Numerical simulations}
\label{simulations}
\subsection{Convergence}
 In this section, we present some numerical experiments to illustrate our theoretical strong convergence result.
  We consider the following stochastic differential equations
  {\small{
 \begin{eqnarray}
 \label{ex1}
 dX(t)=(-4X(t)-X^3(t))dt+X(t)dW(t)+X(t)dN_t,\\
 \label{ex2}
 dX(t)=(-4X(t)-X^3(t))dt+X(t)dW_1(t)+2X(t)dW_2(t)+X(t)dN_t,\\
 \label{ex3}
 dX(t)=(-4X(t)-X^3(t))dt+X(t)dW(t)+X(t)dN_1(t)-2X(t)dN_2(t),\\ 
 \label{ex4}
 dX(t)=(-4X(t)-X^3(t))dt+X(t)dW_1(t)+2X(t)dW_2(t),
+X(t)dN_1(t)-2X(t)dN_2(t).  
 \end{eqnarray}
 }}
 Note that $W$, $W_1$ and $W_2$ are independent Brownian motions and  $N$, $N_1$ and $N_2$ are independent Poisson  processes.
 Here $u(x)=-4x$. It is obvious to check that $u, v, g$ and $h$ satisfy \assref{ass1}. 
 Indeed  $ \langle x-y,f(x)-f(x) \rangle \leq  c (x-y)^2 $ for all  $c\geqslant 0$. 
 
 \figref{FIG01n} shows the strong errors for equation \eqref{ex1} with  different  values of  $\alpha$.
 As you can observe in Figure 1, all schemes have  strong convergence order 0.5, which confirm the theoretical result in \thmref{ch4theorem1}.
 
Remember that we have assumed scalar Poisson jump just for simplicity. 
In \figref{FIG02n},  the convergence of  our schemes with vector-valued jumps and vector-valued Brownian motions  is investigated.
 The errors graph corresponding to the equation \eqref{ex2} is given at 
 Figure \ref{FIG02an},  Figure \ref{FIG02bn} corresponds to the  equation \eqref{ex3},  while the errors graphs corresponding to the equation 
\eqref{ex4} are  given at Figure \ref{FIG02cn} and Figure \ref{FIG02dn}. In Figures \ref{FIG02an}, \ref{FIG02b} and \ref{FIG02an} the errors of the semi-tamed and the tamed Euler are identical.
All schemes have  strong convergence order 0.5, which also confirm the theoretical result in \thmref{ch4theorem1}.
\begin{figure}[h!!]
  \subfigure[]{
   \includegraphics[width=0.7\textwidth]{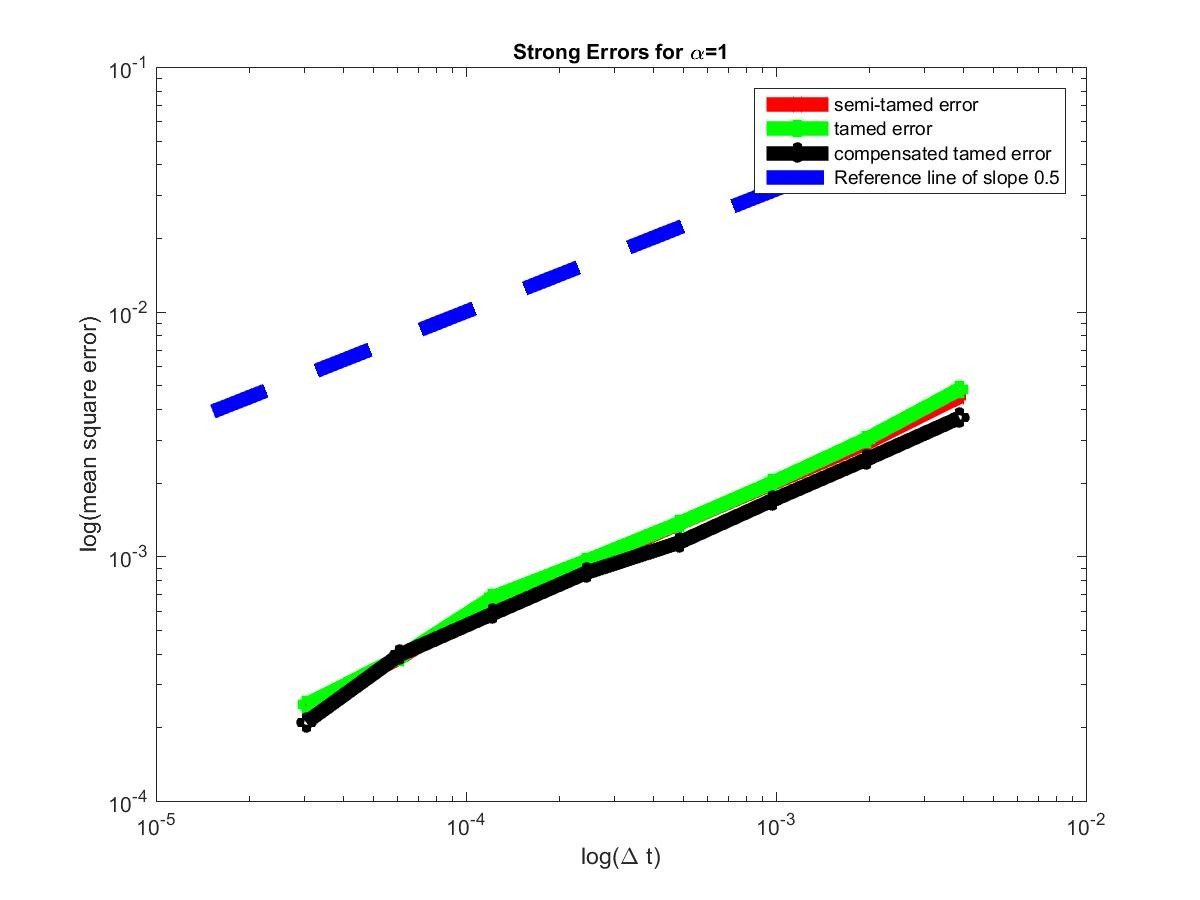}}
   \hskip 0.01\textwidth
   \subfigure[]{
   \includegraphics[width=0.7\textwidth]{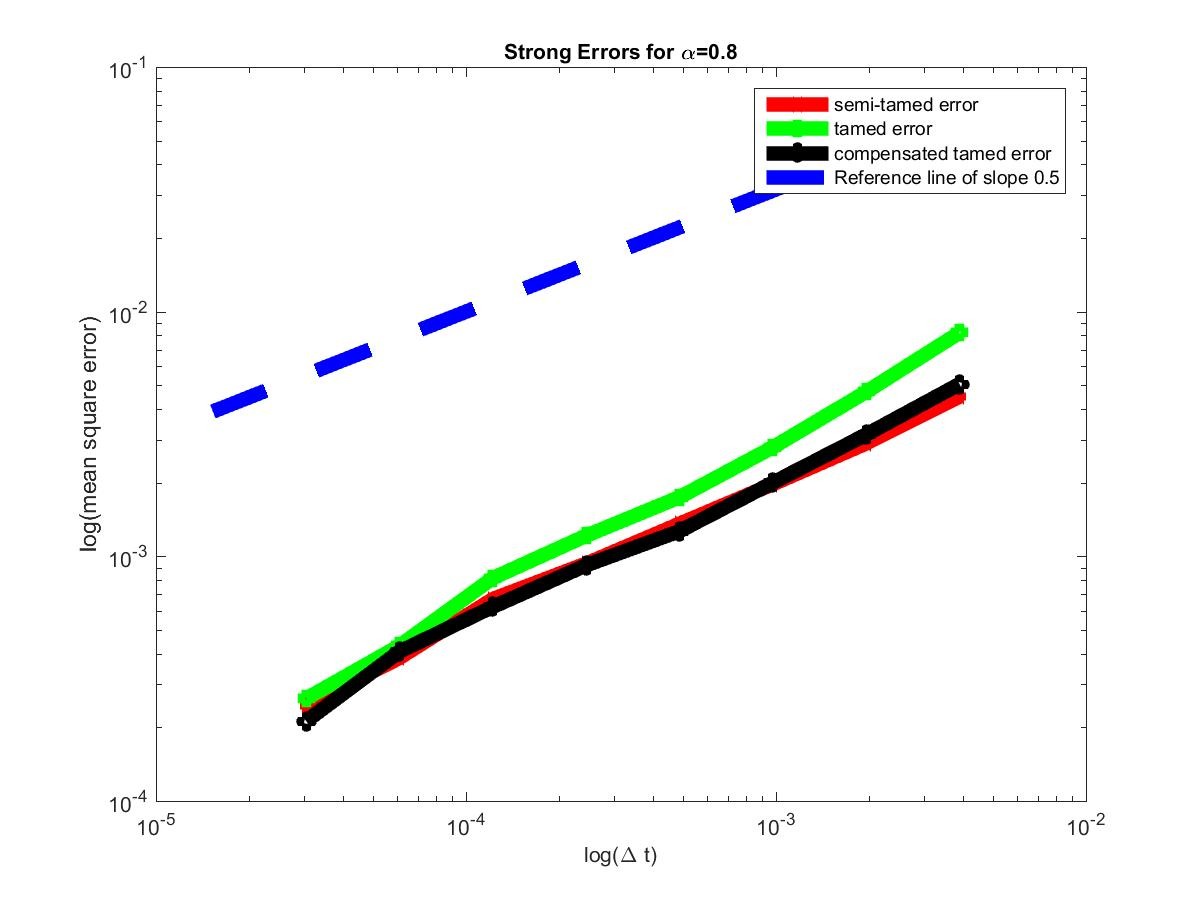}}
   \hskip 0.01\textwidth
\caption{ Strong convergence  of the compensated tamed scheme (CTS), the non compensated tamed scheme (NCTS) and the semi-tamed scheme (STS) 
for different values of $\alpha$ for SDEs \eqref{ex1}. For each value of $\alpha$ we use  $5000$ sample paths and the reference solutions are the numerical solutions with step size $\Delta t=2^{-16}$.
The initial solution is  $X_0=1$ and  the parameter of the scalar Poisson $\lambda=1$ and $T=1$.
Graph (a) corresponds to $\alpha=1$, graph (b) corresponds to $\alpha=0.8$.}
 \label{FIG01n}
\end{figure}

\begin{figure}[h!!]
\subfigure[]{
  \includegraphics[width=0.7\textwidth]{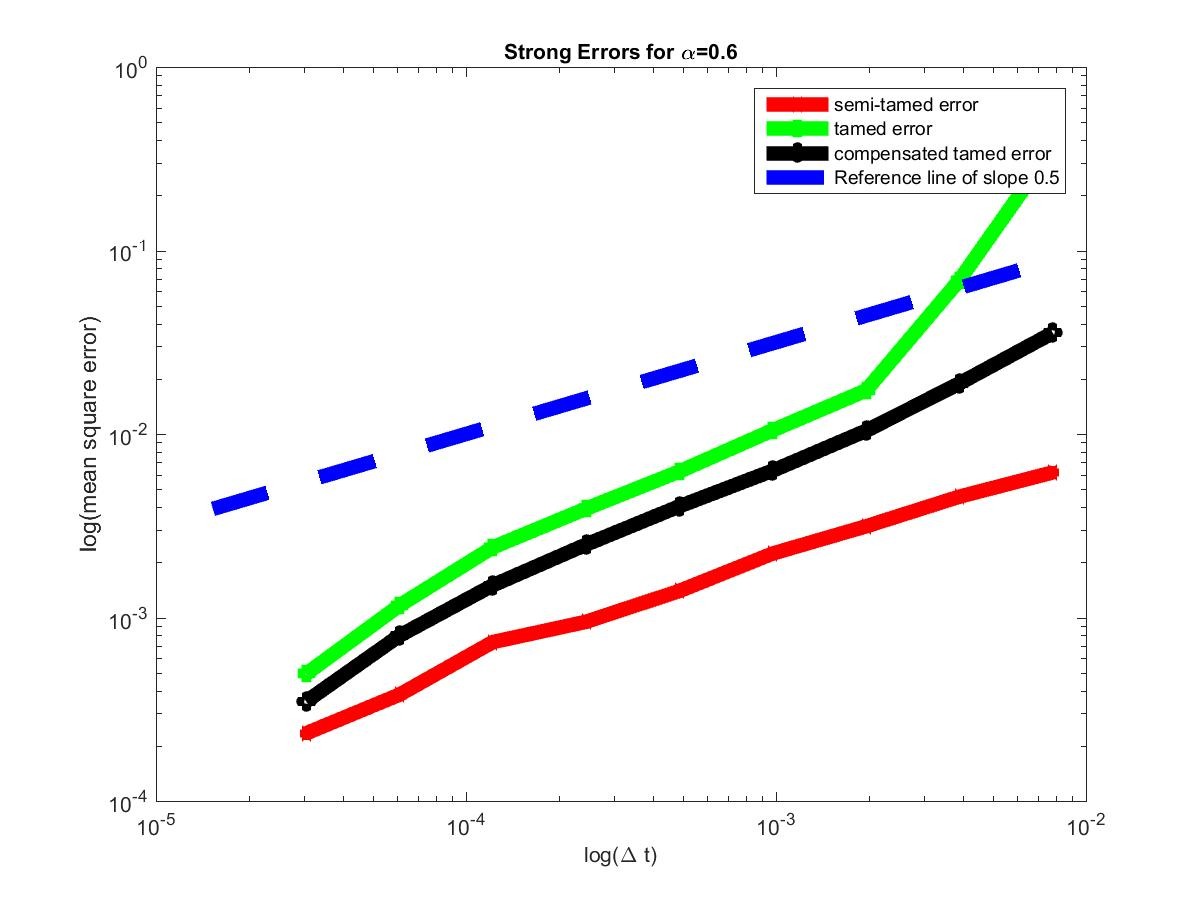}}
   \hskip 0.01\textwidth
\subfigure[]{
  \includegraphics[width=0.7\textwidth]{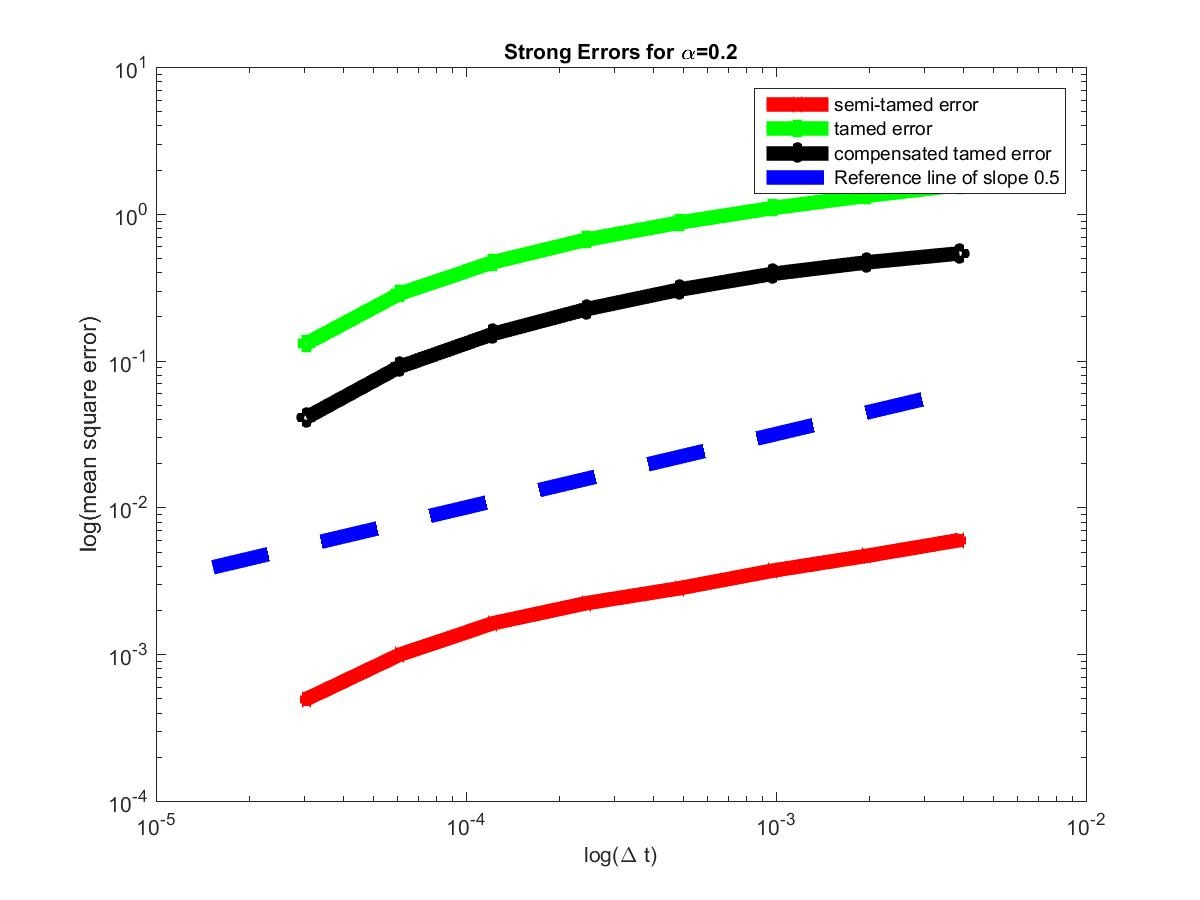}}
\caption{ Strong convergence  of the compensated tamed scheme (CTS), the non compensated tamed scheme (NCTS) and the semi-tamed scheme (STS) 
for different values of $\alpha$ for SDEs \eqref{ex1}. For each value of $\alpha$ we use  $5000$ sample paths and the reference solutions are the numerical solutions with step size $\Delta t=2^{-16}$.
The initial solution is  $X_0=1$ and  the parameter of the scalar Poisson $\lambda=1$ and $T=1$.
Graph (a) corresponds to $\alpha=1$, graph (b) corresponds to $\alpha=0.8$, graph (c) corresponds to $\alpha=0.6$ and  graph (d) corresponds to $\alpha=0.2$. }
 \label{FIG01n}
\end{figure}

 \begin{figure}[h!!]
  \subfigure[]{
\label{FIG02an}
   \includegraphics[width=0.7\textwidth]{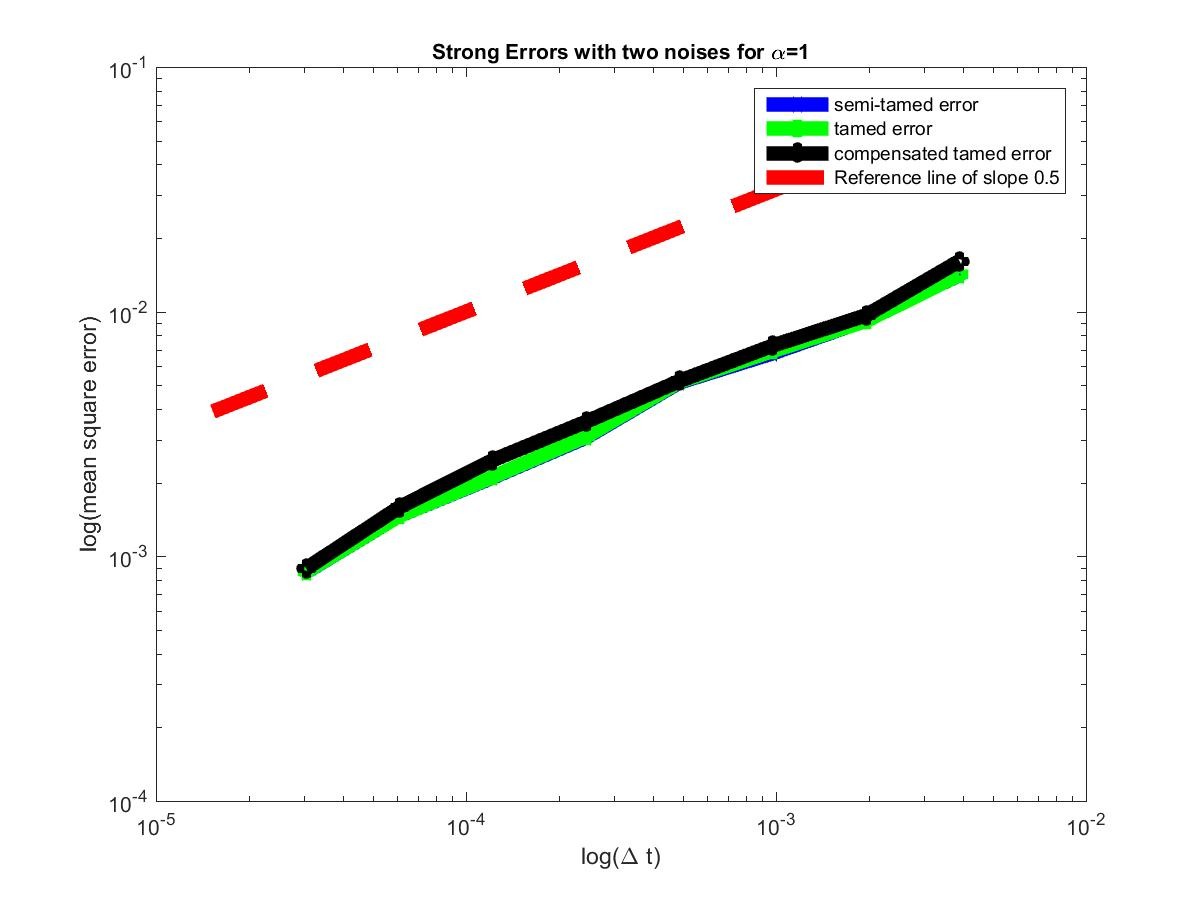}}
   \hskip 0.01\textwidth
   \subfigure[]{
   \label{FIG02bn}
   \includegraphics[width=0.7\textwidth]{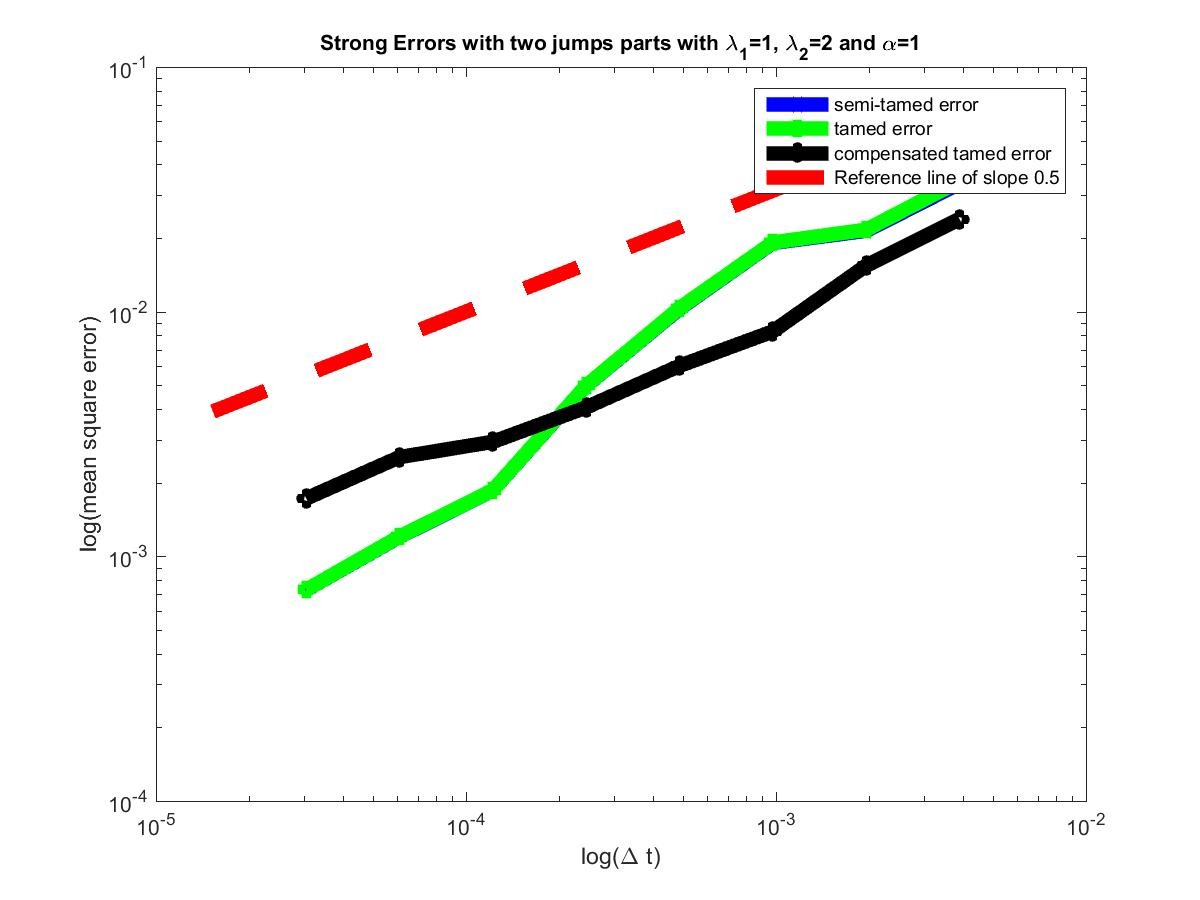}}
   \hskip 0.01\textwidth
\caption{ Strong convergence  of the compensated tamed scheme (CTS), the non compensated tamed scheme (NCTS) and the semi-tamed scheme (STS) 
for multiple noise terms and jumps terms. The initial solution is  $X_0=1$, $T=1$. We use $5000$ sample paths and the reference solutions are the numerical solutions with step size $\Delta t=2^{-16}$. 
Figure \ref{FIG02an} is for SDEs \eqref{ex2}, 
Figure \ref{FIG02bn} is for SDEs \eqref{ex3}.
Graphs (a)  corresponds to $\alpha=1$, $\lambda_1=1$ and $\lambda_2=2$, while  graph (b) corresponds to $\alpha=0.2$, $\lambda_1=1$ and $\lambda_2=2$.
}
 \label{FIG02n}
\end{figure}

\begin{figure}[h!!]
\subfigure[]{
   \label{FIG02cn}
  \includegraphics[width=0.7\textwidth]{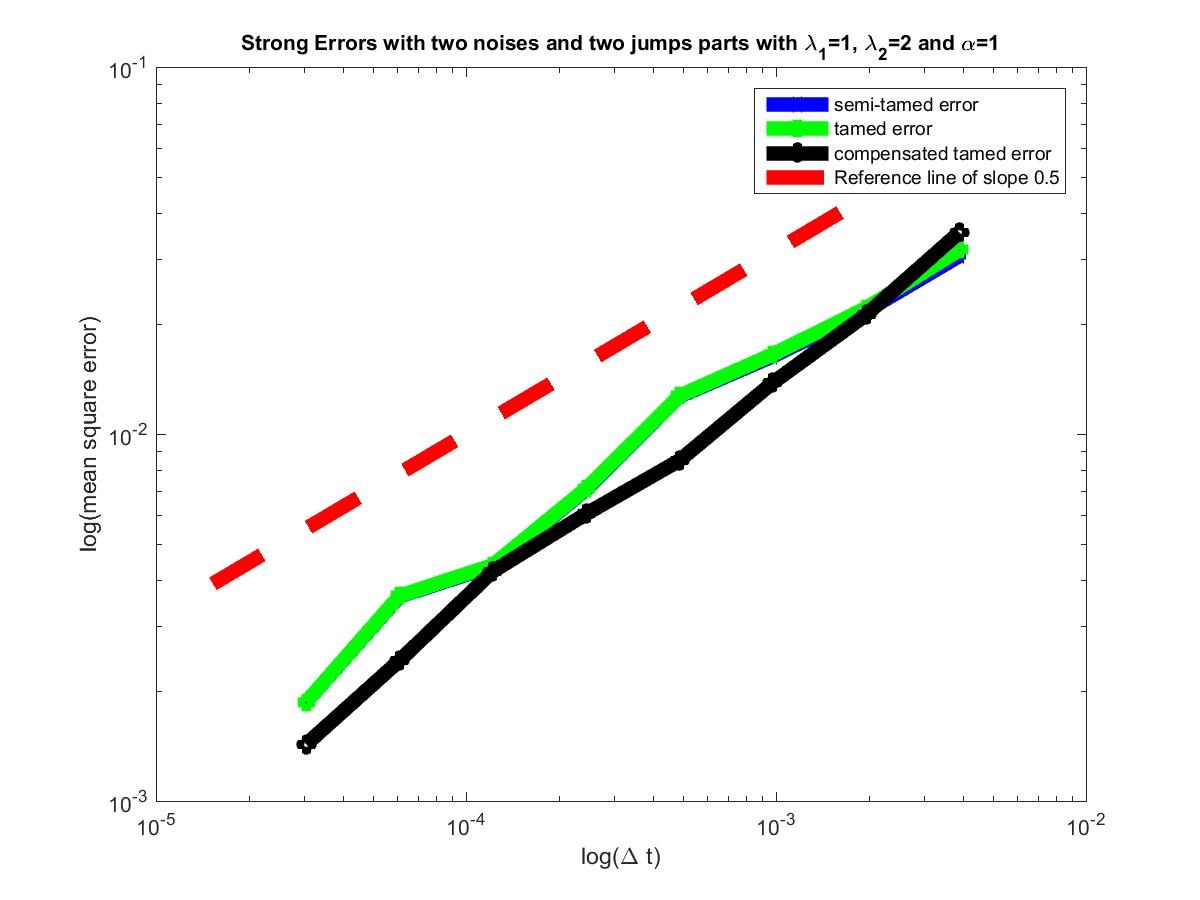}}
   \hskip 0.01\textwidth
\subfigure[]{
   \label{FIG02dn}
  \includegraphics[width=0.7\textwidth]{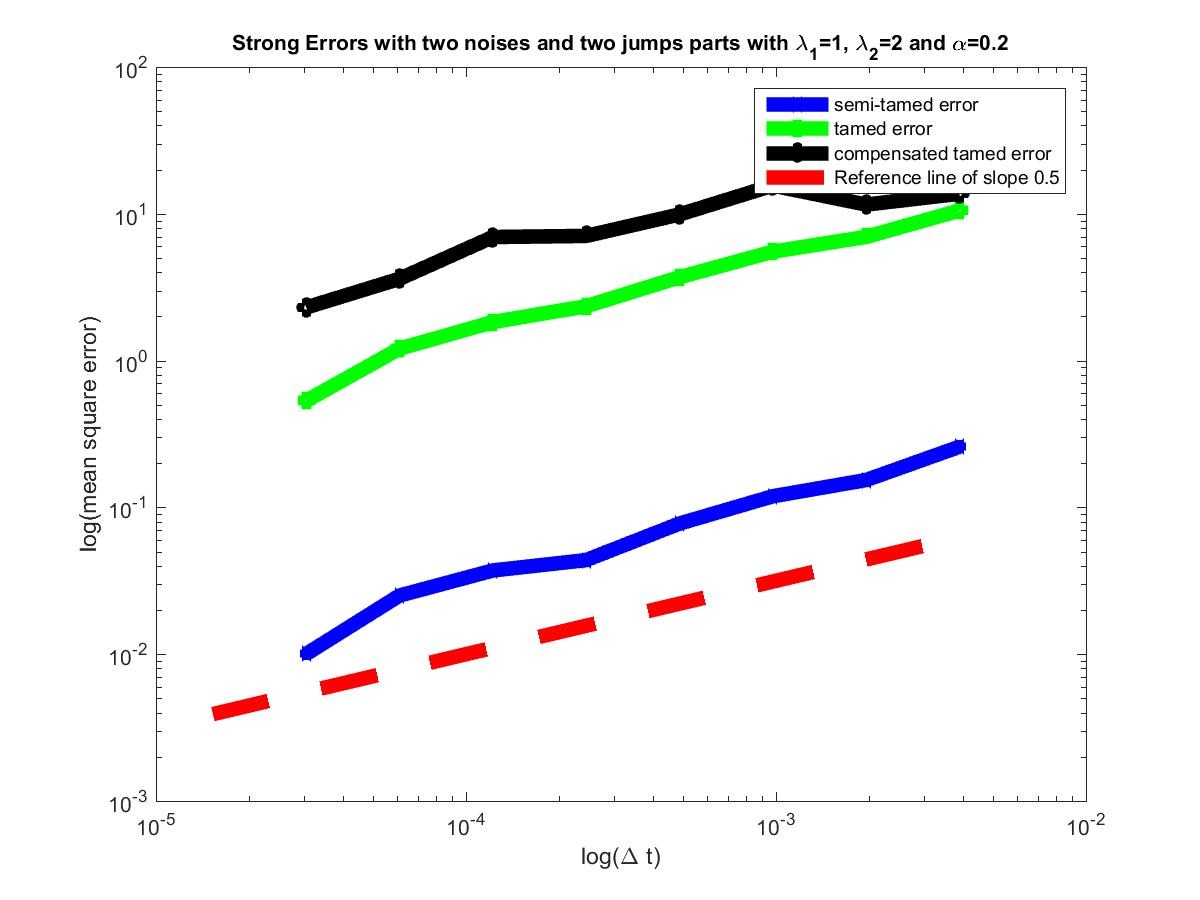}}
\caption{ Strong convergence  of the compensated tamed scheme (CTS), the non compensated tamed scheme (NCTS) and the semi-tamed scheme (STS) 
for multiple noise terms and jumps terms. The initial solution is  $X_0=1$, $T=1$. We use $5000$ sample paths and the reference solutions are the numerical solutions with step size $\Delta t=2^{-16}$. 
 Figures \ref{FIG02cn} and \ref{FIG02dn} are for SDEs \eqref{ex4}.
Graphs (a)  corresponds to $\alpha=1$, $\lambda_1=1$ and $\lambda_2=2$.
}
 \label{FIG02n}
\end{figure}
 
 \subsection{Linear stability}
 The goal of this section is to provide some practical examples to sustain our theoretical results in  the previous section.
 We compare  the stability behaviors of the tamed Euler and the compensated tamed Euler schemes with the one  of  semi-tamed Euler scheme. 
 We denote by $Y_n$ all the approximated solutions from  those schemes.
 Here we consider the following linear stochastic differential equation 
 \begin{eqnarray}
 \left\{\begin{array}{ll}
 dX(t)=aX(t)dt+bX(t)dW(t)+cX(t)dN(t),\\
 X(0)=1,
 \end{array}
 \right.
 \end{eqnarray}
 with the following two parameters 
 \begin{itemize}
 \item Case I. $a=-1$, $b=2$, $c=-0.9$ and $\lambda =9$.
 \item Case II. $a=2$, $b=2$, $c=-0.9$ and $\lambda=9$.
 \end{itemize}
 We can easily check that in both cases $l<0$, which ensure the linear mean-square stable of the exact solution in the two situations. We can also easily check from the theoretical result  that  the semi-tamed and the tamed Euler scheme reproduce the linear  mean-square property of the exact solution in the first case for all $\Delta t<0.048$ and in the second case for all $\Delta t<0.0245$. In \figref{FIG01}, we illustrate the mean-square stability of the tamed Euler, the compensated tamed Euler and the semi-tamed Euler schemes for different stepsizes. We observe from   \figref{FIG01} that the semi-tamed  scheme works better than the tamed and compensated tamed schemes. We also observe that when $\alpha$ approaches $1$, the tamed and the compensated schemes are more stable ( see Figure \ref{FIG04} ).
 
\subsection{Nonlinear stability}
For nonlinear stability,  we consider the following nonlinear stochastic differential equation
\begin{eqnarray}
\label{linearexple}
\left\{\begin{array}{ll}
dX(t)=\left(-2X(t)-X(t)^3 \right)dt+\sqrt{2}X(t)dW(t)-\dfrac{1}{4}X(t)dN(t),\\
 X(0)=1.
 \end{array}
 \right.
\end{eqnarray}
The Poisson process  intensity is $\lambda=1$, $f(x)=-2x-x^3, \; g(x)=\sqrt{2}x$, $h(x)=-\dfrac{1}{4}x$ and $T=2$.
We take  $u(x)=-2x$ and $v(x)=-x^3$. Indeed, we obviously have 
\begin{eqnarray*}
\langle x-y, f(x)-f(y)\rangle \leq -2(x-y)^2\\
|g(x)-g(y)|^2\leq 2(x-y)^2, \,\;\;
|h(x)-h(y)|^2\leq \dfrac{1}{16}(x-y)^2.
\end{eqnarray*}
Then  $\mu=-2$, $\sigma=2$, $\gamma=\dfrac{1}{16}$ and  $\alpha=2\mu+\sigma+\lambda\sqrt{\gamma}(\sqrt{\gamma}+2)=-\dfrac{23}{16}<0$. 
It follows  that the exact solution is exponentially mean-square stable. One can easily check from  theoretical 
results that for $\Delta t<0.22$, the semi-tamed  Euler schemes reproduces the  exponential mean-square stability property of the exact solution. 
 \figref{FIG03} illustrates the stability of the tamed  scheme, compensated tamed  scheme and the semi-tamed scheme for different step-sizes.
 We take $\Delta t =1/6$, $\Delta t= 1/12$ and $ \Delta t=1/24$ and  generate $7\times 10^3$ samples for each numerical method. We  observe that the semi-tamed scheme works
 better than the tamed and the compensated tamed Euler schemes. We observe also that when $\alpha$ approaches $1$ the tamed and compensated tamed Euler scheme are more stable.
 
\begin{figure}
\begin{center}
  \subfigure[]{
\label{FIG01a}
   \includegraphics[width=0.48\textwidth]{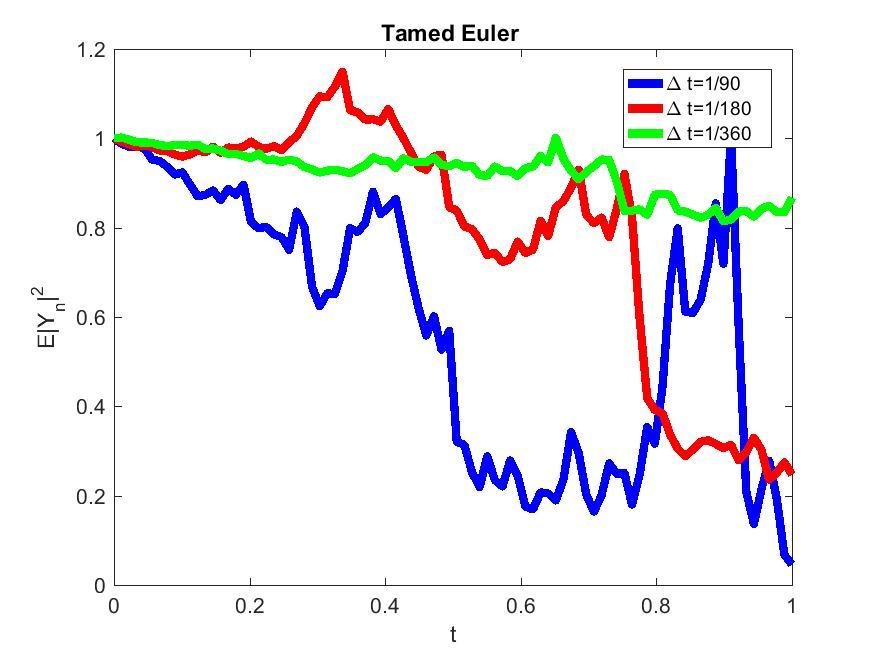}}
   \hskip 0.01\textwidth
   \subfigure[]{
   \label{FIG01b}
   \includegraphics[width=0.48\textwidth]{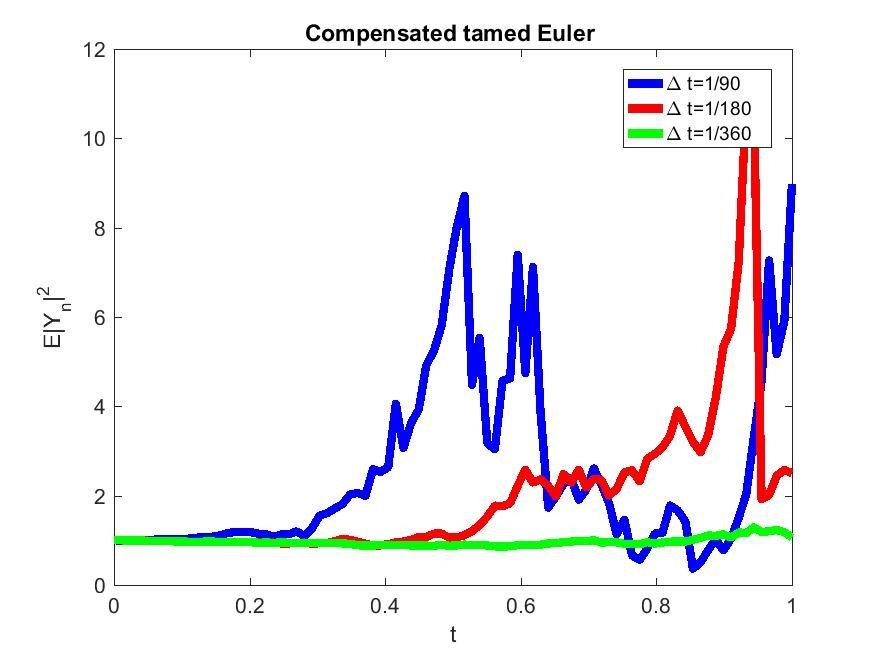}}
   \hskip 0.01\textwidth
\subfigure[]{
   \label{FIG01c}
   \includegraphics[width=0.66\textwidth]{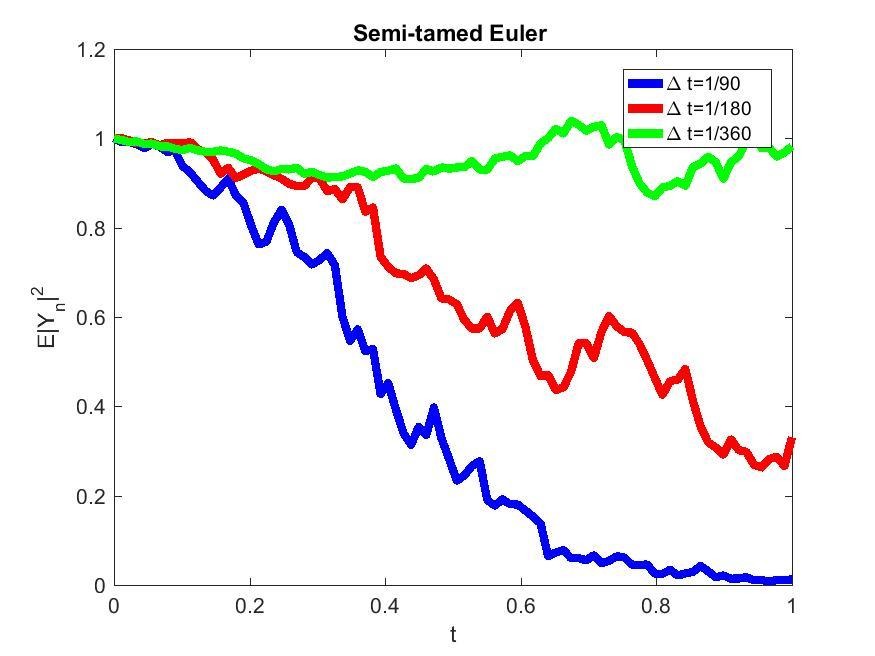}}
\caption{ Linear stability of  with $\alpha=1$ with different stepsizes  for SDE \eqref{linearexple} with  Case II (a) Tamed Euler scheme,  (b) Compensated tamed Euler scheme (c) Semi-tamed Euler scheme. This reveals that the semi-tamed Euler scheme works better than the tamed Euler and the compensated tamed Euler schemes }
 \label{FIG01}
 \end{center}
 \end{figure}
 \clearpage
\begin{figure}
 \begin{center}
  \subfigure[]{
\label{FIG02a}
   \includegraphics[width=0.48\textwidth]{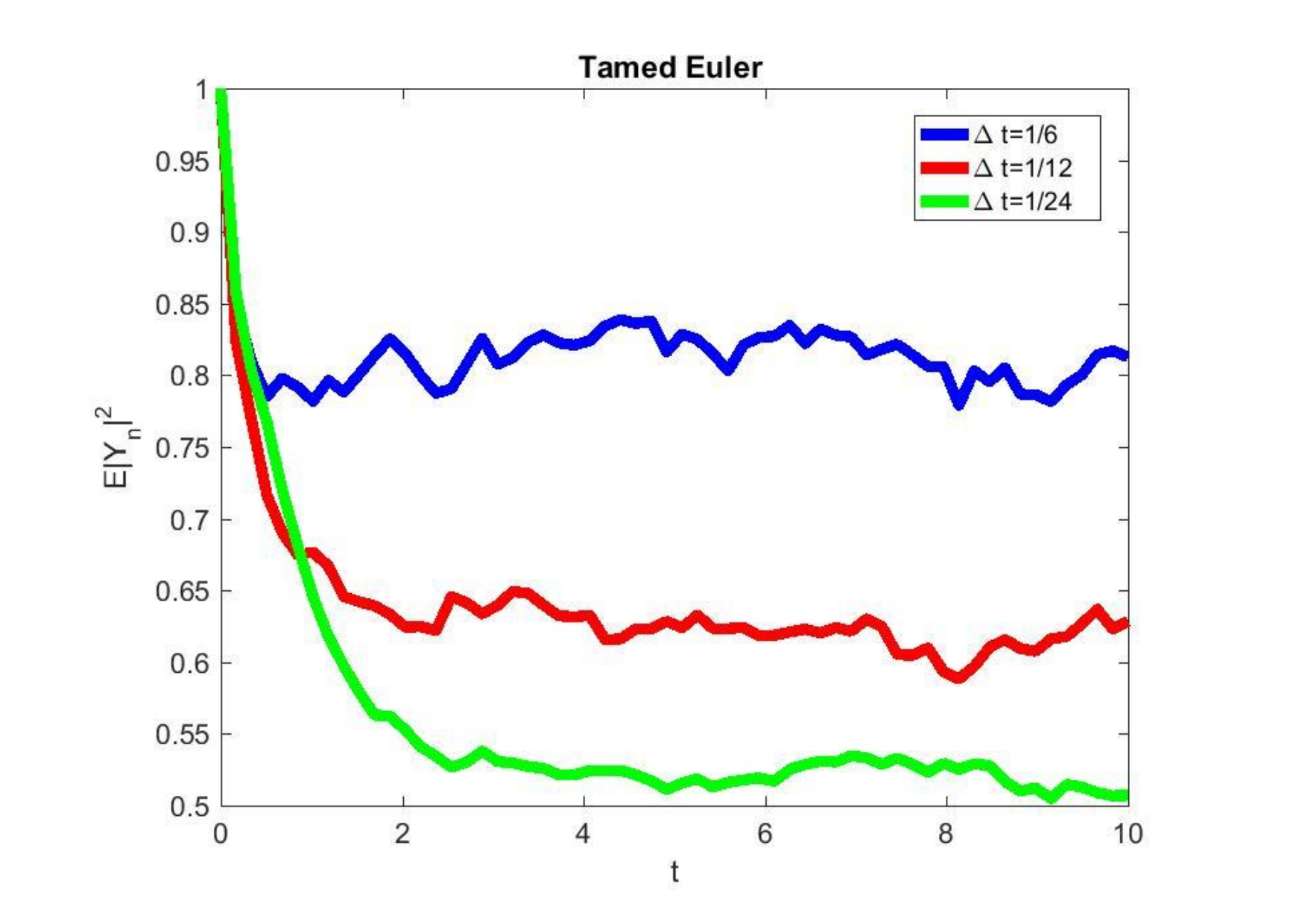}}
   \hskip 0.01\textwidth
   \subfigure[]{
   \label{FIG02b}
   \includegraphics[width=0.48\textwidth]{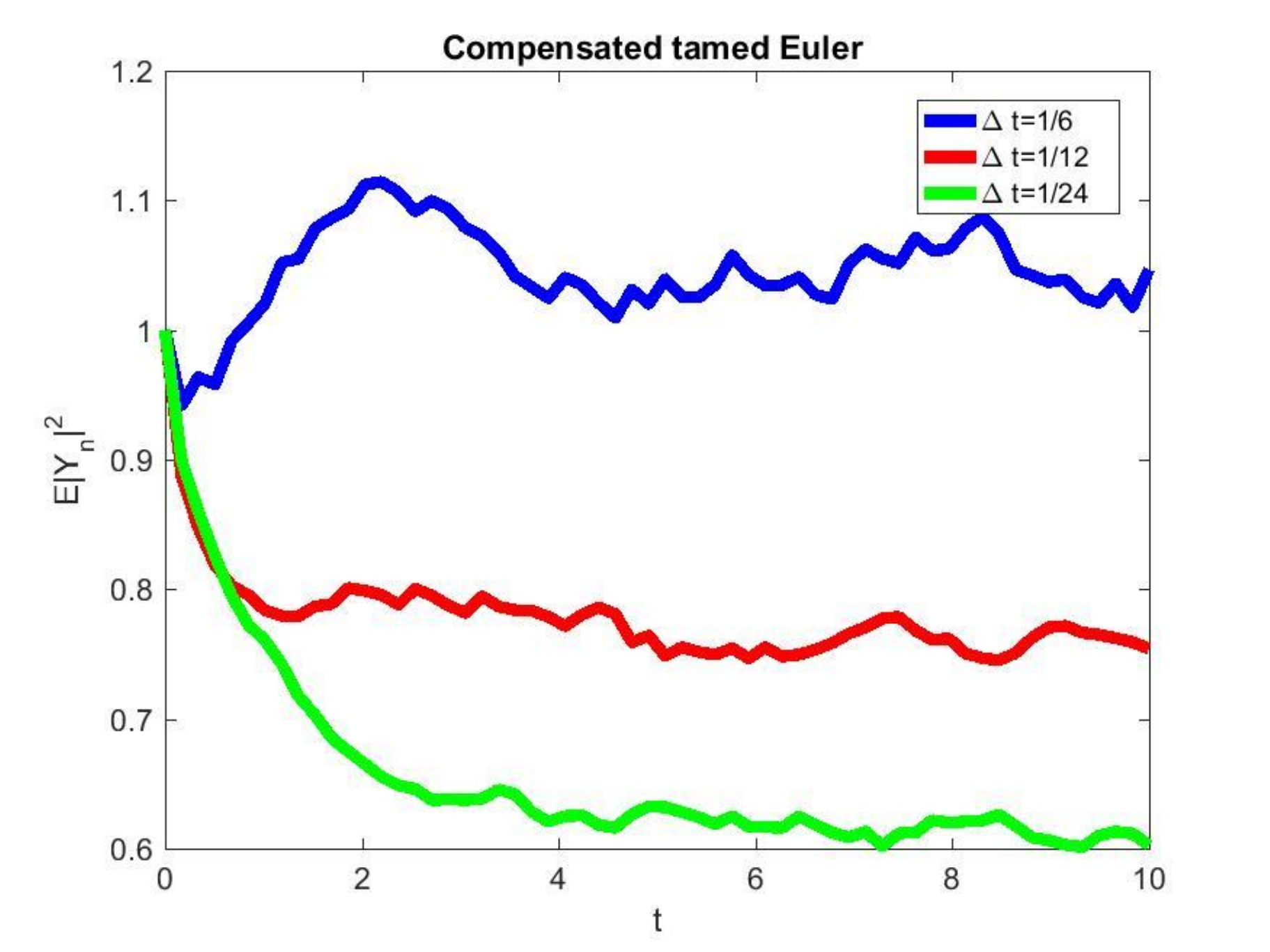}}
   \hskip 0.01\textwidth
\subfigure[]{
   \label{FIG02c}
   \includegraphics[width=0.66\textwidth]{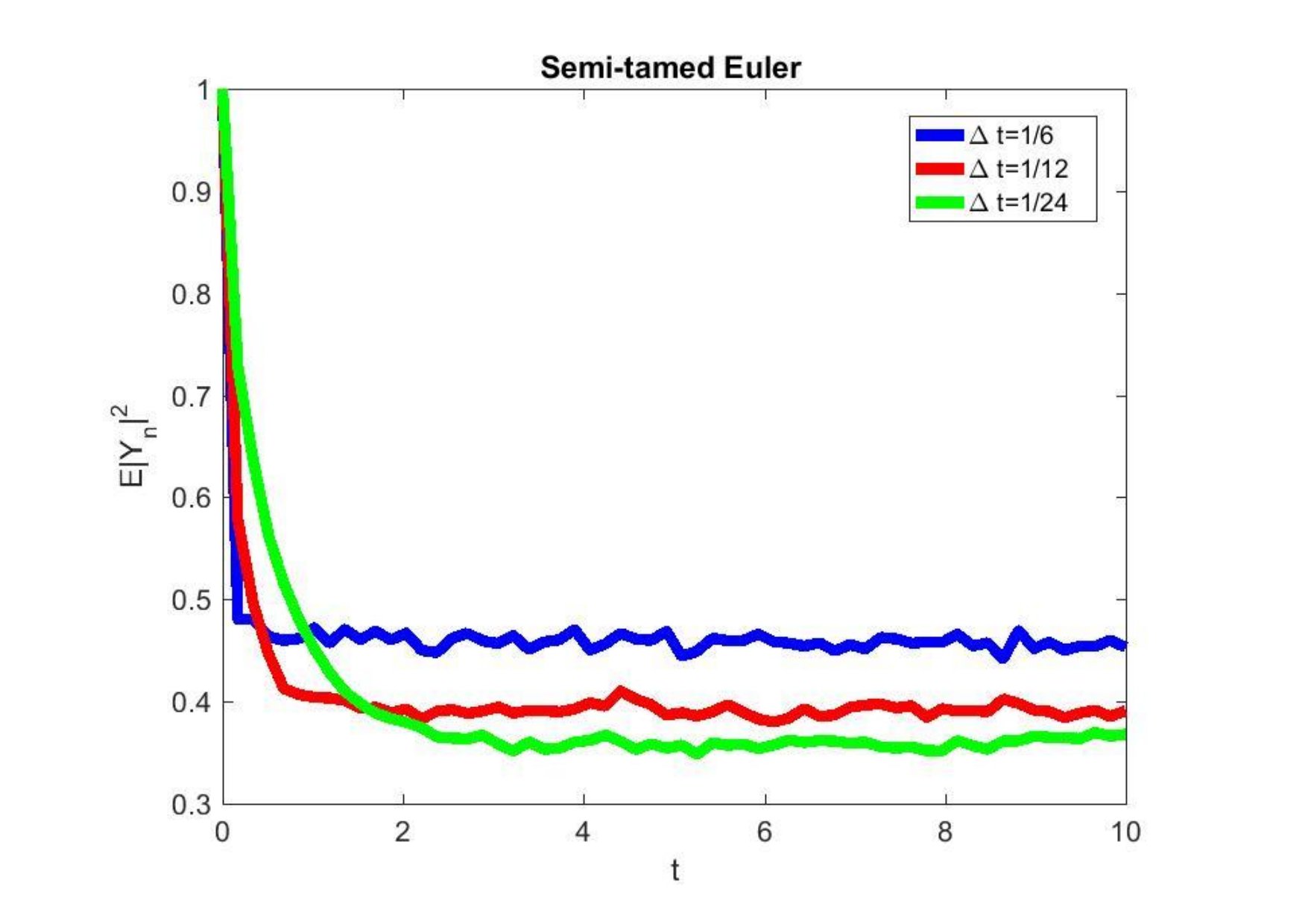}}
\caption{ Nonlinear stability with $\alpha=0.5$ with different stepsizes, (a) Tamed Euler scheme, (b) Semi-tamed Euler scheme, (c) Compensated tamed Euler scheme (d) for  $7\times 10^3$ samples of each numerical scheme. This illustrate that semi-taned Euler scheme works better than
the tamed and the compensated tamed Euler schemes.}
 \label{FIG02}
 \end{center}
\end{figure}
%
\begin{figure}
 \begin{center}
  \subfigure[]{
\label{FIG03a}
   \includegraphics[width=0.48\textwidth]{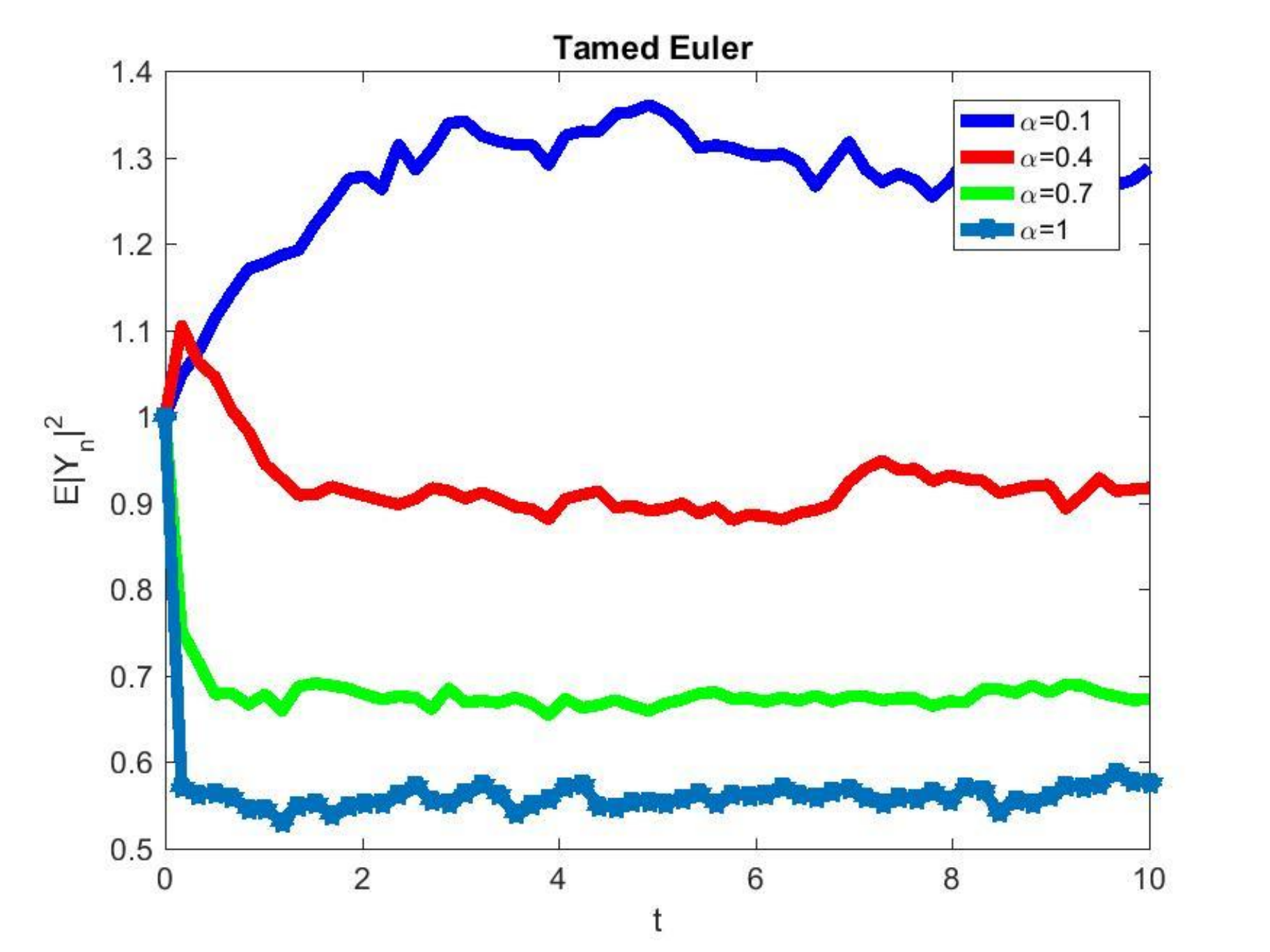}}
   \hskip 0.01\textwidth
   \subfigure[]{
   \label{FIG03b}
   \includegraphics[width=0.48\textwidth]{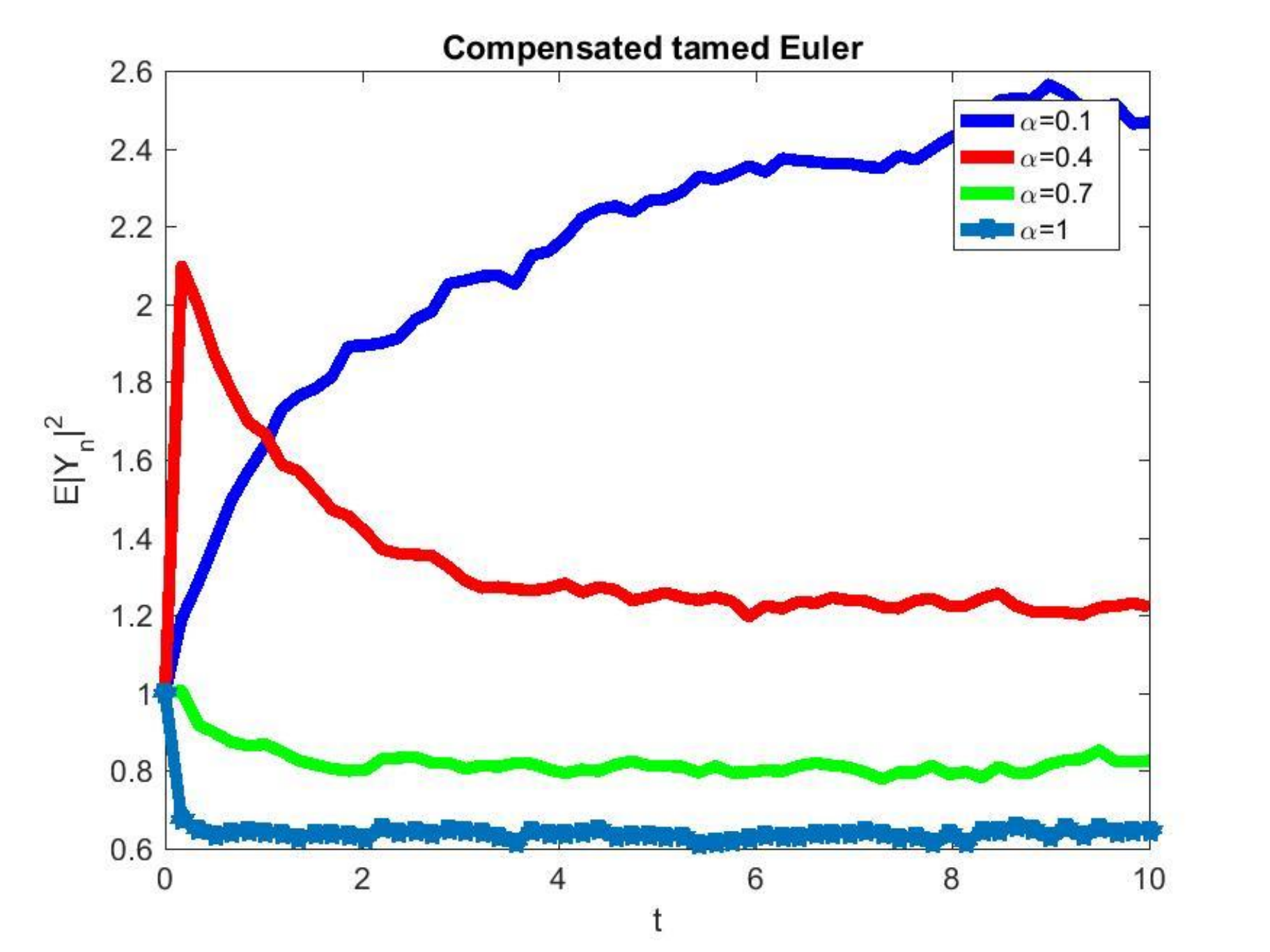}}
   \hskip 0.01\textwidth
\subfigure[]{
   \label{FIG03c}
   \includegraphics[width=0.66\textwidth]{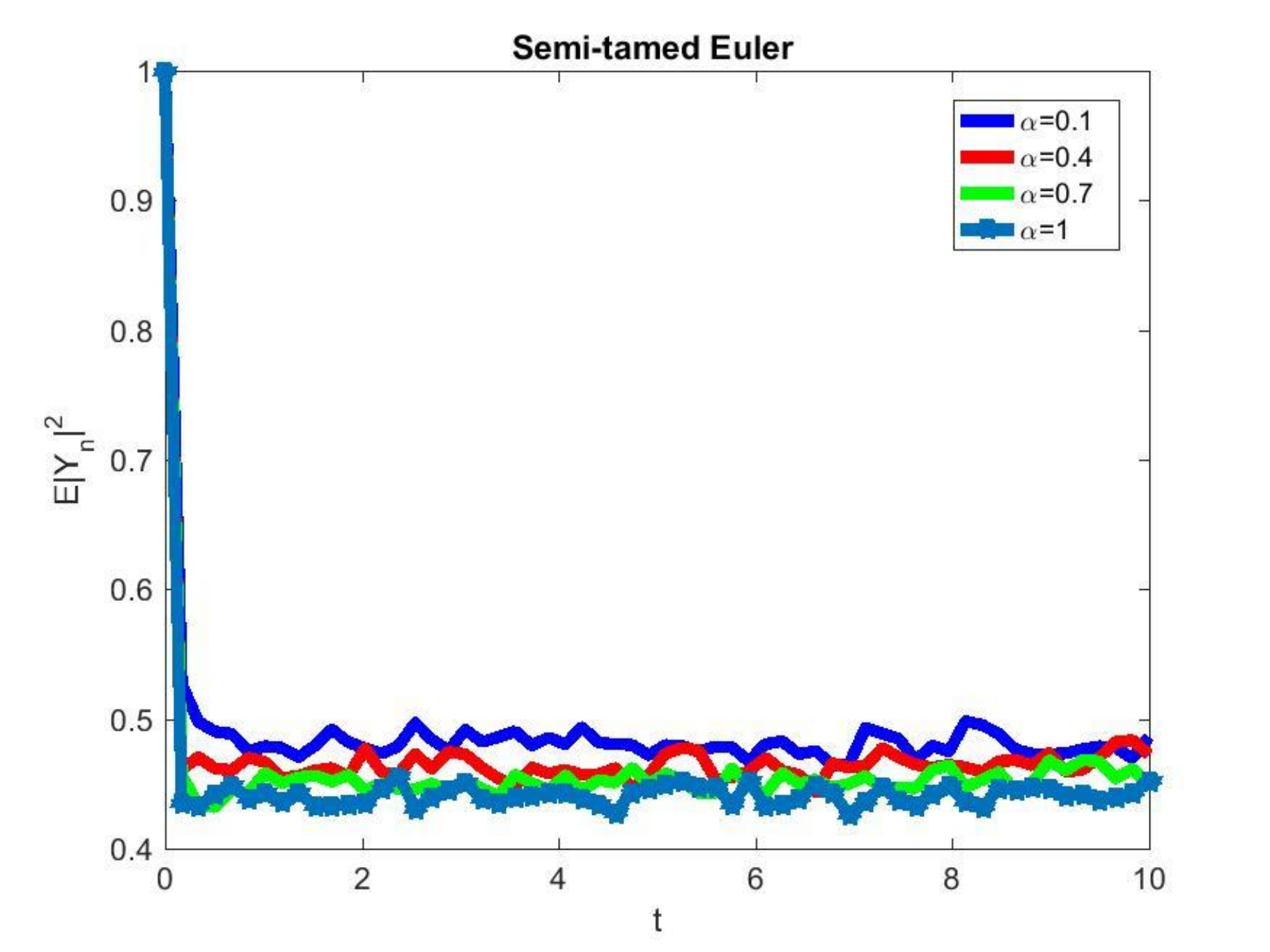}}
\caption{ Nonlinear stability with different values of $\alpha$ for with $\Delta t=1/6$ with $7\times 10^3$ samples paths. (a) Tamed Euler scheme, (b) Compensated tamed Euler scheme, (c) Semi-tamed Euler scheme (d). This reveals that when $\alpha$ approaches $1$ the tamed Euler and the compensated tamed Euler schemes are more stable and behave like the semi-tamed Euler scheme.}
 \label{FIG03}
 \end{center}
\end{figure}
\clearpage

\begin{figure}
 \begin{center}
  \subfigure[]{
\label{FIG04a}
   \includegraphics[width=0.7\textwidth]{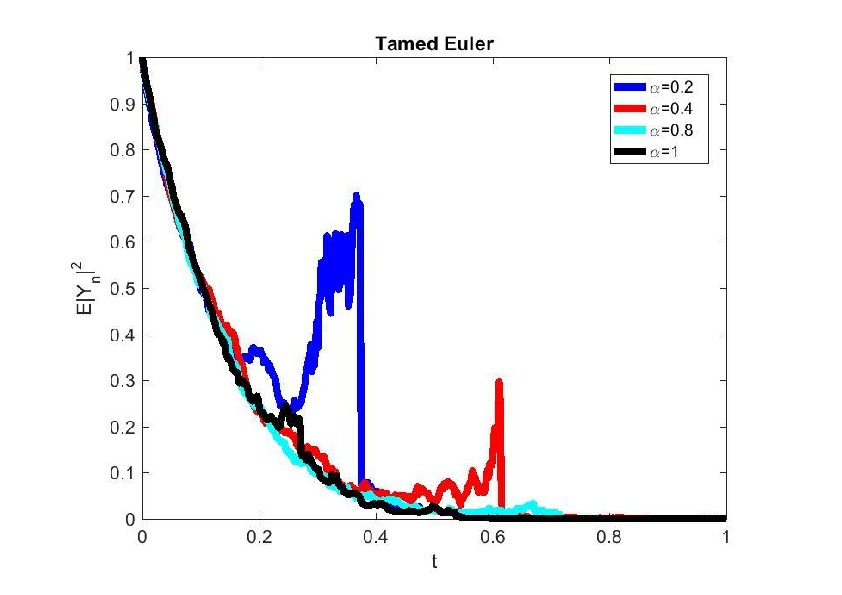}}
   \hskip 0.01\textwidth
   \subfigure[]{
   \label{FIG04b}
   \includegraphics[width=0.7\textwidth]{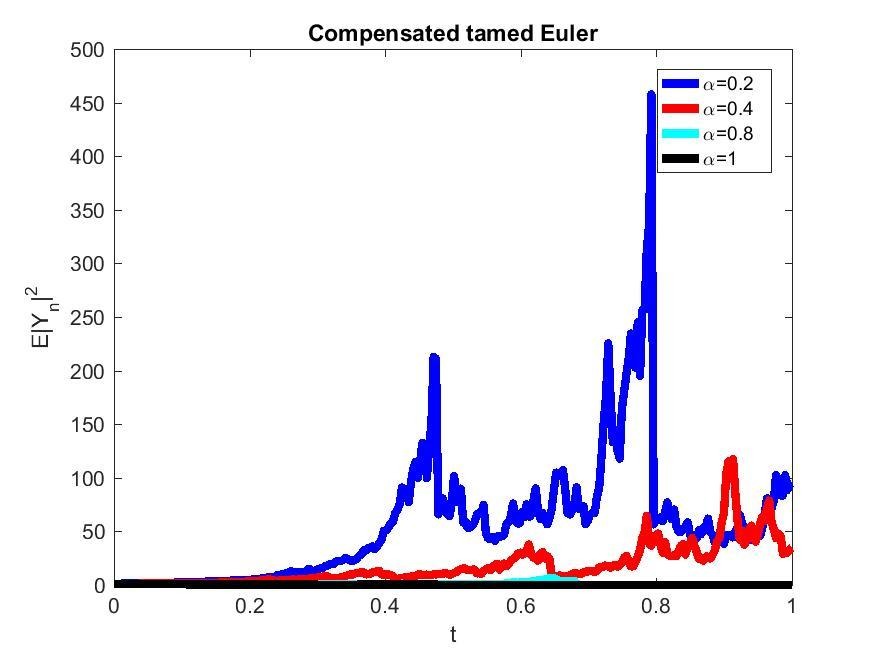}}
\caption{ Linear stability with different values of $\alpha$ for SDE \eqref{linearexple} with  Case I with $7\times 10^3$ samples. (a) Tamed Euler scheme with $\Delta t=0.0033$ (b) Compensated tamed Euler scheme with $\Delta t=0.002$. This illustrate that the Tamed and the compensated tamed Euler schemes have good stability behaviors when $\alpha$ approaches $1$.}
 \label{FIG04}
 \end{center}
\end{figure}
\clearpage

\section*{Acknowledgements}
This project was supported by the Robert Bosch
Stiftung through the AIMS ARETE chair programme.

\end{document}